\documentclass[11pt, a4paper]{amsart}
\usepackage{fullpage}
\usepackage{amsmath}
\usepackage{amssymb}
\usepackage{amsfonts}
\usepackage{mathrsfs}
\usepackage{amscd}
\usepackage{graphicx}
\usepackage[shortlabels]{enumitem}
\usepackage{mathtools}
\usepackage{tikz-cd}
\usepackage{hyperref}
\hypersetup{
    colorlinks=true,
    linkcolor = blue,
    citecolor= blue,
    urlcolor=cyan,
}
\usepackage{pstricks-add}
\usepackage{pgf,tikz}
\usepackage{subcaption}
\usepackage{bm}

\usepackage{tkz-tab}
\usepackage{xpatch}
\xpatchcmd{\tkzTabLine}{$0$}{$\bullet$}{}{}
\tikzset{t style/.style={style=solid}}

\usetikzlibrary{arrows}
\usetikzlibrary{calc}
\usepackage{amsthm}
\usepackage{aliascnt}
\usepackage[capitalise]{cleveref}

\newtheorem{theorem}{Theorem}[section]
\crefname{theorem}{theorem}{theorems}
\Crefname{theorem}{Theorem}{Theorems}

\makeatletter
\newtheoremstyle{onlyname}
  {6pt}
  {6pt}
  {\itshape}
  {}
  {\bfseries}
  {.}
  {0.5em}
  {\thmnote{#3}}
\makeatother

\theoremstyle{onlyname}
\newaliascnt{namedtheoreminner}{theorem}
\newtheorem{namedtheoreminner}[namedtheoreminner]{}
\aliascntresetthe{namedtheoreminner}
\crefname{namedtheoreminner}{theorem}{theorems}
\Crefname{namedtheoreminner}{Theorem}{Theorems}

\theoremstyle{plain}

\newaliascnt{proposition}{theorem}
\newtheorem{proposition}[proposition]{Proposition}
\aliascntresetthe{proposition}
\crefname{proposition}{proposition}{propositions}
\Crefname{proposition}{Proposition}{Propositions}

\newaliascnt{lemma}{theorem}
\newtheorem{lemma}[lemma]{Lemma}
\aliascntresetthe{lemma}
\crefname{lemma}{lemma}{lemmas}
\Crefname{lemma}{Lemma}{Lemmas}

\newaliascnt{corollary}{theorem}
\newtheorem{corollary}[corollary]{Corollary}
\aliascntresetthe{corollary}
\crefname{corollary}{corollary}{corollaries}
\Crefname{corollary}{Corollary}{Corollaries}

\newaliascnt{conjecture}{theorem}

\aliascntresetthe{conjecture}
\crefname{conjecture}{conjecture}{conjectures}
\Crefname{conjecture}{Conjecture}{Conjectures}

\newaliascnt{claim}{theorem}

\aliascntresetthe{claim}
\crefname{claim}{claim}{claims}
\Crefname{claim}{Claim}{Claims}

\newaliascnt{question}{theorem}

\aliascntresetthe{question}
\crefname{question}{question}{questions}
\Crefname{question}{Question}{Questions}

\theoremstyle{definition}

\newaliascnt{procedure}{theorem}

\aliascntresetthe{procedure}
\crefname{procedure}{procedure}{procedures}
\Crefname{procedure}{Procedure}{Procedures}

\theoremstyle{definition}

\newaliascnt{algorithm}{theorem}

\aliascntresetthe{algorithm}
\crefname{algorithm}{algorithm}{algorithms}
\Crefname{algorithm}{Algorithm}{Algorithms}

\newaliascnt{example}{theorem}
\newtheorem{example}[example]{Example}
\aliascntresetthe{example}
\crefname{example}{example}{examples}
\Crefname{example}{Example}{Examples}

\newaliascnt{definition}{theorem}

\aliascntresetthe{definition}
\crefname{definition}{definition}{definitions}
\Crefname{definition}{Definition}{Definitions}

\newaliascnt{remark}{theorem}
\newtheorem{remark}[remark]{Remark}
\aliascntresetthe{remark}
\crefname{remark}{remark}{remarks}
\Crefname{remark}{Remark}{Remarks}

\newaliascnt{fact}{theorem}

\aliascntresetthe{fact}
\crefname{fact}{fact}{facts}
\Crefname{fact}{Fact}{Facts}

\crefformat{section}{\S#2#1#3} 
\crefformat{subsection}{\S#2#1#3}
\crefformat{subsubsection}{\S#2#1#3}
\crefname{figure}{{\sc Figure}}{{\sc Figure}}
\crefname{equation}{}{}

\usetikzlibrary{shapes, arrows, calc, arrows.meta, fit, positioning, quotes} 
\tikzset{  
    state/.style ={ellipse, draw, minimum width = 0.9 cm}, 
    point/.style = {circle, draw, inner sep=0.18cm, fill, node contents={}},  
    bidirected/.style={Latex-Latex,dashed}, 
    el/.style = {inner sep=2.5pt, align=right, sloped}  
}  

\colorlet{ColorGray}{gray!10}

\newcommand{\N}{\mathbb{N}}

\newcommand{\Q}{\mathbb{Q}}
\newcommand{\R}{\mathbb{R}}

\newcommand{\cD}{\mathcal{D}}

\newcommand{\cJ}{\mathcal{J}}

\newcommand{\cO}{\mathcal{O}}

\newcommand{\cQ}{\mathcal{Q}}
\newcommand{\cR}{\mathcal{R}}
\newcommand{\cS}{\mathcal{S}}

\newcommand{\tQ}{\widetilde{Q}}

\newcommand{\tS}{\widetilde{S}}

\newcommand{\teta}{\widetilde{\teta}}

\newcommand{\e}{\varepsilon}

\renewcommand{\=}{ \coloneqq }

\DeclareMathOperator{\Dist}{Dist} 

\DeclareMathOperator{\interior}{int}

\DeclareMathOperator{\Cr}{Cr}

\newcommand{\bfw}{\mathbf{w}}
\newcommand{\bfu}{\mathbf{u}}

\newcommand*{\mathinhead}[2]{\texorpdfstring{\boldmath$#1$}{#2}}
\tikzset{
  declare function={
    sgn(\x) = (and(\x<0, 1) * -1) +
    (and(\x>0, 1) * 1) +
    (and(\x==0, 1) * 0);
  }
}

\title[Long-branched Sturmian unimodal maps]
{Absolutely continuous invariant measures and the Collet--Eckmann condition for long-branched Sturmian unimodal maps}

\author{Jorge Olivares-Vinales} \thanks{} 
\address{Shanghai Center for Mathematical Sciences, Jiangwan Campus, Fudan University, No 2005 Songhu Road, Shanghai, China 200438 }
\email{jolivaresv@fudan.edu.cn}

\subjclass[2020]{37E05, 37A05, 37D25, 37B10}
\keywords{unimodal maps, long-branched maps, Sturmian dynamics, kneading theory, absolutely continuous invariant measures, Collet--Eckmann condition}

\begin{document}

\begin{abstract}
We study long-branched unimodal maps for which the sequence of values of the kneading map form a Sturmian word. We identify these kneading classes with those arising from the stunted Lorenz construction of Anu\v{s}i\'c, Bruin, and \v{C}in\v{c}, and their quadratic representatives with the maps constructed by Bl\'e from rigid rotations. We compute the cutting and co-cutting times explicitly and establish criteria for the existence and nonexistence of absolutely continuous invariant probability measures. We also show that, no $S$-unimodal realization, with finite critical order, satisfies the Collet--Eckmann condition.
\end{abstract}

\maketitle


\section{Introduction}
\label{sec:introduction}

Unimodal maps form one of the simplest classes of noninvertible dynamical systems in which expansion, recurrence, and criticality interact. Despite their elementary definition, they exhibit a wide range of nonlinear phenomena, and the quadratic family has played a central role in the development of one-dimensional dynamics. The kneading theory of Milnor and Thurston \cite{Milnor_Thurston1986_On_iterated_maps_of_the_interval} and the Markov extension introduced by Hofbauer \cite{Hofbauer-1980_The_top_entrpy_of_axx-1} provide fundamental tools for organizing this dynamics through the itinerary of the turning point and for relating its combinatorics to metric and statistical properties.

An important class in this theory is formed by the \emph{long-branched} unimodal maps introduced and studied by Bruin \cite{Bruin1994_top_condition_for_the_existence_of_acip,Bruin1995_induced_maps_markov_partitions}. These are characterized by bounded kneading map, or equivalently by a uniform positive lower bound on the lengths of the domains of the Hofbauer tower. Long-branched maps lie at the opposite combinatorial extreme from the extensively studied regime $Q(k)\to+\infty$, which includes Fibonacci and Fibonacci-like maps. Bruin constructed a natural Markov inducing scheme for the long-branched class and showed that finiteness of the lifted invariant measure is governed by the integrability of the inducing time. Thus bounded kneading maps retain a strong Markov structure while allowing nontrivial recurrence and statistical behavior.

Within the long-branched regime, irrational rotations give rise naturally to a family with Sturmian combinatorics. Anu\v{s}i\'c, Bruin, and \v{C}in\v{c} introduced these classes through stunted Lorenz circle maps, where the dynamics is organized by an irrational outside rotation number \cite{Anusic-Bruin-Cinic2020_top_properties_of_lorenz_maps}. Alvin and \v{C}in\v{c} subsequently developed their symbolic and recurrence theory in greater detail \cite{alvin-Cinic2026-Recurrence_symbolic_dynamics_and_wild_attractors}. In particular, they showed that long-branched dynamics is incompatible with persistent recurrence of a recurrent nonperiodic turning point, while several other forms of recurrence, including uniform and regular recurrence, can occur.

Independently, Bl\'e encountered the same long-branched rotation structure in the real quadratic family \cite{Ble_2204_external_arguments_and_invariant_measures}. Motivated by the relation between rotation parameters on the main cardioid and the real trace of the Mandelbrot set, he constructed the rational-angle maps explicitly and obtained the irrational-angle representatives as limits along continued-fraction convergents. His approach connects the real dynamics of these maps with rigid rotations and with the geometry of quadratic parameter space.

The present paper gives an intrinsic kneading-theoretic description of these rotation-induced long-branched classes. We show that they are precisely the unimodal kneading classes for which $Q(k)\leq1$ and the binary sequence of values $Q(1)Q(2)Q(3)\cdots$ is Sturmian; equivalently, the kneading map is given by a mechanical coding of an irrational rotation. This identifies the constructions of Anu\v{s}i\'c, Bruin, and \v{C}in\v{c} and of Bl\'e within a common combinatorial framework. We then compute the cutting and co-cutting times explicitly in terms of the rotation angle and its continued-fraction expansion, making the arithmetic of the rotation directly accessible to the real dynamics.

This explicit correspondence allows us to investigate statistical properties of the class. In particular, we obtain conditions for the existence and nonexistence of absolutely continuous invariant probability measures and study the Collet--Eckmann condition. 


\subsection{Statement of the main results}
\label{subsec:statement_main_results}
In this work by a unimodal map we mean a continuous noninjective map $f\colon I\to I$ on a compact interval with a unique turning point $c\in\interior(I)$, such that $f$ is strictly monotone on each side of it; see \cref{subsec:unimodal_preliminaries}. Let $Q$ be the kneading map of $f$, and let
\[ 1=S_0<S_1<S_2<\cdots\]
be its sequence of cutting times; see \Cref{subsec:kneading_preliminaries}.

Our first result gives a kneading characterization of the long-branched Sturmian classes. In particular, it shows that within the regime $Q(k)\leq1$, the Sturmian property is equivalent to an explicit mechanical formula for the kneading map, and hence also gives an explicit formula for the cutting times.

\begin{theorem}
\label{thm:equivalence_sturmian_long_branched}
Let $f$ be a unimodal map with $Q(k) \le 1$ for every $k\geq1$. Then the following statements are equivalent:
\begin{enumerate}[(1)]
    \item The binary word $Q(1)Q(2)Q(3)\cdots$ is Sturmian.

    \item There exists a unique irrational number $\omega\in(0,1)$ such that for every $ k \ge 1$
    \begin{equation}
        \label{eq:lower_mechanical_kneading_map}
        Q(k) = \lfloor k\omega\rfloor - \lfloor(k-1)\omega\rfloor.
    \end{equation}
\end{enumerate}
In this case, $Q(1)Q(2)Q(3)\cdots$ is the lower mechanical word of slope $\omega$, and
\[ S_k = 1+k+\lfloor k\omega\rfloor = 1+\lfloor k(1+\omega)\rfloor, \]
for every $k \ge 0$.
\end{theorem}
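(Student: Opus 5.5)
Two things need to be combined. The first is the standard characterization of one-sided Sturmian words as codings of irrational rotations. The second is the admissibility condition on kneading maps, which decides which rotation coding (lower, upper, which intercept) can occur. The cutting-time formula then follows from the recursion $S_k = S_{k-1} + S_{Q(k)}$ with $S_0=1$, $S_1=2$.

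\textbf{(2)$\Rightarrow$(1).} This direction is classical. The lower mechanical word $s_k=\lfloor k\omega\rfloor-\lfloor (k-1)\omega\rfloor$ with irrational slope $\omega$ is aperiodic and has complexity $n+1$, so it is Sturmian (Morse--Hedlund, Coven--Hedlund). Uniqueness of $\omega$ holds because $\omega$ equals the frequency of $1$'s, namely $\lim_k \frac1k\sum_{j\le k}Q(j)=\lim_k\lfloor k\omega\rfloor/k$.

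\textbf{(1)$\Rightarrow$(2).} By the Morse--Hedlund theorem, a Sturmian word $Q(1)Q(2)\cdots$ is a mechanical word: there are an irrational slope $\omega\in(0,1)$ and an intercept $\rho$ with
\[
Q(k)=\lfloor k\omega+\rho\rfloor-\lfloor (k-1)\omega+\rho\rfloor \quad\text{or}\quad Q(k)=\lceil k\omega+\rho\rceil-\lceil (k-1)\omega+\rho\rceil .
\]
We must show that we may take $\rho=0$ in the lower version, i.e.\ that the word is the characteristic word $c_\omega$ preceded by a $0$. The tool is the kneading admissibility condition (stated in \cref{subsec:kneading_preliminaries}):
\[
\{Q(k+j)\}_{j\ge1}\succeq \{Q(j)\}_{j\ge1} \quad\text{for all } k\ge1,
\]
in the lexicographic order. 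Since $Q(k)\le1$ and $Q(1)$ must be $0$ (otherwise $S_1=S_0+S_{Q(1)}$ would force $Q(1)=0$), the word begins with $0$.

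The admissibility condition says that the word $u=Q(1)Q(2)\cdots$ is lexicographically minimal in its own shift orbit. For a Sturmian word of slope $\omega$, all elements of the orbit closure are mechanical words $s_{\omega,\rho}$ or $s'_{\omega,\rho}$, and the lexicographic order on them is monotone in the intercept $\rho\in[0,1)$. The unique infimum of the orbit closure is $0c_\omega$, the lower mechanical word with $\rho=0$. So a word that is $\preceq$ every one of its shifts, and that lies in the subshift, must equal this infimum. This minimality step is the main obstacle. I would prove it via the rotation picture: $s_{\omega,\rho}$ is the coding of the orbit of $\rho$ under $R_\omega$ with respect to the partition $[0,1-\omega)$, $[1-\omega,1)$, and the itinerary map is order-preserving. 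Because $u$ and its shifts all lie in the orbit closure and $u$ is minimal among its shifts, density of $\{k\omega+\rho\}$ shows $\rho$ must be the left endpoint $0$ approached from the right, which gives exactly the lower word with $\rho=0$.

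\textbf{Cutting-time formula.} Since $Q(k)\in\{0,1\}$ and $S_0=1$, $S_1=2$, the recursion gives
\[
S_k=S_{k-1}+1+Q(k).
\]
Telescoping, $S_k=1+k+\sum_{j\le k}Q(j)=1+k+\lfloor k\omega\rfloor$, and this equals $1+\lfloor k(1+\omega)\rfloor$ because $k$ is an integer. The case $k=0$ holds trivially.
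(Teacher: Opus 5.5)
Your proof is correct. The outer structure matches the paper: admissibility reduces to $\sigma^m(w)\succeq_{\mathrm{lex}}w$, the implication (2)$\Rightarrow$(1) and uniqueness of the slope follow from standard facts, and the cutting times follow by telescoping. The genuine difference is in the key step (1)$\Rightarrow$(2).

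You invoke the full Morse--Hedlund representation with an intercept $\rho$ and then pin $\rho$ down dynamically. For $\rho\in[0,1)$ the coding $\rho\mapsto s_{\omega,\rho}$ of $R_\omega$ is strictly increasing for the lexicographic order, and the shift acts by $\rho\mapsto\{\rho+\omega\}$. Density of the rotation orbit then excludes every $\rho>0$, so $w=0c_\omega$ is the lexicographic minimum of the Sturmian subshift.

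The paper instead proves \Cref{lem:lexicographically_minimal_sturmian} purely combinatorially. Lexicographic minimality of the prefix among length-$n$ factors, together with balance, forces the prefix to contain $\lfloor n\omega\rfloor$ ones. Otherwise, comparing it at the first discrepancy with a factor containing $\lfloor n\omega\rfloor$ ones produces two equal-length suffixes whose weights differ by $2$.

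The paper's route never needs to know which mechanical representation (lower or upper, which intercept) $w$ has. It uses only balance and the two possible factor weights, and it yields the prefix counts $|w_1\cdots w_n|_1=\lfloor n\omega\rfloor$ that feed directly into the cutting-time formula. Your route is more conceptual: it identifies $w$ as the extremal point $0c_\omega$ of the subshift, and it matches the intercept-coding picture the paper itself uses later in \Cref{lem:frequency_of_one_followed_by_Sturmian_prefix}.

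Two details should be made explicit.
\begin{enumerate}
\item Your reduction of the admissibility condition in \Cref{thm:kneading_map_admissibility} to $\sigma^k(w)\succeq_{\mathrm{lex}}w$ uses $Q^2(k)=0$. This holds because $Q(k)\in\{0,1\}$ and $Q(0)=Q(1)=0$.
\item Contrary to your claim, the upper words $s'_{\omega,\rho}$ are not monotone in $\rho\in[0,1)$; indeed $s'_{\omega,0}=1c_\omega$ is the maximum. They should be parametrized by $\rho\in(0,1]$ and coded with respect to the partition $(0,1-\omega]\cup(1-\omega,1]$. The same density argument then excludes them entirely, which your sketch (treating only the lower coding) leaves implicit.
\end{enumerate}
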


The parameter $\omega$ in \Cref{thm:equivalence_sturmian_long_branched} is the slope of the Sturmian kneading word. For a unimodal maps as in \Cref{thm:equivalence_sturmian_long_branched} we define the \emph{rotation angle} 
\begin{equation}
    \label{eq:rotation_angle}
    \gamma = \frac{\omega}{1+\omega}.
\end{equation}
For such maps, Anu\v{s}i\'c, Bruin, and \v{C}ini\v{c} \cite{Anusic-Bruin-Cinic2020_top_properties_of_lorenz_maps} define the ``outside" rotation number
\[ \alpha \= \lim_{k \to \infty} \frac{k}{S_k}. \]
The slope of the Sturmian word $\omega$, the rotation angle $\gamma$, and the outside rotation angle $\alpha$ are related by  
\[ \alpha = 1-\gamma = \frac{1}{1+\omega}.\]

The kneading map formula in item $(2)$ in \Cref{thm:equivalence_sturmian_long_branched} characterizes the Sturmian long-branched unimodal maps with $Q(k) \le 1$. Indeed, we have that for every irrational $\omega\in(0,1)$, the function $Q(k) = \lfloor k\omega\rfloor - \lfloor(k-1)\omega\rfloor$, is an admissible kneading map, see \Cref{prop:rotation_like_Q_is_admissible}. Hence every full family of unimodal maps contains a realization of this kneading class. In the real quadratic family, its unique representative is the polynomial constructed by Bl\'e from the rigid rotation of angle $\gamma=\frac{\omega}{1+\omega}$, see \cite[Section~3]{Ble_2204_external_arguments_and_invariant_measures}. 

We say that a long-branched unimodal map $f$ is \emph{Sturmian of angle $\gamma \in (0,1/2) \setminus \Q$} if its kneading map has the form \eqref{eq:lower_mechanical_kneading_map} with $\omega$ given by the relation \eqref{eq:rotation_angle}. We denote by $\cS_{\gamma}$ the collection of all long-branched Sturmian unimodal maps of angle $\gamma$.

We next consider smooth realizations. By an $S$-unimodal map we mean a unimodal map that is $C^3$ away from its unique critical point, this critical point is non-flat of finite order $\ell>1$, and has negative Schwarzian derivative away from its critical point; see \cref{subsec:unimodal_preliminaries}. We will work with the rotation angle $\gamma$ described above. We denote by $\cS_{\gamma}^{\ell}$ the subset of $\cS_{\gamma}$ of $S$-unimodal maps with a non-flat critical point of order $\ell$. 

Let $\gamma\in(0,1/2)\setminus\Q$ with continued fraction expansion $\gamma=[a_1,a_2,a_3,\ldots]$ and convergents $\frac{p_n}{q_n}=[a_1,\ldots,a_n]$. Since $\gamma<1/2$, one has $a_1\geq2$.

The following result gives a condition for the existence of an absolutely continuous invariant probability measure.

\begin{theorem}
\label{thm:acim_rotation_like}
For every $\ell\in(1,2]$, there exists a constant $\eta_\ell>0$ such that, if
\[ \limsup_{n\to\infty} \frac{\ell^{a_{2n+1}}}{n} < \eta_\ell\]
then every $f \in \cS_{\gamma}^{\ell}$ admits an absolutely continuous invariant probability
measure.
\end{theorem}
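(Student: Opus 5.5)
The plan is to verify a known summability criterion for long-branched maps and reduce it to arithmetic of the cutting times. I would use Bruin's result for bounded kneading maps \cite{Bruin1995_induced_maps_markov_partitions}. It gives an $S$-unimodal map with critical order $\ell$ an acip provided
\[ \sum_{n} \frac{S_{n+1}-S_n}{\ldots} \quad\text{or more concretely}\quad \sum_k |D_{S_k}|^{1/\ell}\ \text{(suitably weighted)} < \infty , \]
where $D_{S_k}$ is the $S_k$-th level of the Hofbauer tower. For long-branched maps, Koebe/negative Schwarzian distortion is uniformly controlled because the domains $D_n$ have lengths bounded below. So integrability of the first return time to the base of the tower reduces to a purely combinatorial series.

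The version I would invoke is the Bruin-type condition for $\ell\le 2$: an acip exists if $\sum_k |c_{S_k}-c|^{1/\ell}$ decays fast enough along closest returns. Equivalently, one needs the expected return time $\sum_n n\,|\{\text{first return}=n\}|<\infty$. The first step is to identify the closest returns. By \Cref{thm:equivalence_sturmian_long_branched}, $S_k=1+\lfloor k(1+\omega)\rfloor$, and the co-cutting times are the complementary Beatty sequence. The closest returns of $c_{S_k}$ to $c$ occur at $k=q_n$, the convergent denominators of $\gamma$. They alternate sides with the parity of $n$.

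The second step estimates the distances $|c_{S_{q_n}}-c|$. A return on the ``bad'' side corresponds to a long chain of $a_{n+1}$ consecutive near-returns, and each of these is contracted by the critical point by a factor comparable to $\ell$ in the exponent. I expect an estimate of the form
\[ |c_{S_{q_{2n}}}-c| \gtrsim \exp\!\bigl(-C\,\ell^{a_{2n+1}}\bigr)\cdot(\text{polynomial in } n) , \]
and an expansion along the branch of order $\ell^{a_{2n+1}}$. The mass of points in the $n$-th annulus around $c$ with first return time about $S_{q_{2n+1}}$ is then bounded by something like $e^{-\kappa n}\cdot e^{c\,\ell^{a_{2n+1}}}$. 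Here $e^{-\kappa n}$ comes from the uniform expansion guaranteed by long branches: every passage through a full branch loses a fixed proportion by Koebe. The hypothesis $\ell^{a_{2n+1}}<\eta_\ell n$ makes the exponent $c\,\ell^{a_{2n+1}}-\kappa n\le -(\kappa-c\eta_\ell)n$. Choosing $\eta_\ell<\kappa/c$ gives geometric decay, which dominates the polynomial growth of $q_n$ relative to $n$ only if $q_n$ is at most exponential. That holds because $a_{2n+1}=O(\log n)$ under the hypothesis, though the even-indexed $a_{2n}$ must also be handled. I would handle them by noting that those returns land on the harmless side, where the lifted tower measure contributes only $\sum a_{2n}\,e^{-\kappa' \sum_{j\le n} a_j}$, which is summable unconditionally.

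The main obstacle is the second step: getting a sharp exponent $\ell^{a}$ for how much a run of $a$ consecutive returns to the same side costs. The restriction $\ell\le 2$ should enter there, since it keeps the iterated power-law shrinking from compounding superexponentially in other directions. I would first try to prove it by induction on the run length, using the Koebe principle on the Hofbauer tower levels, whose lengths are uniformly bounded below.
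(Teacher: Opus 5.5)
Your proposal has a genuine gap. The step that carries the argument is only conjectured, and the surrounding framework would not support it.

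\textbf{Problems with the proposed route.}
You never fix an acip criterion: the displayed series has an ellipsis and an unspecified weighting. Bounded distortion on the tower also does not make the question combinatorial. For these long-branched maps an acip exists if and only if $\sum_k|J_k|<\infty$ (\Cref{prop:real_kneading_neighborhood_acip_criterion}), so what you need are \emph{upper} bounds on the kneading neighborhoods.

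Your key estimate is instead a \emph{lower} bound on the distance of close returns. Such bounds push $|J_k|$ up, since $[c,c_m]\subset J_k$ whenever $R(m)\ge S_k$ (\Cref{lem:critical_cylinder_connected}). This is exactly how the paper proves \emph{nonexistence} in \Cref{prop:dynamical_nonexistence_acip_rotation_like}.

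The one upper bound you write, $e^{-\kappa n}e^{c\ell^{a_{2n+1}}}$, rests on a factor $e^{-\kappa n}$ attributed to ``uniform expansion guaranteed by long branches''. No such factor exists. Long branches give Koebe space, i.e.\ bounded distortion, not expansion along the critical orbit; these maps are not even Collet--Eckmann (\Cref{thm:no_CE_long_branched_sturmian}).

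The final summation fails as well. It needs $q_n$ to grow at most exponentially, but the hypothesis places no restriction on $a_{2n}$. Even with $a_{2n}=1$ and $a_{2n+1}\approx\log_\ell(\eta n)$, which the hypothesis allows, $q_{2n+1}\ge\prod_{j=0}^{n}a_{2j+1}$ grows superexponentially.

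Some combinatorial identifications are also off. The co-cutting times are not the complementary Beatty sequence; they are the sparse sequence $q_{2m}+jq_{2m+1}$ of \Cref{thm:explicit_co_cutting_times_rotation_like}. The relevant deep returns occur at the times $q_{2m}$ (co-cutting) and $q_{2m+1}=S_{d_m}$ (cutting), not at $S_{q_n}$.

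\textbf{The missing idea: work in the principal nest.}
Measure geometry in the principal nest rather than through tails of return times. The paper applies the Bruin--Shen--van Strien criterion (\Cref{thm:Bruin_shen_vs_unimodal_acip_theorem}): an acip exists as soon as $\mu_n=|I_{n+1}|/|I_n|\le\varepsilon(\ell)$ for all large $n$. No sum over return times appears, so the growth of $q_n$ is irrelevant. The argument then has four ingredients:
\begin{enumerate}
\item The exponential gain you wanted is Shen's decay of geometry with a rate depending only on $\ell$: $\mu_{m(k)}\le e^{-b_\ell k}$ at the $k$-th noncentral level (\Cref{cor:uniform_decay_fixed_critical_order}). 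This is where $\ell\le2$ is used.
\item Each central return in the following cascade costs at most an $\ell$-th root, $\mu_{n+1}\le K_\ell\mu_n^{1/\ell}$ (\Cref{lem:central_pullback_general_order}). This is obtained by pulling back through the critical normal form, using the distortion bound of \Cref{prop:principal_nest_extension_distortion}.
\item Bl\'e's bound $L_k\le a_{2k+1}$ (or $a_{2k-1}$ when $a_1>2$, \Cref{lem:continued_fraction_cascade_length}) limits the number of such roots.
\item Iterating gives $\mu_{m(k)+r}\le M_\ell\exp(-b_\ell k/\ell^{r})$ for $0\le r<L_k$. The hypothesis $\ell^{a_{2k+1}}<\eta_\ell k$ for large $k$ keeps all of these below $\varepsilon(\ell)$.
\end{enumerate}
So the shape ``$\ell^{a_{2n+1}}$ against $n$'' you anticipated is correct. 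It comes from iterated $\ell$-th roots of scaling factors along central cascades, balanced against decay of geometry, and the even partial quotients never enter.
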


The converse problem is governed by a different interaction between the continued-fraction coefficients and the recurrence of the critical orbit. We obtain a sufficient condition for nonexistence of an acip, valid for every finite critical order. The recurrence scale appearing in the general statement may depend on the realization.

\begin{theorem}
\label{thm:general_arithmetic_nonexistence_acip}
Let $ \ell > 1$, and let $f \in \cS_{\gamma}^{\ell}$. There exists a constant $\Lambda_f > 1$, depending on $f$, such that if
\[ \sum_{n\geq1} a_{2n+1}(q_{2n}-p_{2n}) \Lambda_f^{-q_{2n}} = +\infty,\]
then $f$ has no absolutely continuous invariant probability measure.
\end{theorem}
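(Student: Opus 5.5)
Our plan is to use Bruin's criterion for long-branched maps: since $Q(k)\le 1$, $f$ is long-branched, and by \cite{Bruin1995_induced_maps_markov_partitions} there is an induced Markov map $F$ on a fixed interval around $c$ whose branches have images of length uniformly bounded from below. Bounded distortion for $S$-unimodal maps with non-flat critical point then turns this into a Koebe-type statement. Moreover, $f$ has an acip if and only if the inducing time $\tau$ of $F$ is integrable with respect to Lebesgue measure. We will therefore show that $\int \tau\,dm=+\infty$ under the stated divergence hypothesis. To do so we bound from below the Lebesgue measure of the set of points whose first return to the base is delayed because they shadow the critical orbit along a long Sturmian block.

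First we describe the combinatorics. By \cref{thm:equivalence_sturmian_long_branched} we have $S_k=1+\lfloor k(1+\omega)\rfloor$ with $\omega=\gamma/(1-\gamma)$, so $Q(k)$ is the lower mechanical word of slope $\omega$. Its block structure is governed by the continued fraction of $\gamma$. Along the odd-indexed partial quotients $a_{2n+1}$, the word contains $a_{2n+1}$ consecutive repetitions of a block of length $q_{2n}$ (in the $S_k$ time scale). Here $q_{2n}-p_{2n}$ counts the number of $Q$-values equal to $0$, that is, the cutting times inside the block. In Hofbauer tower terms, the critical orbit returns close to $c$ at the times $S_{k}$ with $k$ near multiples of $q_{2n}$, and it stays close to $c$ for $a_{2n+1}$ periods. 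Concretely, $|f^{S_k}(c)-c|\le \Lambda_f^{-q_{2n}}$ there, where $\Lambda_f>1$ is a uniform expansion rate along the long-branched tower. This rate is given by the Mañé-type hyperbolicity away from $c$ together with the lower bound on the branch lengths. This gives a realization-dependent $\Lambda_f$, as in the statement.

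Second, we estimate the measure. For each $n$, consider the interval $J_n$ of points $x$ near $c$ such that $f^{S_{k_n}}(x)$ lies in the $\Lambda_f^{-q_{2n}}$-neighborhood of the close return. By Koebe distortion on the long branches, $J_n$ has measure at least a constant times $\Lambda_f^{-q_{2n}}$. Points of $J_n$ do not return to the inducing base while they follow the $a_{2n+1}$ repetitions, so on $J_n$ the inducing time is at least of order $a_{2n+1}(q_{2n}-p_{2n})$. The same happens for a bounded proportion of preimages along the block, which contributes the factor $q_{2n}-p_{2n}$. Summing gives
\[ \int\tau\,dm \;\gtrsim\; \sum_n a_{2n+1}(q_{2n}-p_{2n})\Lambda_f^{-q_{2n}}=+\infty .\]
We must also check that the sets $J_n$ overlap boundedly, which follows because the scales $q_{2n}$ grow exponentially.

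The main obstacle is the first step: we need a lower bound on the closeness $|f^{S_k}(c)-c|$ at the Sturmian near-returns that is uniform in $n$ and exponential in $q_{2n}$. We also need the shadowing of the critical orbit to persist for all $a_{2n+1}$ repetitions despite the critical factor $|x-c|^{\ell}$ incurred at each passage. The first approach we would try is to compare the lengths of the levels $D_{S_k}$ in the Hofbauer tower via the co-cutting times. Using the long-branched property, each level has length at least $\delta>0$. We would then iterate the distortion bound through the cutting times inside one period block to obtain the factor $\Lambda_f^{-q_{2n}}$, with $\Lambda_f$ depending on $\delta$, $\ell$ and the distortion constants of $f$.
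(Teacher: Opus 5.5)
\textbf{There is a genuine gap.} Your overall frame is the right one: Bruin's long-branched inducing criterion, with non-integrability forced by points that shadow the critical orbit near the convergent returns. The gap sits exactly at the step you flag as the main obstacle, and your suggested way around it points in the wrong direction.

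\textbf{The recurrence bound.} What is needed is a \emph{lower} bound $|f^{q_{2n}}(c)-c|\gtrsim\Lambda_f^{-q_{2n}}$. Your first step writes it as an upper bound, which is useless for nonexistence. Expansion along the tower, the lower bound $\delta$ on level lengths, and Koebe distortion all control how \emph{small} pullbacks are. None of them prevents the critical orbit from returning extremely close to $c$. The natural way to turn level lengths into a lower bound is to pull back a level of length at least $\delta$ through the agreement time $S_{d_n}=q_{2n+1}$. That only gives $|J_{d_n}|\gtrsim\Lambda^{-q_{2n+1}}$, and since $q_{2n+1}\geq a_{2n+1}q_{2n}$, this destroys precisely the factor $a_{2n+1}$ the theorem needs.

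The missing ingredient is the trajectory-recurrence estimate of Nowicki--Przytycki. Non-flatness gives $|f'(x)|\leq H_f|x-c|^{\alpha_f}$ with $\alpha_f=\min\{1,\ell-1\}$. Roughly, if $c_m$ came within about $\Lambda^{-m}$ of $c$, then $f^m$ would map a small neighborhood of $c$ into itself and produce a periodic attractor, which is excluded. This gives $|f^m(c)-c|\geq C_f\Lambda_f^{-m}$ for \emph{every} $m\geq1$. Here $\Lambda_f$ comes from an upper bound on $|f'|$, not from an expansion rate.

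\textbf{The measure step.} This also needs repair.
\begin{itemize}
\item The relevant close returns are at the co-cutting times $q_{2n}$, not at cutting times $S_k$. The key combinatorial fact is $R(q_{2n})=q_{2n+1}=S_{d_n}$, with $d_n=q_{2n+1}-p_{2n+1}$.
\item A monotonicity argument then puts the whole interval $[c,f^{q_{2n}}(c)]$ inside the kneading cylinder $J_{d_n}$ of points whose itinerary agrees with $e_1\cdots e_{S_{d_n}-1}$. So $|J_{d_n}|\geq|f^{q_{2n}}(c)-c|$ directly in phase space.
\item No pullback by $f^{S_{k_n}}$ and no Koebe step is needed. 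Your claim that such a pullback has measure $\gtrsim\Lambda_f^{-q_{2n}}$ is unjustified, since pulling back divides by a large derivative.
\item These cylinders are nested, not boundedly overlapping, so you cannot simply sum over $n$. Use summation by parts instead: $\sum_k k|E_k|<\infty$ if and only if $\sum_k|J_k|<\infty$. Moreover $|J_k|\geq|J_{d_n}|$ for $d_{n-1}<k\leq d_n$, and $d_n-d_{n-1}=a_{2n+1}(q_{2n}-p_{2n})$.
\end{itemize}
That count is where the factor $a_{2n+1}(q_{2n}-p_{2n})$ comes from. Note that $q_{2n}-p_{2n}$ counts cutting-time indices in a window of length $q_{2n}$, not zeros of $Q$.
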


For the quadratic representatives the scale in the preceding theorem can be chosen uniformly; see \Cref{sec:non_existence_of_acip}. This gives a purely arithmetic nonexistence criterion for the entire quadratic family considered here and yields examples not covered by Bl\'e's earlier sufficient condition; see \Cref{ex:non_existence_beyond_ble}.

Our final result concerns hyperbolicity along the critical orbit. In contrast with the existence of an acip, which depends on the arithmetic of the angle, failure of the Collet--Eckmann condition is universal throughout the irrational long-branched Sturmian classes.

\begin{theorem}
\label{thm:no_CE_long_branched_sturmian}
Let $\gamma\in(0,1/2)\setminus\mathbb Q$ and let $\ell>1$. No map $f\in\mathcal S_\gamma^\ell$ satisfies the Collet--Eckmann condition.
\end{theorem}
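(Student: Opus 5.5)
The plan is to use Nowicki's criterion, under which Collet--Eckmann for $S$-unimodal maps with non-flat critical point is equivalent to \emph{uniform hyperbolicity on periodic orbits} (TCE / backward Collet--Eckmann, Nowicki--Przytycki, Nowicki--Sands). Concretely, we aim to exhibit a sequence of periodic points $x_n$ of periods $P_n\to\infty$ whose multipliers satisfy $|Df^{P_n}(x_n)|^{1/P_n}\to1$. Equivalently, and more directly, we look for times $m_n\to\infty$ with $|Df^{m_n}(f(c))|^{1/m_n}\to 1$ along a subsequence, which contradicts $|Df^n(f(c))|\ge C\lambda^n$. The natural candidates are the closest-return times of the critical orbit. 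By \Cref{thm:equivalence_sturmian_long_branched}, $S_k=1+\lfloor k(1+\omega)\rfloor$, and $|f^{S_k}(c)-c|$ is small exactly when $S_k$ is related to a convergent denominator $q_n$ of $\gamma$, so the times $q_n$ (or $S_{q_{n}-p_n}$, roughly) are the recurrence scales.

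First, kneading combinatorics. For $k=q_{2n}-p_{2n}$, say, the cutting-time structure gives $Q(k+1)$ small and shows that $f^{S_k}(c)$ lies in a very small neighbourhood of $c$. In particular $D_{S_k}=[f^{S_k}(c),c_{S_k}]$ has length comparable to $|f^{S_k}(c)-c|$, and the branch of $f^{S_k-1}$ from a one-sided neighbourhood $[c_{S_k-1}$-preimage$]$ of $f(c)$ onto $D_{S_k}$ is monotone with Koebe space. Long-branchedness, i.e.\ bounded $Q$, gives $|D_{S_j}|\ge\delta>0$ for all the \emph{other} levels of the Hofbauer tower, and hence a uniform Koebe extension by a definite amount.

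Second, the derivative estimate. Let $J$ be the maximal interval containing $f(c)$ on which $f^{S_k-1}$ is monotone. Its image has length bounded below by $\delta$, so Koebe gives $|Df^{S_k-1}(f(c))|\asymp |f^{S_k-1}(J)|/|J|$. Now $f(c)\in\partial J$ or is interior, and $J$ is squeezed between preimages of $c$ near $f(c)$ coming from the closest approaches $f^{S_j}(c)$ to $c$ at earlier convergent levels. Using the non-flat critical point, $|J|\gtrsim |f^{S_{k'}}(c)-c|^{\ell}$ for the previous closest return $k'$. This yields $|Df^{S_k-1}(f(c))|\lesssim |c-f^{S_{k'}}(c)|^{-\ell}$. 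If distances to $c$ decay at most exponentially in the level, $|c-f^{S_{k'}}(c)|\ge e^{-\epsilon S_k}$ fails to give subexponential growth unless we compare more carefully. Instead we use the chain rule: $|Df^{S_k}(f(c))| \asymp |Df^{S_k-1}(f(c))|\cdot|f^{S_k}(c)-c|^{\ell-1}$. Writing $d_k=|f^{S_k}(c)-c|$, Koebe also gives $d_k\asymp |Df^{S_k-1}(f(c))|\cdot|J'|$ for the subinterval hitting $D_{S_k}$. The factor $d_k^{\ell-1}$ then exactly compensates the large derivative, giving $|Df^{S_k}(f(c))|\lesssim d_k^{\ell-1}/|J|\cdots$ with $\lambda^{S_k}$ impossible. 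The key is the ratio structure: along a rotation, returns satisfy $d_{k_{n+1}}\gtrsim d_{k_n}^{\ell}$ up to bounded distortion, so the derivative at return $n$ is bounded by a product telescoping to $O(1)\cdot d_{k_n}^{-(\ell-1)}$ times bounded factors. If CE held, $d_{k_n}$ would decay like $\lambda^{-S_{k_n}/(\ell-1)}$, forcing Collet--Eckmann-type decay that contradicts the Sturmian (non-persistent but balanced) recurrence, namely the fact that consecutive convergent returns are comparable in combinatorial depth.

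The main obstacle is the second step: converting the comparability of consecutive returns into a genuine subexponential derivative bound uniformly in $\gamma$. This matters especially for bounded-type $\gamma$, where the $a_n$ give no help. I would first try a Tsujii/Bruin-type argument instead: CE implies $\sum_n |Df^n(f(c))|^{-1/\ell}$-type summability and exponential recurrence $d_k\ge e^{-\epsilon S_k}$, while long-branched plus Sturmian self-similarity gives $d_{k_{n+1}}\le C d_{k_n}^{\ell}$ via renormalization-like nesting. That would produce superexponential decay of $d$, and hence contradict polynomial recurrence $d_k\ge e^{-\epsilon S_k}$, which follows from CE (Collet--Eckmann implies slow recurrence by Nowicki--Sands).
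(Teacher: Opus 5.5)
Your proposal does not reach a proof. The step you yourself call the main obstacle, a subexponential upper bound for $|Df^{m}(f(c))|$ along some subsequence of times, is never carried out, and the concrete claims around it are incorrect or inconsistent.

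\textbf{The close returns are placed at the wrong times.} For a cutting time one has $R(S_k)=S_{Q(k+1)}\le 2$ when $Q\le 1$, so $c_{S_k}$ stays a definite distance from $c$. Consistently with this, long-branchedness means $|D_{S_k}|$ is bounded below, which contradicts your assertion that $|D_{S_k}|$ is comparable to $|f^{S_k}(c)-c|$. The deep returns occur at co-cutting times such as $q_{2n}$, where $R(q_{2n})=q_{2n+1}$ (\Cref{thm:explicit_co_cutting_times_rotation_like}).

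\textbf{The metric relations between consecutive returns are unsupported.} They are asserted in opposite directions: $d_{k_{n+1}}\gtrsim d_{k_n}^{\ell}$ in your second step, $d_{k_{n+1}}\le C d_{k_n}^{\ell}$ in the fallback. Neither is derived, and these maps are nonrenormalizable, so there is no ``renormalization-like nesting'' to produce such a power law.

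\textbf{The derivative bound would not contradict Collet--Eckmann.} Suppose you had $|Df^{T}(f(c))|\lesssim d^{-(\ell-1)}$ at a return time $T$ with return distance $d$. This is compatible with Collet--Eckmann as soon as $d$ decays exponentially in $T$. Nothing in the combinatorics (``comparable combinatorial depth'') gives a subexponential lower bound on $d$.

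\textbf{The fallback does not close either.} Decay $\log(1/d_{k_n})\gtrsim \ell^{n}$ is superexponential in the return index $n$. But the return times $q_n$ grow at least like $\varphi^n$, and arbitrarily fast for large partial quotients. So it contradicts $d\ge e^{-\epsilon T}$ only if $\ell^n$ outgrows the return times, which fails in general. Moreover, the pointwise bound $|c_m-c|\ge C_f\Lambda_f^{-m}$ holds for every map in $\cS_\gamma^\ell$ whether or not Collet--Eckmann holds (\Cref{lem:map_dependent_exponential_lower_recurrence}). So it cannot be the ingredient Collet--Eckmann contributes.

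\textbf{The missing idea: argue on average over all times.} The paper does not work along a subsequence of deep returns. It combines four facts:
\begin{enumerate}[(i)]
\item Collet--Eckmann gives exponential shrinking of monotone laps (Nowicki).
\item $f^{R(m)}$ is monotone on $[c,c_m]$, so $R(m)\lesssim 1+\log|c_m-c|^{-1}$.
\item Under Collet--Eckmann one has the averaged recurrence bound $\sum_{m\le N}\log|c_m-c|^{-1}\le C_fN$ (Nowicki--Przytycki). Together with (i) and (ii) this bounds the Ces\`aro means of $R$ (\Cref{prop:CE_implies_bounded_mean_agreement}).
\item A purely combinatorial count: in the Sturmian kneading word the factor $1\,Q_\gamma(1)\cdots Q_\gamma(n)$ occurs with frequency at least $u_n(\omega_\gamma)$. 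Each occurrence forces $R(S_j-1)>S_n$, and $\sum_n u_n(\omega_\gamma)=+\infty$. Hence the Ces\`aro means of $R$ diverge (\Cref{prop:Sturmian_mean_agreement_diverges}).
\end{enumerate}
No metric comparison between consecutive returns is needed. This is exactly what handles the bounded-type case you flagged.
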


These results extend and refine Bl\'e's results for quadratic maps. Bl\'e also states that every irrational quadratic representative in this class does not satisfy the Collet--Eckmann condition. His proof uses persistent recurrence of the critical point. However, a recurrent nonperiodic turning point with bounded kneading map cannot be persistently recurrent; see \cite[Theorem~6.2]{alvin-Cinic2026-Recurrence_symbolic_dynamics_and_wild_attractors}. Consequently, that argument does not establish the non-Collet--Eckmann conclusion for every irrational angle; see \cref{subsec:Ble_persistent_recurrence_gap}.

\subsection{Organization}

In \Cref{sec:preliminaries} we collect the definitions and results used throughout the paper, including the basic notions of unimodal kneading theory, cutting and co-cutting times, the principal nest and its distortion estimates, continued fractions, and Sturmian words. In \Cref{sec:sturmian_classes} we characterize the long-branched Sturmian kneading classes, prove their admissibility, identify them with the constructions of Anu\v{s}i\'c, Bruin, and \v{C}in\v{c} and of Bl\'e, and compute explicitly their cutting and co-cutting times in terms of the rotation angle and its continued-fraction convergents. In particular, we identify the quadratic representative of each class with the corresponding parameter in Bl\'e's construction.

In \Cref{sec:acim} we study the existence of absolutely continuous invariant probability measures. We  combine the combinatorial information with decay of geometry and distortion estimates to control all sufficiently deep principal-nest scaling factors. This yields the arithmetic existence criterion of \Cref{thm:acim_rotation_like}.

In \Cref{sec:non_existence_of_acip} we turn to nonexistence of absolutely continuous invariant probability measures. Using the long-branched inducing scheme, we first express existence of an acip in terms of summability of the lengths of suitable kneading neighborhoods. We then use the explicit co-cutting times to obtain lower bounds for these neighborhoods from close returns of the critical orbit, and combine them with quantitative lower bounds on critical recurrence. This to the arithmetic nonexistence criterion of \Cref{thm:general_arithmetic_nonexistence_acip} and, in the quadratic case, to a uniform version which improves Bl\'e's sufficient condition.

Finally, in \Cref{sec:non_Collet_Eckmann} we prove \Cref{thm:no_CE_long_branched_sturmian}. We first show that the Collet--Eckmann condition forces the Ces\`aro averages of the first-disagreement times of the kneading sequence to remain bounded. We then prove, using the mechanical coding of the underlying irrational rotation and the distribution of its one-sided closest returns, that these Ces\`aro averages diverge for every long-branched Sturmian kneading sequence. The resulting contradiction proves the failure of the Collet--Eckmann condition for every irrational angle. We conclude by discussing the persistent-recurrence argument used by Bl\'e and explaining why it does not establish the corresponding non-Collet--Eckmann statement.


\subsection*{Acknowledgments}
The author is grateful to Juan Rivera-Letelier for suggesting the problem that initiated this project, and to Weixiao Shen for useful discussions during the preparation of this work. This work was supported by the New Cornerstone Science Foundation through the New Cornerstone Investigator Program.


\section{Preliminaries}
\label{sec:preliminaries}

Throughout the paper, $\N$ denotes the set of positive integers and $\N_0=\N\cup\{0\}$. Every interval is equipped with the Euclidean metric and $|J|$ denotes the length of an interval $J$.

\subsection{Unimodal and \mathinhead{S}{S}-unimodal maps}
\label{subsec:unimodal_preliminaries}

In this work, a \emph{unimodal map} is a noninjective continuous map $f\colon I\to I$ on a compact interval for which there exists a unique extremum $c\in\interior(I)$, and is strictly monotone on each side of $\{c\}$. We will assume that $c$ is a maximum and thus $f$ is increasing for $x < c$ and decreasing for $x > c$. We will also assume the standard boundary condition $f(\partial I) \subset \partial I$.

Given $x \in I$ its \emph{orbit} under $f$ is the set 
\[\cO_f(x) \= \{ f^n(x) \colon n \in \N_0 \},\]
and its \emph{$\omega$-limit} set is defined as
\[ \omega_f(x) \= \{ y \in I \colon \text{ there exists } \{n_{k_i}\}_{i \ge 1} \text{ with } f^{n_{k_i}}(x) \to y \text{ as } i \to \infty \}. \]

For every $n \ge 0$ we write
\[ c_n\=f^n(c),\]
and call $\overline{\cO_f(c)}$ the \emph{postcritical} set of $f$. We restrict to the nontrivial normalization $c_2 < c < c_1$, and $c_2 < c_3$, so the core $[c_2,c_1]$ is forward invariant. Conjugation by an orientation-reversing affine homeomorphism does not change the cutting times, the kneading map, or the co-cutting times. Thus the minimum-type quadratic family $P_c(x)=x^2+c$ is covered by the same combinatorial conventions after affine conjugation.

By a $C^3$ unimodal maps, we mean a unimodal maps of class $C^1$ that is $C^3$ except at its turning point $c$. A turning point is \emph{non-flat of order $\ell>1$} if there are $C^3$ diffeomorphisms $\phi$ and $\psi$ of $\R$, with $\phi(c)=0$ and $\psi(f(c))=0$, such that
\[ |\psi(f(x))| = |\phi(x)|^\ell \]
near $c$. We call $f$ an \emph{$S$-unimodal map} if it is $C^3$ away from $c$, has a non-flat turning point, and 
\[ \frac{f'''(x)}{f'(x)} - \frac{3}{2} \left( \frac{f''(x)}{f'(x)} \right)^2 < 0, \]
wherever $f'\neq0$. All $S$-unimodal maps considered below have no attracting or parabolic periodic orbit. In particular, they have no wandering intervals and they satisfy the contraction principle; see \cite{Olivares-Vinales_Shen_non_existence_of_wandering_intervals_2025} and references therein.

A unimodal map $f$ with turning point $c$ is said to satisfy the \emph{Collet--Eckmann condition} if there exist constants $C>0$ and $\lambda>1$ such that
\[ \bigl|Df^{n}(f(c))\bigr|\ge C\lambda^n, \]
for every $n \ge 1$. Equivalently, the derivative along the forward orbit of the critical value grows exponentially.


\subsection{Itineraries, cutting times, and admissibility}
\label{subsec:kneading_preliminaries}
Let $f \colon I \longrightarrow I$ be a unimodal map with turning point $c \in I$. For $x\in I$ whose orbit does not meet $c$, its itinerary is the binary sequence $\iota(x)=\iota_0(x)\iota_1(x)\cdots$, where
\[  \iota_j(x) = 
    \begin{cases}
        0,&f^j(x)<c,\\
        1,&f^j(x)>c.
    \end{cases}
\]
If an orbit meets $c$, one can use the usual one-sided itinerary or the symbol $C$; see \cite{Milnor_Thurston1986_On_iterated_maps_of_the_interval,MevS11}. When the turning point is not periodic, its kneading sequence is the binary sequence $K(f) = e_1e_2 \ldots$ of the critical value $c_1$. Thus, $e_j = \iota_0 (c_j)$ for every $j \ge 1$. 

A binary sequence $\nu \in \{0,1\}^{\N}$ is called \emph{admissible} if it is the kneading sequence of a unimodal map with nonperiodic turning point. Let $\sigma$ denote the left shift map. In the present convention, the admissible kneading sequences are precisely the sequences $\nu=10\cdots$ such that for every $n \ge 0$
\[ \sigma(\nu) \preceq_{\rm pl} \sigma^n(\nu) \preceq_{\rm pl} \nu\]
where $\preceq_{\mathrm{pl}}$ is the parity-lexicographical order; see \cite[Theorems~3.83 and~3.90]{Bruin2022_Top_and_erg_symb_dyn}.

Define the sequence of intervals $\{D_n\}_{n \ge 1}$ inductively by $D_1 = [c,c_1]$, and for each $n \ge 2$, by 
\begin{equation}
    \label{def:Hofbauer_tower}
    D_{n} \=  
    \begin{cases}
        f(D_{n-1}) & \text{ if } c \not\in D_{n-1}, \\
        [c_{n},c_1] & \text{ if } c \in D_{n-1}.
    \end{cases} 
\end{equation}
An integer $n \ge 1$ will be called a \emph{cutting time} if $c \in D_n$. This construction was introduced by Hofbauer \cite{Hofbauer-1980_The_top_entrpy_of_axx-1} and the collection $\{D_n\}_{n \ge 1}$ is usually called the \emph{Hofbauer tower} associated with $f$.
We will denote by $\{ S_k \}_{k \ge 0}$ the sequence of cutting times. From the assumption $c_2 < c< c_1$, it follows that $S_0=1$ and $S_1 =2$. The difference of two consecutive cutting times is again a cutting time. Hence there is a map $Q\colon\N \longrightarrow \N_0$, such that for every $k \ge 1$
\begin{equation}
    \label{eq:kneading_map_recursion}
    S_k-S_{k-1} = S_{Q(k)}.
\end{equation}
This is the \emph{kneading map} of $f$.

We use the following admissibility criterion of Hofbauer and Bruin; see \cite{Bruin1995_Combinatorics_of_kneading_maps} and \cite[Theorem~3.90]{Bruin2022_Top_and_erg_symb_dyn}. For convenience, we extend $Q$ to $\N_0$ by setting $Q(0)=0$.

\begin{theorem}[Kneading-map admissibility]
    \label{thm:kneading_map_admissibility}
A map $Q\colon\N_0 \to \N_0$ with $Q(0) = 0$ is an admissible kneading map if and only if
\[ Q(k)\leq k-1 \qquad \text{and} \qquad \bigl(Q(k+j)\bigr)_{j\geq1} \succeq_{\mathrm{lex}} \bigl(Q(Q^2(k)+j)\bigr)_{j\geq1} \]
for every $k\geq1$.
\end{theorem}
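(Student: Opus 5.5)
The plan is to translate the kneading-sequence admissibility criterion (parity-lexicographic, as recalled above from \cite[Theorems~3.83 and~3.90]{Bruin2022_Top_and_erg_symb_dyn}) into a statement about $Q$, using the Hofbauer tower as the dictionary. The first step is to reconstruct the kneading sequence from a given $Q$ with $Q(k)\le k-1$. Define $S_0=1$ and $S_k=S_{k-1}+S_{Q(k)}$; this is well defined precisely because $Q(k)\le k-1$. Then set $\nu=e_1e_2\cdots$ by $e_1=1$ and
\[ e_{S_{k-1}+1}\cdots e_{S_k} = e_1\cdots e_{S_{Q(k)}-1}\,\overline{e_{S_{Q(k)}}}, \]
where the bar denotes flipping the last symbol. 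I would verify that this is exactly the rule by which cutting times and the kneading map are read off a kneading sequence: $n$ is a cutting time iff $\sigma^{n}$ of the critical itinerary first disagrees with $\nu$ at a position dictated by the tower intervals $D_n$. Hence $Q\mapsto\nu$ is a bijection between maps with $Q(k)\le k-1$ and sequences beginning with $10$ having no periodic turning point. This settles necessity of $Q(k)\le k-1$, since $S_k-S_{k-1}<S_k$ forces $Q(k)<k$.

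The second step is to express the inequalities $\sigma(\nu)\preceq_{\mathrm{pl}}\sigma^n(\nu)\preceq_{\mathrm{pl}}\nu$ via the tower. The upper inequality $\sigma^n(\nu)\preceq\nu$ is automatic from the construction: the block-copying rule says that $\sigma^{S_{k-1}}\nu$ agrees with $\nu$ for $S_{Q(k)}-1$ symbols and then disagrees with the correct parity. For non-cutting $n$, I would argue by induction that $D_n=f(D_{n-1})$ is contained in $[c,c_1]$ or $[c_2,c]$ appropriately, so the comparison is inherited. The substantive inequality is the lower bound $\sigma(\nu)\preceq\sigma^n(\nu)$, i.e.\ that $c_2$ is the leftmost point of the orbit. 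Using the tower, the endpoint $c_{n}$ of $D_n$ for $n=S_k$ is compared with $c_2$. The first disagreement of $\sigma^{S_k}\nu$ with $\nu$ occurs after $S_{Q(k+1)}$ symbols, and the relevant ``lower'' comparison reduces to comparing the continuation after time $S_k$ with the continuation after time $S_{Q(k)}$, and then after $S_{Q^2(k)}$. This is where the iterate $Q^2(k)$ enters: $D_{S_k}=[c_{S_k},c_1]$ has its free endpoint inherited from $D_{S_{Q(k)}}$, whose endpoint in turn comes from level $S_{Q^2(k)}$.

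The third step is to show that comparing the itineraries $\sigma^{S_k}\nu$ and $\sigma^{S_{Q^2(k)}}\nu$ in parity-lex order is equivalent to comparing the sequences of subsequent kneading values $(Q(k+j))_{j\ge1}$ and $(Q(Q^2(k)+j))_{j\ge1}$ lexicographically. Because itineraries are concatenations of blocks determined by the successive $Q$-values, a larger $Q$-value at the first differing block means a longer agreement with $\nu$ before the flip, hence a larger point in the order. The parities should cancel because each block ends with one flip and the preceding common blocks contribute equal parity. This yields both directions: necessity by reading the condition off any admissible $\nu$, sufficiency by checking all $n$ via the blocks.

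I expect the main obstacle to be the parity bookkeeping in the third step. I must show that the parity of the common prefix before the first differing block is the same on both sides, so that the parity-lex comparison reduces to plain lexicographic comparison of $Q$-values. My first attempt will be an induction on $k$ that simultaneously tracks the orientation of $f^{S_k-1}$ on the tower levels and the endpoint structure of $D_{S_k}$.
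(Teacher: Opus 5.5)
The paper gives no proof of this statement; it imports it from Hofbauer and Bruin \cite{Bruin1995_Combinatorics_of_kneading_maps}, \cite[Theorem~3.90]{Bruin2022_Top_and_erg_symb_dyn}. Two parts of your plan are sound. First, step~3 works. By induction $e_1\cdots e_{S_j}$ has an odd number of $1$'s, so each block $e_1\cdots e_{S_{Q(i)}-1}\overline{e_{S_{Q(i)}}}$ has an even number. Hence $\sigma^{S_k}(\nu)\succeq_{\mathrm{pl}}\sigma^{S_{Q^2(k)}}(\nu)$ is equivalent to $(Q(k+j))_{j\ge1}\succeq_{\mathrm{lex}}(Q(Q^2(k)+j))_{j\ge1}$. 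Second, necessity does follow from the tower. When $Q(k)\ge1$ it comes from the nesting $D_{S_k}\subset D_{S_{Q(k)}}$; note that $D_{S_k}=[c_{S_k},c_{S_{Q(k)}}]$, not $[c_{S_k},c_1]$. When $Q(k)=0$ it comes from $c_{S_k+1}\ge c_2$.

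The gap is in step~2, and it leaves sufficiency without an argument. The upper inequality is \emph{not} automatic for $\nu$ built from an arbitrary $Q$ with $Q(k)\le k-1$. Take $Q(1)=0$, $Q(2)=1$, $Q(3)=Q(4)=0$. Then $S_0,\dots,S_4=1,2,4,5,6$ and $\nu=101100\cdots$. The sequences $\sigma^3(\nu)=100\cdots$ and $\nu=101\cdots$ first differ after the odd prefix $10$, so $\sigma^3(\nu)\succ_{\mathrm{pl}}\nu$. Your argument for non-cutting $n$ runs through $D_n=f(D_{n-1})$. When proving sufficiency there is no map $f$ and no tower yet, so that argument is circular. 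Relatedly, the ``bijection'' in step~1 is overstated. The sequence $10100\cdots$ has $R(2)=3$, which is not a cutting time, so it is not $\nu(Q)$ for any $Q$. You only need injectivity and $\rho(S_{k-1})=S_k$ for $\nu(Q)$.

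In fact the hypothesis is largely a statement about the upper inequality, not the lower one. Suppose $Q(k)\ge1$ and $n=S_{k-1}+S_{Q(k)-1}$. Applying the block rule to blocks $k$ and $Q(k)$ gives $\sigma^n(\nu)=e_1\cdots e_{S_{Q^2(k)}}\,\sigma^{S_k}(\nu)$. On the other hand $\nu=e_1\cdots e_{S_{Q^2(k)}}\,\sigma^{S_{Q^2(k)}}(\nu)$, and the common prefix has odd parity. Thus $\sigma^n(\nu)\preceq_{\mathrm{pl}}\nu$ holds if and only if the $k$-th lexicographic condition holds; in the example above, $k=2$ and $n=3$.

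What a proof of sufficiency must supply, and your proposal does not, is the propagation from these special comparisons to $\sigma(\nu)\preceq_{\mathrm{pl}}\sigma^n(\nu)\preceq_{\mathrm{pl}}\nu$ for \emph{every} $n$. For $S_{k-1}<n<S_k$ put $n'=n-S_{k-1}$. Then $\sigma^n(\nu)$ and $\sigma^{n'}(\nu)$ agree exactly in their first $S_{Q(k)}-n'-1$ symbols. This calls for an induction on $n$ with three cases:
\begin{itemize}
\item If $\rho(n')<S_{Q(k)}$, the comparison with $\nu$ is inherited from $n'$.
\item If $\rho(n')=S_{Q(k)}$, the agreement with $\nu$ extends past time $S_k$, and one must compare $\sigma^{S_k}(\nu)$ with $\sigma^{S_{Q(k)}-n'}(\nu)$. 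This is where the hypothesis enters.
\item If $\rho(n')>S_{Q(k)}$, the comparison is decided at time $S_k$ by a parity count.
\end{itemize}
Analogous bookkeeping is needed for the lower inequality. ``Sufficiency by checking all $n$ via the blocks'' is exactly this argument, which is the content of the theorem, and it is missing.
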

In the above theorem $Q^2 = Q \circ Q$. A family of unimodal maps is \emph{full} if every admissible kneading map occurs in the family. The logistic and real quadratic families are full; see \cite{MevS11,CorRL10}.


\subsection{Co-cutting times and the co-kneading map}
\label{subsec:cocutting_preliminaries}

We recall the definition of the co-cutting times and the co-kneading map; see \cite[Section~3.6.3, in particular Exercise~3.88 and Theorem~3.90]{Bruin2022_Top_and_erg_symb_dyn}.

Let $K(f)=e_1e_2e_3\cdots$ be a nonperiodic admissible kneading sequence. For every $m\geq1$, define the \emph{first disagreement time}
\begin{equation}
    \label{eq:first_symbolic_disagreement_time}
    R(m) \= \min\{r\geq1:e_{m+r}\neq e_r\}.
\end{equation}
Since $K(f)$ is nonperiodic, $R(m)$ is finite for every $m\geq1$. Equivalently, setting
\begin{equation}
    \label{eq:rho_disagreement_relation}
    \rho(m) \= m + R(m) =\min\{n>m:e_n\neq e_{n-m}\},
\end{equation}
the words $e_{m+1}\cdots e_{\rho(m)-1}$ and $e_1\cdots e_{\rho(m)-m-1}$ coincide, while $e_{\rho(m)} \neq e_{\rho(m)-m}$.

Let$\kappa \= \min\{n\geq2:e_n=1\}$ be the position of the second occurrence of the symbol $1$. This quantity is well defined for all kneading sequences considered below. The sequence of \emph{co-cutting times} $\tS_0<\tS_1<\tS_2<\cdots$ is defined by inductively by $\tS_0=\kappa$ and for $j \ge 0$
\begin{equation}
    \label{eq:cocutting_times_disagreement_recursion}
    \tS_{j+1} \= \rho(\tS_j) = \tS_j+R(\tS_j).
\end{equation}
Equivalently, $\tS_j=\rho^j(\kappa)$, for every $j \ge 0$.

For an admissible kneading sequence, the cutting times and co-cutting times are disjoint. Moreover, the difference between two consecutive co-cutting times is a cutting time. Therefore, for every $j\geq1$, there exists a unique $\tQ(j)\in\N_0$ such that
\[ \tS_j-\tS_{j-1} = S_{\tQ(j)}.\]
The map $\tQ\colon\N\longrightarrow\N_0$ is called the \emph{co-kneading map}. Using \eqref{eq:cocutting_times_disagreement_recursion}, we obtain, $R(\tS_{j-1}) = S_{\tQ(j)}$ and hence
\[ \tS_j = \tS_{j-1}+S_{\tQ(j)}.\]


\subsection{The principal nest}
\label{subsec:principal_nest_preliminaries}

An open interval $J \subset I$ is called \emph{nice} if $f^n(\partial J) \cap J = \emptyset$ for all $n \in \N$. Given a nice interval $J$, let 
\[\cD_J \= \{ x \in I\colon \text{ there exists } k \in \N \text{ such that } f^k(x) \in J \}.\]
A connected component $J'$ of $\cD_J$ is called an \emph{entry domain} to $J$. If $J' \subset J$ we shall also call it \emph{return domain} to $J$. For $x \in \cD_J$, the minimal positive integer $k = k(x)$ with $f^k(x) \in J$ is the \emph{entry time} of $x$ to $J$. If $x \in J$ we will also call $k$ the \emph{return time} of $x$ to $J$. Note that the entry time is constant on any entry domain.

Assume that the turning point $c$ is recurrent. Let $q > c$ denote the orientation-reversing fixed point of $f$. Let $\widehat{q} < c$ with $f(\widehat{q}) = f(q)$. Choose $I_0 \= (\widehat{q},q)$, and define inductively, $I_{n+1}$ to be the return domain of $I_n$ containing $c$. This gives a sequence of nice intervals
\[ I_0\supset I_1\supset I_2\supset\cdots\ni c,\]
called the \emph{principal nest}. Let $\cR_n$ be the first return map to $I_n$. The return at level $n$ is central if $\cR_n(c)\in I_{n+1}$ and noncentral otherwise. Let
\[ m(1)<m(2)<m(3)<\cdots\]
be the noncentral moments, indexed so that the return to $I_{m(k)-1}$ is noncentral, and put
\begin{equation}
    \label{eq:central_cascade_length_acim}
    L_k \= m(k+1)-m(k).
\end{equation}
Thus the corresponding cascade contains $L_k-1$ central returns.

We will say that $f$ display \emph{decay of geometry} if there are constants $C > 0$ and $\lambda > 1$  such that for every $k \ge 1$ we have
\[ \frac{|I_{m(k)}|}{|I_{m(k)+1}|} \ge C \lambda ^k. \]

\begin{theorem}[\cite{Shen2006_decay_of_geometry}]
    \label{thm:Shen_decay_of_geometry}
    Let $f$ be a nonrenormalizable $C^3$ unimodal map with critical order $\ell \in (1,2]$. Assume that $f$ has nonperiodic recurrent critical point. Then $f$ display decay of geometry. 
\end{theorem}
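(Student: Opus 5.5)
Since \Cref{thm:Shen_decay_of_geometry} is quoted from \cite{Shen2006_decay_of_geometry}, the paper will only invoke it. If I had to prove it, I would follow the Lyubich--Graczyk--\'Swi\k{a}tek strategy for the principal nest, adapted to $C^3$ maps with critical order $\ell\in(1,2]$. I would work with the scaling factors
\[ \lambda_n \= \frac{|I_{n+1}|}{|I_n|}, \]
and aim for a recursive inequality of the form $\lambda_{m(k+1)}\le \theta\,\lambda_{m(k)}$ with $\theta<1$, for all large $k$. This would give $|I_{m(k)}|/|I_{m(k)+1}|\geq C\lambda^k$.

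\textbf{Step 1: real bounds and distortion.} Since $f$ is nonrenormalizable and $c$ is recurrent and nonperiodic, there are infinitely many noncentral levels $m(k)$. For deep levels $n$, every branch of $\cR_n$ is a composition of a diffeomorphism with a power map $x\mapsto|x|^\ell$ near $c$. I would show that the diffeomorphic parts of these branches extend, with definite Koebe space, over $I_{n-1}$. For this I would use niceness of the $I_n$ together with the cross-ratio estimates available for $C^3$ maps without wandering intervals. In particular, I would rely on Kozlovski's result that deep first return maps have negative Schwarzian derivative. This gives bounded distortion for noncentral branches and uniform bounds on $\lambda_n$.

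\textbf{Step 2: cascades and the improvement step.} I would treat a central cascade $m(k),\dots,m(k+1)-1$ by comparing $\cR_{m(k)}|_{I_{m(k)+1}}$ with a unimodal map $\psi\circ|x|^\ell$ that is close to a saddle-node. The goal is to show that, however long the cascade, $\lambda$ at its end is bounded by a function of $\lambda$ at its start, with no loss.

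At a noncentral return, $\cR_{m}(c)$ lies in a domain $J\subset I_m\setminus I_{m+1}$. Pulling $I_{m+1}$ back through the power map gives roughly
\[ \lambda_{m+1}\lesssim \Bigl(\frac{|J|}{|I_m|}\Bigr)^{1/\ell}\cdot(\text{distortion}). \]
Combining this with the Koebe space from Step~1 and iterating over two noncentral levels should yield a definite contraction of the scaling factors.

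\textbf{Main obstacle.} The hardest part is this improvement step, which is exactly where the hypothesis $\ell\le2$ enters. For $\ell>2$ the estimate fails, as Fibonacci maps show, so the argument must exploit the convexity of $|x|^\ell$ with $\ell\le 2$. My first attempt would be to adapt Lyubich's argument for $\ell=2$: when $c$ returns close to the boundary of $I_m$, the preimage of a small interval under $|x|^\ell$ is relatively smaller than a linear pullback would give, and this produces the factor $\theta<1$. The other delicate point is obtaining uniform control over arbitrarily long cascades.
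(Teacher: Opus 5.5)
Your opening remark matches the paper: \Cref{thm:Shen_decay_of_geometry} is imported from Shen's work and used as a black box (together with the uniform-rate \Cref{cor:uniform_decay_fixed_critical_order}), and no proof is given there. Read as a proof, however, your sketch has a genuine gap at exactly the point that carries the content of the theorem. You announce the contraction $\lambda_{m(k+1)}\le\theta\lambda_{m(k)}$ (``should yield''), but nothing in Steps 1--2 produces a factor $\theta<1$. Consider the simplest case, with no central returns. There, Koebe space plus one pullback through $|x|^\ell$ gives only an estimate of the type
\[ \lambda_{m(k+1)} \lesssim C\,(\lambda_{m(k)}\lambda')^{1/\ell}. \]
Here $C>1$ is a distortion constant, and $\lambda'$ is the relative size of the noncentral domain visited by $\cR_{m(k)}(c)$, which can only be bounded in terms of earlier scaling factors. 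For $\ell=2$, with $x=\log(1/\lambda)$, this reads $x_{k+1}\ge\frac12(x_k+x')-\log C$. Constant sequences satisfy this inequality, so it forces no decay at all. For $\ell<2$ it forces decay only once the scaling factors are already small compared with the distortion constants. Your heuristic about pulling back small intervals under $|x|^\ell$ does not supply the missing gain. The interval that determines $\lambda_{m+1}$ is $\cR_m(I_{m+2})$, which has the critical value $\cR_m(c)$ as an endpoint. There the power map \emph{enlarges} ratios ($\e\mapsto\e^{1/\ell}$), as your own formula with the exponent $1/\ell$ already shows. The improvement for intervals away from the critical value is only a bounded factor, which by itself need not beat $C$. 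The same bookkeeping shows that a uniform one-step contraction is the wrong target. Since $\lambda_{m(k+1)}$ is controlled by two earlier factors, there is no reason for it to be smaller than an unusually small $\lambda_{m(k)}$. What one can prove, and what Lyubich proves for the corresponding moduli, is linear growth of $\log(1/\lambda_{m(k)})$; that requires an induction which remembers more than one level.

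Step 1 also overstates what is available. Along a long central cascade the central branch is close to a renormalizable map (e.g.\ a saddle-node), and consecutive intervals of the nest have nearly the same length. So the scaling factors are not bounded away from $1$; the paper's \Cref{lem:central_pullback_general_order} only gives $\mu_{n+1}\le K_\ell\mu_n^{1/\ell}$, a bound that degenerates along the cascade. At such levels there is therefore no definite Koebe space over $I_{n-1}$ and no uniform bound $\lambda_n\le 1-\delta$. Real bounds hold at the noncentral levels, with the extension taken over $I_{m(k)-1}$ as in \Cref{prop:principal_nest_extension_distortion}. For the same reason a cascade cannot be crossed ``with no loss'': the loss inside it is real, and one must show that the noncentral exit lands in a domain small enough to compensate. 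Three ingredients are missing from your outline: a starting condition, a sharp strict gain at $\ell=2$, and control of long cascades. These constitute Shen's theorem (and, in the complex setting, Lyubich's Gr\"otzsch-type estimates). Your outline defers all three, which is why the paper cites the result rather than reproving it.
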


The decay-of-geometry theorem is stated for an individual map. For the argument below, we also need the following uniformity consequence of its proof.

\begin{corollary}[Uniform decay rate at fixed critical order]
\label{cor:uniform_decay_fixed_critical_order}
Fix $\ell\in(1,2]$. There exists a constant $b_\ell>0$ with the following property. For every nonrenormalizable $C^3$ unimodal map $f$ of critical order $\ell$ with recurrent nonperiodic turning point, there exists an integer $k_0(f)\geq1$ such that
\begin{equation}
    \label{eq:uniform_noncentral_decay_fixed_order}
    \frac{|I_{m(k)}|}{|I_{m(k)+1}|} \ge \exp(b_\ell k)
\end{equation}
for every $k\geq k_0(f)$.
\end{corollary}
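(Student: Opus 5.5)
The plan is to read off the uniformity directly from the structure of Shen's proof of \Cref{thm:Shen_decay_of_geometry}, rather than from its statement. That proof is a two-stage argument.

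\textbf{Stage one.} One shows that the scaling factors $|I_{m(k)}|/|I_{m(k)+1}|$ eventually exceed some large constant. This follows from real bounds and the analysis of central cascades. Only this qualitative stage depends on the particular map $f$, through the level $k_0(f)$ at which the nest has become small enough. At that depth the relevant Koebe space and the nonlinearity of the branches of $\cR_{m(k)}$ are under control.

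\textbf{Stage two.} Once such a level is reached, one proves an inductive inequality of the form
\[ \frac{|I_{m(k+1)}|}{|I_{m(k+1)+1}|} \ge \Phi_\ell\Bigl(\frac{|I_{m(k)}|}{|I_{m(k)+1}|}\Bigr), \]
or an analogous inequality over a bounded number of noncentral steps. The function $\Phi_\ell$ depends only on $\ell$. It comes from:
\begin{enumerate}[(a)]
\item the Koebe principle, whose constants are universal once the space is large;
\item the local model $|x|^\ell$ at the turning point;
\item the negative Schwarzian / $C^3$ distortion bounds, which become asymptotically sharp as $|I_n|\to0$ by the contraction principle and the absence of wandering intervals.
\end{enumerate}

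With the recursion in hand, I would verify that $\Phi_\ell(t)\ge e^{2b_\ell}t$ for all $t$ beyond some threshold $t_\ell$. In Shen's argument the gain at each noncentral step is a definite multiplicative factor, coming from the $\ell\le2$ estimate on the central branch. Iterating from $k_0(f)$, where the ratio exceeds $t_\ell$, gives a geometric lower bound with a rate depending only on $\ell$. A possible $f$-dependent prefactor can then be absorbed by enlarging $k_0(f)$ and halving the rate. This yields $\exp(b_\ell k)$ for all $k\ge k_0(f)$.

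\textbf{Main obstacle.} The hard step is certifying that the multiplicative gain in the recursion really is independent of $f$. The $C^3$ nonlinearity of $f$ and the diffeomorphisms $\phi,\psi$ in the non-flat normalization could in principle enter the constants. I would handle this by noting that these quantities only contribute error terms of size $o(1)$ as $|I_{m(k)}|\to0$, and by choosing $k_0(f)$ so that these errors are below a fixed $\ell$-dependent tolerance. After that point, only the universal model estimates remain.
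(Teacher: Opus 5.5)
Your route is essentially the paper's: the paper gives no separate argument and simply asserts the corollary as a uniformity consequence of the proof of \Cref{thm:Shen_decay_of_geometry}, exactly as you propose, with all $f$-dependence confined to the starting level $k_0(f)$. Your sketch makes explicit the crux the paper leaves implicit. That crux is that the per-step gain in Shen's inductive estimate depends only on $\ell$ once the nonlinearity of $f$ and of the normal-form coordinates is below an $\ell$-dependent tolerance. As in the paper, this is asserted about the cited proof rather than verified.
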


Put
\begin{equation}
    \label{eq:principal_scaling_factor_acim}
    \mu_n \= \frac{|I_{n+1}|}{|I_n|}.
\end{equation}
The following theorem was proved by Bruin, Shen, and van Strien and it is the criterion we will use for the existence of an acip; see \cite[Theorem~2]{Bruin_Shen_van_Strien2003_Inv_meas_exist_wo_growth_condition}.

\begin{theorem}
\label{thm:Bruin_shen_vs_unimodal_acip_theorem}
For every $\ell>1$, there exists $\e(\ell)>0$ with the following property. Let $f$ be an $S$-unimodal map of critical order $\ell$ with a recurrent nonperiodic turning point, and let $(I_n)$ be its principal nest. If
\[ \frac{|I_{n+1}|}{|I_n|} \leq \e(\ell)\]
for every sufficiently large $n$, then $f$ admits an acip.
\end{theorem}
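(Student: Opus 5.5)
The statement to prove is \Cref{thm:Bruin_shen_vs_unimodal_acip_theorem}, which is quoted from Bruin, Shen and van Strien. The plan below sketches how one would reprove it directly. The strategy is to build an induced Markov map on a deep nice interval and show that its return time is integrable. Concretely, we fix a level $n_0$ beyond which $|I_{n+1}|/|I_n|\le\e$, and work with the first return map $\cR_{n}$ to $I_n$ for $n\ge n_0$.

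\textbf{Step 1: Koebe space and distortion.} Because each $I_n$ is nice and $I_{n+1}$ is small relative to $I_n$, every branch of the first entry map to $I_{n+1}$ that lands in $I_{n+1}$ extends monotonically onto a neighbourhood of $I_n$. Negative Schwarzian derivative together with the Koebe principle then gives bounded distortion on these branches. The key input is that the Koebe space is of order $1/\e$, so the distortion constant tends to $1$ as $\e\to0$.

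\textbf{Step 2: a recursive estimate for the tail of return times.} We estimate the Lebesgue measure of the set of points in $I_n$ whose first return to $I_{n+1}$ takes time larger than $t$. Each return to $I_n$ lands in $I_{n+1}$ with relative probability at least comparable to $|I_{n+1}|/|I_n|$, modified by the critical order $\ell$ through the central branch. The escape probability through one level is therefore bounded by a fixed fraction. Comparing return times across consecutive levels, we aim for an estimate of the form
\[ \mathbb E_{I_{n+1}}[\tau_{n+1}] \le C\,\mathbb E_{I_n}[\tau_n]\cdot \mu_n^{-\theta(\ell)} , \]
where $\theta(\ell)<1$ comes from the square-root-type distortion of the critical branch. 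The idea is that the small scaling factors $\mu_n$ make the deep levels reachable quickly relative to their size.

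\textbf{Step 3: integrability and the acip.} Summing the estimate of Step 2 over levels, one wants the product $\prod\mu_n^{1-\theta}$ to beat the growth, giving $\int\tau<\infty$ for the induced full-branched Markov map on $I_{n_0}$. Folklore results (Adler's condition via Koebe) then give an invariant density for the induced map. Spreading it by the return time produces a finite acip for $f$.

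The main obstacle is Step 2. The difficulty is controlling long central cascades, where the critical value stays in $I_{n+1}$ for many returns. To handle this, one would first renormalize the cascade by the saddle-node model and use that $\e$ small forces the cascade branches to be of comparable proportion. This is what yields the constant $\e(\ell)$.
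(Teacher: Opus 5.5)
The paper does not prove this statement; it quotes it from Bruin, Shen and van Strien. Your sketch therefore has to stand on its own, and it does not. Step~1 is standard and fine. Step~2, however, carries all the content and is only stated as an aim, and the mechanism you offer for an exponent $\theta(\ell)<1$ does not produce one.

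By your own Step~1, the non-central branches of $\mathcal R_n$ fill $I_n\setminus I_{n+1}$, which is all of $I_n$ except a $\mu_n$-fraction. They map onto $I_n$ with distortion close to $1$, so a return through them lands in $I_{n+1}$ with probability of order $\mu_n$. The fold improves this to order $\mu_n^{1/\ell}$ only for points of the central domain $I_{n+1}$. That set has relative measure $\mu_n$, so it cannot change the average. Your heuristic therefore gives $\mathbb E_{I_{n+1}}[\tau_{n+1}]\asymp \mathbb E_{I_n}[\tau_n]\,\mu_n^{-1}$, that is, $\theta=1$. With $\theta=1$ the product $\prod\mu_n^{1-\theta}$ is identically $1$ and the series in Step~3 diverges. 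Your sentence that the escape probability is bounded by a fixed fraction is also inconsistent with the landing probability of order $\mu_n$ you state just before it.

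This is a structural obstruction, not a lossy estimate. For an ergodic invariant probability $\nu$ with $\nu(I_n)>0$, Kac's formula gives $\int_{I_n}\tau_n\,d\nu=1$ for every $n$. Suppose the density of the acip you are trying to produce is bounded above and below near $c$. Then $|I_n|\,\mathbb E_{I_n}[\tau_n]=\int_{I_n}\tau_n\,dx\asymp 1$ at every level. Three things follow:
\begin{itemize}
\item consecutive mean return times genuinely differ by a factor $\asymp\mu_n^{-1}$;
\item an inequality with $\theta<1$ fails once $\varepsilon$ is small;
\item any bookkeeping that bounds an inducing time on $I_{n_0}$ by a sum over levels of $\int_{I_n}\tau_n\,dx$ sums terms of order one and diverges.
\end{itemize}

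What is missing is a genuinely full-branched Markov induced map on a fixed interval. The first return map to $I_{n_0}$ is not one, because its central branch is a fold. You also need a tail estimate for the inducing time of that map. For points of $I_n\setminus I_{n+1}$, this time is essentially how long they shadow the critical orbit before acquiring a monotone branch onto $I_{n_0}$ with Koebe space. It is thus governed by critical return times and derivative growth along the critical orbit, not by first-return times to deep levels, which are of order $|I_n|^{-1}$. Proving integrability of that inducing time, with the smallness of $\mu_n$ supplying Koebe space and measure decay, is the real content of the Bruin--Shen--van Strien theorem, and your plan does not address it.

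Finally, long central cascades are not the obstacle you identify. In the saddle-node, near-parabolic regime the scaling factors tend to $1$, and the hypothesis $\mu_n\le\varepsilon$ already excludes that regime.
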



\subsection{Principal-nest extensions and distortion}
\label{subsec:principal_nest_distortion}

We recall the pullback-extension, real-bounds, and distortion estimates used below. The extension is given in \cite[Corollary~2.3]{Shen2006_decay_of_geometry}, the required real bounds originate in the work of Martens \cite{Martens1994_Distortion_results_and_inv_Cantor_sets}, and the cross-ratio estimate in Kozlovski's \cite[Theorem~C]{Kozlovski2000_Getting_rid_of_Sd}.

Let $J\Subset T$ be intervals, and let $L$ and $R$ be the components of $T\setminus J$. We say that $T$ contains a \emph{$\tau$-scaled neighborhood} of $J$ if $|L|\geq\tau|J|$ and $|R|\geq\tau|J|$. 
Define
\[ \Cr(T,J) \= \frac{|T||J|}{|L||R|}.\]
If $h\colon T\to h(T)$ is a diffeomorphism, its \emph{cross-ratio distortion} on the pair $(T,J)$ is
\[ \Cr(h;T,J) \= \frac{\Cr(h(T),h(J))} {\Cr(T,J)}.\]

We shall use the following form of the real Koebe principle.

\begin{lemma}[Real Koebe principle]
    \label{lem:real_Koebe_principal_nest}
Let $\tau>0$ and $C\in(0,1]$ be constants. Let $J \Subset T$ be intervals, and let $h\colon T\to h(T)$ be a diffeomorphism. Suppose that
\[ \Cr(h;T',J') \geq C\]
for every pair of intervals $J'\Subset T'\subset T$. If $h(T)$ contains a $\tau$-scaled neighborhood of $h(J)$, then
\[ \frac{|h'(x)|}{|h'(y)|} \le C^{-6}\frac{(1+\tau)^2}{\tau^2}\]
for every $x,y\in J$.
\end{lemma}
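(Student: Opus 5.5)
The Real Koebe principle (\Cref{lem:real_Koebe_principal_nest}) is what we have to prove. The plan is to reduce to the case where $h$ is expanding on the whole of $J$, and then compare derivatives using the cross-ratio lower bound on well-chosen subintervals. Since the hypothesis is invariant under affine changes of coordinates in domain and range, we normalize so that $J$ and $h(J)$ have unit length. Fix $x<y$ in $J$. We then work with two collections of intervals in the domain: the two components $L,R$ of $T\setminus J$, and the shrinking intervals around $x$ and $y$.

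\textbf{Step 1 (one-sided comparison with the mean slope).} The goal is to bound $|h'(x)|$ from below by a multiple of $|h(J)|/|J|$, using the cross-ratio on the triples obtained from $T$, $J$ and degenerate intervals at $x$. Let $J'_\epsilon=(x,x+\epsilon)$ and $T'=(a,b)\supset J$ with $T'\subset T$. The hypothesis $\Cr(h;T',J'_\epsilon)\ge C$, in the limit $\epsilon\to0$, gives
\[
|h'(x)|\,\frac{|h(T')|}{|h(L')|\,|h(R')|}\ \ge\ C\,\frac{|T'|}{|L'|\,|R'|},
\]
where $L'=(a,x)$ and $R'=(x,b)$. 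Choosing $T'$ with endpoints those of $T$, and using that $h(T)$ contains a $\tau$-scaled neighborhood of $h(J)$, we get $|h(L')|,|h(R')|\ge \tau|h(J)|$. The nondegenerate cross-ratio inequality on $(T,J)$ should then give an upper bound on $|h(T)|$ relative to $|h(L')|\,|h(R')|$, up to a factor $(1+\tau)/\tau$. Thus $|h'(x)|\ge C^{a}\frac{\tau}{1+\tau}\frac{|h(J)|}{|J|}$.

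\textbf{Step 2 (the reverse bound).} By the mean value theorem there is $z\in J$ with $|h'(z)|=|h(J)|/|J|$. So it suffices to bound $|h'(y)|$ from above by a multiple of $|h'(z)|$. For this we apply the same infinitesimal cross-ratio inequality to the inverse configuration. Take the pair $(J,(y,y+\epsilon))$, or equivalently apply Step 1's inequality to the interval $[y,z]$ inside $J$ together with a small interval at $y$. The cross-ratio assumption for subpairs controls how much the derivative can increase. The standard argument (as in de Melo--van Strien, Theorem IV.1.2) yields $|h'(y)|\le C^{-b}\frac{1+\tau}{\tau}\,|h(J)|/|J|$.

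\textbf{Step 3 (bookkeeping of constants).} Multiplying the estimates of Steps 1 and 2 gives $|h'(y)|/|h'(x)|\le C^{-6}(1+\tau)^2/\tau^2$, once we check that the total number of cross-ratio applications contributes exponent $6$ on $C$. Each derivative bound should use three applications: one degenerate one at the point, and two to compare $|h(T)|$ and the side lengths.

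The main obstacle is Step 2. The cross-ratio hypothesis only bounds distortion from below, so bounding the derivative from above requires choosing subintervals cleverly. My first attempt will take the triple $T'=(x_-,y_+)$, $J'=(y-\epsilon,y+\epsilon)$. This expresses $|h'(y)|$ as at most $C^{-1}$ times a ratio of image lengths, and the $\tau$-scaled neighborhood then controls that ratio.
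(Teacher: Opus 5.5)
The paper quotes this lemma without proof, so your argument has to stand on its own. Both of its main steps fail as written.

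In Step~1, taking $T'=T$ in the infinitesimal inequality gives
\[ |h'(x)| \ge C\,\frac{|h(L')|\,|h(R')|}{|h(T)|}\Bigl(\frac{1}{|L'|}+\frac{1}{|R'|}\Bigr). \]
The image factor needs no further argument, since it is at least $\tfrac{\tau}{2}|h(J)|$. The domain factor $1/|L'|+1/|R'|$, however, is not comparable to $1/|J|$, because both components of $T\setminus J$ may be much longer than $J$.

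For instance, $h=\arctan$ has negative Schwarzian derivative and satisfies the hypothesis with $C=1$. Take $J=[-1,1]$ and $T=(-M,M)$. For every $M\ge 10$, $h(T)$ contains a $\tfrac25$-scaled neighbourhood of $h(J)$. Yet at $x=0$ the right-hand side equals $\arctan(M)/M\to 0$, while $|h(J)|/|J|=\pi/4$.

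The cross-ratio inequality for the pair $(T,J)$ cannot repair this. It only gives $|J|\bigl(1/|L|+1/|R|\bigr)\le C^{-1}\Cr(h(T),h(J))$, which is an upper bound on the domain factor, the wrong direction.

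Step~2 defers to the Koebe principle itself (de Melo--van Strien's theorem), which is circular. The configurations you actually propose also have the wrong sign. Whenever $J'$ is an infinitesimal interval at $y$ (as in $J'=(y-\epsilon,y+\epsilon)$, or ``a small interval at $y$''), $|h'(y)|$ sits in the numerator of $\Cr(h(T'),h(J'))$. So $\Cr(h;T',J')\ge C$ yields a \emph{lower} bound on $|h'(y)|$.

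Upper bounds come instead from making $y$ an endpoint of $J'$ and letting the adjacent complementary component shrink. Write $T=(a,b)$, $J=[p,q]$, $R=(q,b)$, and choose $T'=(y-\epsilon,b)$, $J'=(y,q)$. Letting $\epsilon\to 0$ gives
\[ |h'(y)|\le C^{-1}\,\frac{|h((y,b))|}{|h(R)|}\,\frac{b-q}{b-y}\,\frac{|h((y,q))|}{q-y}\le C^{-1}\,\frac{1+\tau}{\tau}\,\frac{|h((y,q))|}{q-y}. \]
The same holds symmetrically with $(p,y)$. Since $|h(J)|/|J|$ lies between the two one-sided slopes, this gives $|h'(y)|\le C^{-1}\frac{1+\tau}{\tau}\frac{|h(J)|}{|J|}$.

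For the lower bound, take $J'$ infinitesimal at $x$ but use the asymmetric $T'=(a,q)$ (resp.\ $(p,b)$). Then $|T'|/(|L'||R'|)\ge 1/(q-x)$ and $|h(L')|/|h(T')|\ge \tau/(1+\tau)$, so
\[ |h'(x)|\ge C\,\frac{\tau}{1+\tau}\,\frac{|h((x,q))|}{q-x}. \]
The larger of the two one-sided slopes is at least $|h(J)|/|J|$.

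Combining the two bounds gives $C^{-2}(1+\tau)^2/\tau^2\le C^{-6}(1+\tau)^2/\tau^2$. Your Step~3 count of ``three applications per side'' was chosen to match the target exponent, not obtained by an actual computation.
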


For a $C^1$ diffeomorphism $h\colon J\to h(J)$, we define its \emph{derivative distortion} on $J$ by
\[ \Dist(h|_J) \= \sup_{x,y\in J} \frac{|h'(x)|}{|h'(y)|}.\]
We say that $h$ has distortion bounded by $D\geq1$ on $J$ if
\[ \Dist(h|_J)\leq D.\]
Whenever the composition is defined, derivative distortion satisfies
\[ \Dist(g\circ h|_J) \leq \Dist(g|_{h(J)}) \Dist(h|_J).\]

For later use, we record the resulting bounded-distortion statement for the principal nest. Let $s_k$ be the first return time of the turning point $c$ to $I_{m(k)}$, and put $H_k \= f^{s_k-1}$.

\begin{proposition}
\label{prop:principal_nest_extension_distortion}
Fix $\ell\in(1,2]$. There exists a constant $D_\ell\geq1$ with the following property. For every nonrenormalizable $C^3$ unimodal map $f$ of critical order $\ell$ with recurrent nonperiodic turning point, there exists $k_*(f)\geq1$ such that, for every $k\geq k_*(f)$, the following holds. There exists an interval $U_k\supset f(I_{m(k)+1})$ such that $H_k\colon U_k\longrightarrow I_{m(k)-1}$ is a diffeomorphism and
\begin{equation}
    \label{eq:uniform_principal_nest_distortion}
    \sup_{x,y\in f(I_{m(k)+1})} \frac{|H_k'(x)|}{|H_k'(y)|} \leq D_\ell.
\end{equation}
\end{proposition}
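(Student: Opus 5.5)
The plan is to build $U_k$ by pulling back a definite neighbourhood of $I_{m(k)}$ inside $I_{m(k)-1}$ along the first return, and then to obtain \eqref{eq:uniform_principal_nest_distortion} from the real Koebe principle (\Cref{lem:real_Koebe_principal_nest}), using Kozlovski's cross-ratio estimate to control the nonlinearity and decay of geometry (\Cref{cor:uniform_decay_fixed_critical_order}) to supply the space.

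\textbf{Step 1 (the extension).} Since $I_{m(k)-1}$ is nice, the first return map $\cR_{m(k)-1}$ restricted to the return domain $I_{m(k)}$ has the form $f^{s}$ with $s$ the return time. Because the return to level $m(k)-1$ is noncentral, $\cR_{m(k)-1}(c)\in I_{m(k)-1}\setminus I_{m(k)}$. The intervals $I_{m(k)+1}\subset I_{m(k)}$ are nested accordingly, and $f^{s_k}$ maps $I_{m(k)+1}$ into $I_{m(k)}$.

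Pull back $I_{m(k)-1}$ along the orbit $f(c),\dots,f^{s_k}(c)$. By niceness, the first $s_k-1$ iterates of the component $U_k\ni f(c)$ of $f^{-(s_k-1)}(I_{m(k)-1})$ avoid $c$. Hence $H_k\colon U_k\to I_{m(k)-1}$ is a diffeomorphism onto its image, and this image is all of $I_{m(k)-1}$ by the maximality of the pullback. This is exactly the content of \cite[Corollary~2.3]{Shen2006_decay_of_geometry}, which I would invoke directly. Moreover, $U_k\supset f(I_{m(k)+1})$, since $H_k(f(I_{m(k)+1}))=f^{s_k}(I_{m(k)+1})\subset I_{m(k)}\subset I_{m(k)-1}$ and the pullback component is connected and contains $f(c)$.

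\textbf{Step 2 (space).} We need the image $H_k(f(I_{m(k)+1}))\subset I_{m(k)}$ to have a $\tau$-scaled neighbourhood inside $I_{m(k)-1}$, with $\tau$ depending only on $\ell$. By Martens' real bounds \cite{Martens1994_Distortion_results_and_inv_Cantor_sets}, the noncentral level $I_{m(k)}$ is well inside $I_{m(k)-1}$. Alternatively, \Cref{cor:uniform_decay_fixed_critical_order} gives $|I_{m(k)-1}|/|I_{m(k)}|\ge \exp(b_\ell (k-1))$, which is large for $k\ge k_*(f)$. Together with the fact that $I_{m(k)}$ is nearly centred at $c$ (symmetry of the nest up to the order-$\ell$ critical point), this gives that $I_{m(k)-1}$ contains a $\tau_\ell$-scaled neighbourhood of $I_{m(k)}$, say with $\tau_\ell=1$, once $k$ exceeds some $k_*(f)$.

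\textbf{Step 3 (cross-ratio control).} By Kozlovski \cite[Theorem~C]{Kozlovski2000_Getting_rid_of_Sd}, for $C^3$ maps with non-flat critical point there exist $\delta(f)>0$ and a function $\sigma\to0$ such that $\Cr(f^n;T',J')\ge 1-\sigma(\max|f^i(T')|)$ along diffeomorphic pullbacks, with the total loss bounded by $\exp(-C\sum_i|f^i(T')|)$. The iterates $f^i(U_k)$, $0\le i<s_k-1$, are pairwise disjoint: they are pullbacks of a nice interval before first return. Their total length is therefore at most $|I|$, and their maximal length tends to $0$ as $k\to\infty$, since $|I_{m(k)-1}|\to0$ and there are no wandering intervals. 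Hence for $k\ge k_*(f)$ we get $\Cr\ge C_0$ with $C_0$ universal, say $1/2$. The $f$-dependence is absorbed into $k_*(f)$ only.

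Applying \Cref{lem:real_Koebe_principal_nest} with $T=U_k$ and $J=f(I_{m(k)+1})$ then gives $D_\ell=C_0^{-6}(1+\tau_\ell)^2/\tau_\ell^2$.

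The main obstacle is uniformity. Both the space constant $\tau_\ell$ and the cross-ratio constant must be independent of $f$, with all $f$-dependence confined to the threshold $k_*(f)$. This is why I would use uniform decay (\Cref{cor:uniform_decay_fixed_critical_order}) together with smallness of the pullbacks, rather than any map-dependent bound.
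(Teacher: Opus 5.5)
Your outline follows the route the paper indicates. The paper gives no separate argument: it cites Shen's extension (Corollary~2.3), Martens' real bounds and Kozlovski's cross-ratio estimate, and then applies \Cref{lem:real_Koebe_principal_nest}. Your Step~1 is fine. However, the claim your cross-ratio bound rests on is false, and your Step~2 alternative is also wrong.

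\textbf{Step 3: the pieces $f^i(U_k)$ need not be disjoint.} You assert that the intervals $f^i(U_k)$, $0\le i<s_k-1$, are pairwise disjoint because they are pullbacks ``before first return''. But $s_k$ is the first return time to $I_{m(k)}$, not to $I_{m(k)-1}$. As you note yourself in Step~1, $\cR_{m(k)-1}(c)\in I_{m(k)-1}\setminus I_{m(k)}$. So the critical orbit enters $I_{m(k)-1}$ at least once, and possibly many times, before time $s_k$. The chain is therefore a concatenation of first-return chains, and pieces from different segments can be nested. For example, let $D$ be a noncentral return domain of $I_{m(k)-1}$ with return time $t'$. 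Suppose the critical orbit visits $D$ at times $t$ and $t+t'$ and then enters $I_{m(k)}$ at time $s_k=t+2t'$. Then $f^{t+t'-1}(U_k)=D$, while $f^{t-1}(U_k)=(f^{t'}|_D)^{-1}(D)\subset D$. Repeated visits give nested chains of arbitrary length. Hence $\sum_i|f^i(U_k)|\le|I|$ is unjustified, and without control of the total length a Kozlovski-type estimate of the form $\exp(-\sigma(\max_i|f^iT|)\sum_i|f^iT|)$ gives no uniform $C_0$. You need one of the following:
\begin{enumerate}
\item an extra argument controlling these nested pieces, for instance decomposing $H_k$ into branches of $\cR_{m(k)-1}$ and using real bounds to make the nested pullbacks shrink geometrically;
\item a cross-ratio lower bound that does not need disjointness. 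For the $S$-unimodal maps to which the paper actually applies the proposition, $Sf<0$ is preserved under composition and gives $\Cr(H_k;T',J')\ge 1$ directly.
\end{enumerate}

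\textbf{Step 2: the ``alternative'' is false, and uniformity comes from the wrong place.} \Cref{cor:uniform_decay_fixed_critical_order} at index $k-1$ bounds $|I_{m(k-1)}|/|I_{m(k-1)+1}|$, the ratio at the start of the $(k-1)$-st cascade. You need $1/\mu_{m(k)-1}$, and $m(k)-1=m(k-1)+L_{k-1}-1$ is the last level of that cascade. The two coincide only when $L_{k-1}=1$. At the exit of a long central cascade $\mu_{m(k)-1}$ can be of order one. This is exactly why \Cref{lem:central_pullback_general_order} and the arithmetic hypothesis of \Cref{thm:acim_rotation_like} are needed. So there is no exponential space, and $\tau_\ell=1$, which would require $\mu_{m(k)-1}\le 1/3, is not available. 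The space of $I_{m(k)}$ inside $I_{m(k)-1}$ after a noncentral return is only definite. Its independence of $f$ must come from the real bounds being universal at deep levels, where the nonlinearity of $f$ is negligible, not from uniform decay of geometry as your final paragraph suggests. With Martens' bound supplying some $\tau_\ell>0$, Step~2 goes through, and the Koebe constant is $C_0^{-6}(1+\tau_\ell)^2/\tau_\ell^2$ with that $\tau_\ell$.
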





\subsection{Continued fractions}
\label{subsec:continued_fraction_preliminaries}

For $\gamma\in(0,1/2)\setminus\Q$, write
\[ \gamma=[a_1,a_2,a_3,\ldots] = \cfrac{1}{a_1+\cfrac{1}{a_2+\cfrac{1}{a_3+\cdots}}}, \]
for its continued fraction expansion, and
\[ \frac{p_n}{q_n}=[a_1,\ldots,a_n]\]
for its convergents, with
\[ p_0=0, \quad q_0=1, \quad p_1=1, \quad q_1=a_1.\]
Then
\begin{equation}
    \label{eq:continued_fraction_recursion}
    p_{n+1}=a_{n+1}p_n+p_{n-1}, \qquad q_{n+1}=a_{n+1}q_n+q_{n-1},
\end{equation}
and 
\begin{equation}
    \label{eq:best_approx_convergents}
    \frac{p_{2n}}{q_{2n}} < \gamma < \frac{p_{2n+1}}{q_{2n+1}},
\end{equation}
for every $n \ge0$.


\subsection{Sturmian words}
\label{subsec:sturmian_words_preliminaries}

Let $u=u_1u_2u_3\cdots \in \{0,1\}^{\N}$ be an infinite binary word. For $n \in \N$, We denote by $\mathcal L_n(u)$ the collection of its subwords of length $n$ and by
\[ p_u(n) \= \#\mathcal L_n(u)\]
its word-complexity. For a finite binary word $v$, let $|v|_1$ denote the number of symbols $1$ occurring in $v$.

An infinite binary word $u$ is called \emph{Sturmian} if $p_u(n)=n+1$ for every $n\geq1$. The word $u$ is called \emph{balanced} if $\bigl||v|_1-|w|_1\bigr| \leq 1$ for every $n\geq1$ and all $v,w\in\mathcal L_n(u)$.

For an irrational number $\omega\in(0,1)$, the \emph{lower mechanical word} of slope $\omega$ is the binary word
\begin{equation}
    \label{eq:lower_mechanical_word}
    \underline{s}_{\omega}(k) \= \lfloor k\omega\rfloor - \lfloor(k-1)\omega\rfloor, \qquad k\geq1.
\end{equation}

We recall the following standard characterizations; see \cite[Definition~4.60, Lemmas~4.58 and~4.63, Theorem~4.66, Proposition~4.67, and Theorem~4.69]{Bruin2022_Top_and_erg_symb_dyn}.

\begin{proposition}[Sturmian characterizations]
\label{prop:sturmian_characterizations}
Let $u$ be an infinite binary word. The following statements are equivalent:
\begin{enumerate}[(1)]
    \item $u$ is Sturmian;
    \item $u$ is aperiodic and balanced;
    \item there exist an irrational number $\omega\in(0,1)$ and some $\beta\in\R$ such that $u$ is given by one of the mechanical codings
    \[ u_k = \lfloor k\omega+\beta\rfloor - \lfloor(k-1)\omega+\beta\rfloor \]
    or
    \[ u_k = \lceil k\omega+\beta\rceil - \lceil(k-1)\omega+\beta\rceil.\]
\end{enumerate}
Moreover, the slope $\omega$ is unique.
\end{proposition}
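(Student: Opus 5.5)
Since this proposition is quoted from \cite{Bruin2022_Top_and_erg_symb_dyn}, I would only reconstruct the standard Morse--Hedlund/Coven--Hedlund argument. I would prove the cycle $(3)\Rightarrow(2)\Rightarrow(1)\Rightarrow(2)\Rightarrow(3)$, and read off uniqueness of the slope as the frequency of the symbol $1$. Throughout, $h_k(u)$ denotes the number of $1$'s in $u_1\cdots u_k$.

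\emph{$(3)\Rightarrow(2)$.} For a mechanical coding, the number of $1$'s in the factor $u_{k+1}\cdots u_{k+n}$ telescopes to $\lfloor (k+n)\omega+\beta\rfloor-\lfloor k\omega+\beta\rfloor$ (or the ceiling analogue). This quantity lies in $\{\lfloor n\omega\rfloor,\lceil n\omega\rceil\}$, so $u$ is balanced. The same telescoping shows that $h_k(u)/k\to\omega$. If $u$ were eventually periodic, this frequency would be rational, contradicting $\omega\notin\Q$; hence $u$ is aperiodic.

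\emph{$(2)\Rightarrow(1)$ and $(1)\Rightarrow(2)$.} For the lower bound $p_u(n)\ge n+1$ I would use the Morse--Hedlund lemma. If $p_u(n)\le n$ for some $n$, then $p_u(m)=p_u(m+1)$ for some $m\le n$. In that case every factor of length $m$ has a unique right extension, which forces eventual periodicity. So aperiodicity gives $p_u(n)\ge n+1$; in particular $(1)$ implies that $u$ is aperiodic.

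For the upper bound under balance, suppose $p_u(n+1)-p_u(n)\ge2$ for some $n$. Then there are two distinct right special factors of length $n$. Taking their longest common suffix $w$, one obtains both $0w$ and $1w$ as factors, each right special. Extending on the right then produces $0w0$ and $1w1$, which differ by $2$ in their counts of $1$'s and contradict balance. Since $p_u(1)=2$, we get $p_u(n)=n+1$ for all $n$.

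For $(1)\Rightarrow$ balanced I would argue by contradiction. Take an unbalanced pair of factors of minimal length. Minimality forces the pair to have the form $0w0$ and $1w1$. I would then show that this forces two distinct right special factors of some length, so that $p_u(|w|+2)\ge |w|+4$. This combinatorial step, the palindrome/special-factor analysis, is the one I expect to be the main obstacle.

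\emph{$(2)\Rightarrow(3)$.} Balance gives $|h_{k+n}-h_k-h_n|\le1$, so the sequence $(h_n)$ is almost subadditive and almost superadditive. Fekete-type arguments then give a limit $\omega=\lim h_n/n$ with $|h_{k+n}-h_k-n\omega|\le 1$ for all $k,n$. Aperiodicity forces $\omega\notin\Q$; otherwise balance would force eventual periodicity.

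I would then set $\beta=\sup_k\,(h_k-k\omega)$, which is finite by the previous bound. Balance gives $h_k-k\omega\in(\beta-1,\beta]$ for every $k$, and hence $h_k=\lceil k\omega+\beta\rceil-1$ or $h_k=\lfloor k\omega+\beta\rfloor$. Which of these holds is decided by whether the supremum is attained. After normalising by subtracting $h_0$, this yields $u_k=h_k-h_{k-1}$ in one of the two mechanical forms. Finally, uniqueness of $\omega$ follows because $\omega$ is the frequency of $1$'s, which does not depend on the choice of representation.
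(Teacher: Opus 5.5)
The paper gives no proof of this proposition; it is quoted from Bruin's book. Your sketch therefore has to stand on its own as a reconstruction of the Morse--Hedlund/Coven--Hedlund theorem. Your arguments for $(3)\Rightarrow(2)$, $(2)\Rightarrow(1)$ and $(2)\Rightarrow(3)$ are essentially sound. The genuine gap is that the implication \emph{Sturmian $\Rightarrow$ balanced} is only announced. It is the one non-routine step, and without it the cycle does not close: nothing shows that $(1)$ implies $(2)$ or $(3)$.

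Your proposed mechanism is the right target, but it does not follow from counting at length $|w|+2$. Saying that both $0w$ and $1w$ are right special means that all four words $0w0,0w1,1w0,1w1$ are factors. Complexity $n+1$ permits at most three of them, and a configuration such as $\{0w0,1w1,1w0\}$ is compatible with a single right special factor $1w$ of length $|w|+1$. Excluding it needs global input: the palindromicity of $w$, which you must also extract from minimality, and aperiodicity.

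One way to supply the step is as follows.
\begin{enumerate}[(i)]
    \item Show that $u$ is recurrent. If some prefix of length $k$ occurred only once, then $p_{\sigma(u)}(n)=n$ for $n\ge k$. By Morse--Hedlund, $\sigma(u)$, hence $u$, would be eventually periodic.
    \item Describe the Rauzy graph of order $|w|$, which has $|w|+1$ vertices and $|w|+2$ edges. By recurrence, every vertex other than $w$ has in- and out-degree $1$. So the graph consists of two loops through $w$ of lengths $\ell_1+\ell_2=|w|+2$. They spell factors $wr_1$ and $wr_2$, with $r_1$ starting with $0$ and $r_2$ with $1$.
    \item Use the palindrome. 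Since $wr_i$ begins and ends with $w$, for $\ell_i\le|w|$ the loop leaves $w$ through the letter $w_{|w|+1-\ell_i}$ and re-enters it through $w_{\ell_i}$. These letters agree because $w$ is a palindrome. If $\ell_i=|w|+1$, the other loop is a self-loop, so $w=b^{|w|}$ and the property is immediate.
    \item Use aperiodicity. By (iii), $0w0$ and $1w1$ mean that each loop is immediately repeated somewhere. The sequence of loops is not eventually constant, so the switches $0w1$ and $1w0$ also occur. This gives $p_u(|w|+2)\ge|w|+4$, the desired contradiction.
\end{enumerate}

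Two smaller points need fixing.
\begin{enumerate}[(i)]
    \item In $(2)\Rightarrow(3)$, one has $h_k-k\omega\in(\beta-1,\beta]$ only when the supremum is attained. Otherwise one gets $[\beta-1,\beta)$, and that case is what produces the ceiling coding.
    \item The claim that a balanced word of rational slope $p/q$ is eventually periodic needs its own line. The quantity $h_{k+q}-h_k-p$ takes values either in $\{0,1\}$ or in $\{-1,0\}$. Its partial sums along each residue class modulo $q$ have absolute value at most $1$. Hence it vanishes for all large $k$.
\end{enumerate}
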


The following elementary consequence will be used below. We denote by $\preceq_{\rm lex}$ the lexicographical order.

\begin{lemma}
    \label{lem:lexicographically_minimal_sturmian}
Let $u$ be a Sturmian word of slope $\omega$ satisfying $u \preceq_{\rm lex} \sigma^m(u)$ for every $m\geq0$. Then $ |u_1u_2\cdots u_n|_1 = \lfloor n\omega\rfloor$ for every $n\geq1$. Consequently,
\[ u_n = \lfloor n\omega\rfloor - \lfloor(n-1)\omega\rfloor,\] 
for every $n\geq1$.
\end{lemma}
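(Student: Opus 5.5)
We prove \Cref{lem:lexicographically_minimal_sturmian} by comparing $u$ with the lower mechanical word of the same slope. \Cref{prop:sturmian_characterizations} says that every Sturmian word of slope $\omega$ is a mechanical coding
\[ u_k=\lfloor k\omega+\beta\rfloor-\lfloor (k-1)\omega+\beta\rfloor \quad\text{or}\quad u_k=\lceil k\omega+\beta\rceil-\lceil (k-1)\omega+\beta\rceil \]
for some $\beta$. Replacing $\beta$ by its fractional part, we may assume $\beta\in[0,1)$. The prefix counts are then
\[ |u_1\cdots u_n|_1=\lfloor n\omega+\beta\rfloor-\lfloor\beta\rfloor \quad\text{or}\quad \lceil n\omega+\beta\rceil-\lceil\beta\rceil. \]
Each of these is either $\lfloor n\omega\rfloor$ or $\lfloor n\omega\rfloor+1$. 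Write $\underline{s}_\omega$ for the lower mechanical word (the case $\beta=0$ with floors), whose prefix counts are exactly $\lfloor n\omega\rfloor$. It therefore suffices to show that $|u_1\cdots u_n|_1\le\lfloor n\omega\rfloor$ for every $n$. The final formula for $u_n$ then follows by taking differences of consecutive prefix counts.

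The key step is to show that $\underline{s}_\omega$ is the lexicographically smallest element of the orbit closure $X_\omega$ of any Sturmian word of slope $\omega$. Here $X_\omega$ is the set of all mechanical words of slope $\omega$ together with their limits. All Sturmian words of slope $\omega$ share the same language, and $X_\omega$ is the shift space with that language.

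Among words with the same language, the lexicographic order of prefixes of length $n$ is governed by the prefix counts. Suppose two mechanical words of slope $\omega$ agree up to position $j-1$ and then differ at position $j$. Then their prefix counts agree up to $j-1$, and the word with the smaller count at $j$ has a $0$ there and is lex-smaller.

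Now let $v\in X_\omega$. By balancedness of the language, every prefix count satisfies $|v_1\cdots v_n|_1\ge\lfloor n\omega\rfloor$. This holds because every factor of length $n$ has either $\lfloor n\omega\rfloor$ or $\lceil n\omega\rceil$ ones. Since $\underline{s}_\omega$ attains the minimum count $\lfloor n\omega\rfloor$ for every $n$ simultaneously, it is lex-minimal in $X_\omega$. Indeed, at the first position $j$ where $v$ differs from $\underline{s}_\omega$, the count of $v$ at $j$ is at least that of $\underline{s}_\omega$, so $v$ has $1$ there while $\underline{s}_\omega$ has $0$.

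Next we use the hypothesis that $u\preceq_{\rm lex}\sigma^m(u)$ for all $m$. The shifts $\sigma^m(u)$ are dense in $X_\omega$, since every Sturmian word is uniformly recurrent and its orbit closure is minimal. The lex order is closed on $\{0,1\}^{\N}$, so the hypothesis passes to limits and gives $u\preceq_{\rm lex} v$ for every $v\in X_\omega$. In particular $u\preceq_{\rm lex}\underline{s}_\omega$. Since $\underline{s}_\omega\in X_\omega$ is lex-minimal, we conclude $u=\underline{s}_\omega$. This gives $|u_1\cdots u_n|_1=\lfloor n\omega\rfloor$ for every $n\ge1$, as claimed.

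The main obstacle is justifying two facts. The first is that $\underline{s}_\omega$ lies in $X_\omega$. The case $\beta=0$ is itself a mechanical coding, and as $\beta\to0^+$ the words converge to it, so membership holds. The second is the lower bound on prefix counts. This is where the standard facts cited in \Cref{prop:sturmian_characterizations} enter: every factor of length $n$ of a Sturmian word of slope $\omega$ has $\lfloor n\omega\rfloor$ or $\lceil n\omega\rceil$ ones. This follows from balancedness together with frequency $\omega$.
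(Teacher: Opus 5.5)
Your proof is correct, but it takes a different route from the paper's.

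The paper works one length $n$ at a time and stays inside $u$. The hypothesis makes $u_1\cdots u_n$ lexicographically minimal among the length-$n$ factors of $u$. If the prefix had $\lceil n\omega\rceil$ ones, compare it with a factor $v$ having $\lfloor n\omega\rfloor$ ones. At their first difference $r$, minimality forces $u_r=0$ and $v_r=1$. The tails $u_{r+1}\cdots u_n$ and $v_{r+1}\cdots v_n$ then differ by two ones, which contradicts balance.

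You argue globally instead. The word $\underline{s}_\omega$ attains the minimal count $\lfloor n\omega\rfloor$ for every $n$ simultaneously, so it is the lexicographic minimum of the shift $X_\omega$. The hypothesis, together with closedness of $\{v: u\preceq_{\rm lex} v\}$ and density of the orbit of $u$, gives $u\preceq_{\rm lex}\underline{s}_\omega$, hence $u=\underline{s}_\omega$.

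Your route replaces the suffix-balance trick with the cleaner observation that pointwise-minimal prefix counts force lexicographic minimality. It also gives the slightly more conceptual conclusion that $\underline{s}_\omega$ is the unique lex-minimal point of the Sturmian shift of slope $\omega$.

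The price is an external input not contained in \Cref{prop:sturmian_characterizations}. You need every prefix of $\underline{s}_\omega$ to be a factor of $u$, that is, all Sturmian words of slope $\omega$ share one language. The paper uses only balance of $u$ and the existence of some length-$n$ factor of $u$ with $\lfloor n\omega\rfloor$ ones. That is a weaker fact: it follows from the frequency of ones being the irrational number $\omega$.

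Two parts of your write-up can be dropped. The reduction in your first paragraph to proving $|u_1\cdots u_n|_1\le\lfloor n\omega\rfloor$ is never used. Uniform recurrence and minimality play no role beyond the shared-language fact.
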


\begin{proof}
For every $m\geq0$, the hypothesis $u\preceq_{\mathrm{lex}}\sigma^m(u)$ implies that the prefix $u_1\cdots u_n$ is lexicographically no larger than the factor $u_{m+1}\cdots u_{m+n}$. Hence the prefix is lexicographically minimal among all factors of $u$ of length $n$.

Every factor of a Sturmian word of slope $\omega$ and length $n$ contains either $\lfloor n\omega\rfloor$ or $\lceil n\omega\rceil$ symbols $1$, and both possibilities occur. Suppose that the prefix $u_1\cdots u_n$ contains $\lceil n\omega\rceil$ symbols $1$, and choose a factor $v=v_1\cdots v_n$ containing $\lfloor n\omega\rfloor$ symbols $1$. Let $r$ be the first position at which these two words differ. Lexicographic minimality gives $u_r=0$, and $v_r=1$. The two words agree before position $r$, while the prefix contains one more symbol $1$ in total. It follows that
\[ |u_{r+1}\cdots u_n|_1 - |v_{r+1}\cdots v_n|_1 =2.\]
The two suffixes are factors of $u$ of the same length, contradicting balance. Therefore 
\[ |u_1\cdots u_n|_1=\lfloor n\omega\rfloor.\] 
Taking consecutive differences gives
\[ u_n = \lfloor n\omega\rfloor-\lfloor(n-1)\omega\rfloor.\]
\end{proof}


\section{Long-branched Sturmian kneading classes}
\label{sec:sturmian_classes}

Fix $\gamma\in(0,1/2)\setminus\Q$ and put
\[ \omega_\gamma \= \frac{\gamma}{1-\gamma}.\]
Define $Q_\gamma(0)=0$ and, for $k\geq1$,
\[ Q_\gamma(k) = \lfloor k\omega_\gamma\rfloor - \lfloor(k-1)\omega_\gamma\rfloor. \]
Observe that, since $\omega_{\gamma} \in (0,1)$, we have $Q_{\gamma}(k) \in \{0,1\}$ for every $k \in \N$.

\begin{proposition}
\label{prop:rotation_like_Q_is_admissible}
For every irrational $\gamma\in(0,1/2)$, the map $Q_\gamma$ is admissible. 
\end{proposition}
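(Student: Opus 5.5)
The plan is to verify directly the two conditions of \Cref{thm:kneading_map_admissibility} for $Q=Q_\gamma$, with $\omega=\omega_\gamma\in(0,1)$ irrational; recall that $Q_\gamma(0)=0$ by definition. Write $u=u_1u_2\cdots$ with $u_k=Q_\gamma(k)$, which is the lower mechanical word $\underline{s}_\omega$.

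\emph{First condition.} We have $Q_\gamma(1)=\lfloor\omega\rfloor-\lfloor 0\rfloor=0\le 0$. For $k\ge2$, we have $Q_\gamma(k)\in\{0,1\}$, so $Q_\gamma(k)\le 1\le k-1$.

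\emph{Reduction of the second condition.} Since $Q_\gamma$ takes only the values $0$ and $1$, we get $Q_\gamma^2(k)=Q_\gamma(Q_\gamma(k))\in\{Q_\gamma(0),Q_\gamma(1)\}=\{0\}$. Hence $Q_\gamma^2(k)=0$ for every $k\ge1$. The lexicographic condition therefore becomes
\[ \bigl(Q_\gamma(k+j)\bigr)_{j\ge1}\succeq_{\rm lex}\bigl(Q_\gamma(j)\bigr)_{j\ge1}, \]
that is, $\sigma^k(u)\succeq_{\rm lex}u$ for every $k\ge1$. So it only remains to show that the lower mechanical word of intercept $0$ is lexicographically minimal among its shifts.

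\emph{Shifts as mechanical words.} For $\beta\in[0,1)$, let $u^{\beta}_j=\lfloor j\omega+\beta\rfloor-\lfloor (j-1)\omega+\beta\rfloor$. Writing $k\omega=\lfloor k\omega\rfloor+\{k\omega\}$ and cancelling the integer part, we get $u_{k+j}=u^{\{k\omega\}}_j$. Thus $\sigma^k(u)=u^{\{k\omega\}}$, and $u=u^0$.

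\emph{Monotonicity in the intercept.} This is the only step needing care. Suppose $0\le\beta<\beta'<1$ and $u^\beta\neq u^{\beta'}$, and let $r$ be the first index where they differ. The partial sums satisfy $|u^\beta_1\cdots u^\beta_n|_1=\lfloor n\omega+\beta\rfloor\le\lfloor n\omega+\beta'\rfloor=|u^{\beta'}_1\cdots u^{\beta'}_n|_1$ for every $n$. At $n=r-1$ the two sums are equal, because the words agree before position $r$. At $n=r$ the first sum is at most the second, and the symbols differ, so $u^\beta_r=0$ and $u^{\beta'}_r=1$. Hence $u^\beta\prec_{\rm lex}u^{\beta'}$.

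\emph{Conclusion.} Taking $\beta=0$ and $\beta'=\{k\omega\}$ gives $u\preceq_{\rm lex}\sigma^k(u)$; indeed $\{k\omega\}>0$ by irrationality, so the inequality is strict or the words are equal. Both conditions of \Cref{thm:kneading_map_admissibility} hold, so $Q_\gamma$ is admissible.
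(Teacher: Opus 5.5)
Your proof is correct and follows essentially the same route as the paper. Both check $Q_\gamma(k)\le k-1$, use $Q_\gamma^2\equiv 0$ to reduce the condition to $\sigma^k(u)\succeq_{\rm lex}u$, and close with the partial-sum inequality $\lfloor n\omega_\gamma+\{k\omega_\gamma\}\rfloor\ge\lfloor n\omega_\gamma\rfloor$ plus the first-difference argument. The paper writes that inequality as $\lfloor (k+n)\omega_\gamma\rfloor-\lfloor k\omega_\gamma\rfloor\ge\lfloor n\omega_\gamma\rfloor$; your intercept-monotonicity lemma is only a repackaging of the same step.
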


\begin{proof}
Let $\gamma \in (0,1/2) \setminus \Q$. Since $0<\omega_\gamma<1$, we have $Q_{\gamma}(1) = 0$, and $Q_\gamma(k)\in\{0,1\}$ for every $k \ge 1$. Thus $Q_\gamma(k)\leq k-1$ for every $k\geq1$. Moreover, $Q_\gamma^2(k)=0$ for every $k\geq1$, because $Q_\gamma(0)=Q_\gamma(1)=0$.

Now, fix $m,n\geq1$. Telescoping gives
\[ \sum_{j=1}^{n}Q_\gamma(m+j) = \lfloor(m+n)\omega_\gamma\rfloor - \lfloor m\omega_\gamma\rfloor \geq \lfloor n\omega_\gamma\rfloor = \sum_{j=1}^{n}Q_\gamma(j). \]
If the two words coincide, the required lexicographical inequality is immediate. If the words $(Q_\gamma(m+j))_{j\geq1}$ and $(Q_\gamma(j))_{j\geq1}$ first differ at $j =r$, their partial sums agree up to $r-1$, while the preceding inequality forces the shifted word to have symbol $1$ and the original word symbol $0$ at position $r$. Hence
\[  \bigl(Q_\gamma(m+j)\bigr)_{j\geq1} \succeq_{\rm pl} \bigl(Q_\gamma(j)\bigr)_{j\geq1} =  \bigl(Q_\gamma(Q_\gamma^2(m)+j)\bigr)_{j\geq1},\]
 where the last equality comes from $Q_\gamma^2(m)=0$. The conclusion follows from \Cref{thm:kneading_map_admissibility}.
\end{proof}

Recall that the collection of all unimodal maps with kneading map $Q_\gamma$ is denoted by $\cS_\gamma$, and $\cS_{\gamma}^{\ell}$ denote the subclass of $\cS_{\gamma}$ consisting of $S$-unimodal maps of critical order $\ell$. By \Cref{prop:rotation_like_Q_is_admissible}, $\cS_\gamma$ has a representative in every full family of unimodal maps.

The cutting times and the kneading sequence follow directly from the definition. Recall that that the turning point is assumed to be a maximum.

\begin{proposition}
\label{prop:cutting_times_kneading_sequence_sturmian}
Let $\gamma \in (0,1/2) \setminus \Q$, and let $f\in\cS_\gamma$. Then
\begin{equation}
    \label{eq:explicit_cutting_times_rotation_like}
    S_k = 1+k+\lfloor k\omega_\gamma\rfloor = 1+\left\lfloor\frac{k}{1-\gamma}\right\rfloor, \qquad k\geq0.
\end{equation}
\end{proposition}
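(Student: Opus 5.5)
The plan is to derive the closed form from the kneading map recursion \eqref{eq:kneading_map_recursion} by induction on $k$, and then simplify it with an elementary floor identity.

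First, recall the normalization $c_2<c<c_1$, which gives $S_0=1$ and $S_1=2$. For $f\in\cS_\gamma$ we have $Q_\gamma(k)\in\{0,1\}$ for every $k\ge1$. Hence
\[ S_k-S_{k-1}=S_{Q_\gamma(k)}\in\{S_0,S_1\}=\{1,2\}, \]
and more precisely $S_k-S_{k-1}=1+Q_\gamma(k)$.

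Next, sum this from $1$ to $k$. The sum of the $Q_\gamma(j)$ telescopes, since $\lfloor 0\cdot\omega_\gamma\rfloor=0$, and we get
\[ S_k=S_0+k+\lfloor k\omega_\gamma\rfloor = 1+k+\lfloor k\omega_\gamma\rfloor, \qquad k\ge 0. \]
The case $k=0$ is trivial. As a check, $k=1$ gives $S_1=2$, consistent with $\omega_\gamma<1$.

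For the second expression, note that $k$ is an integer, so $k+\lfloor k\omega_\gamma\rfloor=\lfloor k(1+\omega_\gamma)\rfloor$. It then suffices to compute
\[ 1+\omega_\gamma=1+\frac{\gamma}{1-\gamma}=\frac{1}{1-\gamma}. \]

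There is no real obstacle here. The only point needing care is that the recursion \eqref{eq:kneading_map_recursion} uses $S_{Q(k)}$ with $S_0=1$ and $S_1=2$. This rests on the normalization assumed in \cref{subsec:kneading_preliminaries}, so that the increment is exactly $1+Q_\gamma(k)$.
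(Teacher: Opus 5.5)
Your proposal is correct and follows the paper's proof essentially verbatim: you use $S_{Q_\gamma(k)}=1+Q_\gamma(k)$ because $Q_\gamma(k)\in\{0,1\}$ and $S_0=1$, $S_1=2$, then sum the recursion with the telescoping floor sum. The final step uses $k+\lfloor k\omega_\gamma\rfloor=\lfloor k(1+\omega_\gamma)\rfloor$ together with $1+\omega_\gamma=1/(1-\gamma)$, exactly as in the paper.
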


\begin{proof}
Since $Q_\gamma(k)\in\{0,1\}$ and $S_0=1$, one has $S_{Q_\gamma(k)}=1+Q_\gamma(k)$. The recursion \cref{eq:kneading_map_recursion} therefore gives
\[ S_k = 1+\sum_{j=1}^{k}\bigl(1+Q_\gamma(j)\bigr) = 1+k+\lfloor k\omega_\gamma\rfloor = 1+\left\lfloor\frac{k}{1-\gamma}\right\rfloor.\]
\end{proof}

\begin{remark}
A way to obtain the kneading sequence from $Q_\gamma$ is by the variable-length coding
\begin{equation}
    \label{eq:block_morphism_Q_to_nu_maximum}
    0 \longmapsto 0, \qquad 1 \longmapsto 11,
\end{equation}
preceded by the initial symbol $1$. If we assume that the turning point of $f$ is a minimum, the corresponding coding is
\[ 0 \longmapsto 1, \qquad 1 \longmapsto 00,\]
preceded by $0$; see \cite[Section~5 and the proof of Proposition~2]{Anusic-Bruin-Cinic2020_top_properties_of_lorenz_maps}. 
\end{remark}

The word $Q_\gamma(1)Q_\gamma(2)\cdots$ is the lower mechanical word of slope $\omega_\gamma$ and, by \Cref{prop:sturmian_characterizations}, is therefore Sturmian. Moreover, if $\gamma=[a_1,a_2,a_3,\ldots]$, then
\begin{equation}
    \label{eq:continued_fraction_kneading_slope}
    \omega_\gamma = [a_1-1,a_2,a_3,\ldots].
\end{equation}
In particular, $Q_\gamma$ is bounded and every map in $\cS_\gamma$ is long-branched. Also,
\begin{equation}
    \label{eq:outside_rotation_limit}
    \lim_{k\to\infty}\frac{k}{S_k} = 1-\gamma.
\end{equation}
The three natural parameters are therefore related by
\begin{equation}
    \label{eq:parameter_dictionary_sturmian}
    \alpha=1-\gamma, \qquad \omega_\gamma=\frac{1-\alpha}{\alpha}, \qquad \gamma=\frac{\omega_\gamma}{1+\omega_\gamma}.
\end{equation}

We now prove \Cref{thm:equivalence_sturmian_long_branched}.

\begin{proof}[Proof of \Cref{thm:equivalence_sturmian_long_branched}]
Let $f$ be a unimodal map as in the statement of the theorem. Put
\[ w \= Q(1)Q(2)Q(3)\cdots.\]
Since $Q(k)\leq1$ and $Q$ takes values in $\N_0$, one has $Q(k)\in\{0,1\}$ for every $k\geq1$. Moreover, $Q(1)=0$, and hence $Q^2(k)=0$ for every $k\geq1$. The kneading-map admissibility condition therefore gives, for every $m\geq1$,
\[ \sigma^m(w) = \bigl(Q(m+j)\bigr)_{j\geq1} \succeq_{\mathrm{lex}} \bigl(Q(Q^2(m)+j)\bigr)_{j\geq1} = w.\]
Thus $w$ is lexicographically smaller than or equal to each of its shifts.

Suppose first that item~$(2)$ holds. Then $w$ is the lower mechanical word of the irrational slope $\omega$, and hence it is Sturmian. This proves item~$(1)$.

Conversely, suppose that $w$ is Sturmian. The kneading-map admissibility criterion therefore gives
\[ w\preceq_{\mathrm{lex}}\sigma^m(w)\]
for every $m\geq0$. By \Cref{lem:lexicographically_minimal_sturmian}, there is a unique irrational $\omega\in(0,1)$ such that
\[ Q(k) = \lfloor k\omega\rfloor-\lfloor(k-1)\omega\rfloor\]
for every $k\geq1$.

Finally, since $Q(j)\in\{0,1\}$, one has $S_{Q(j)}=1+Q(j)$. Hence
\[ S_k = 1+\sum_{j=1}^{k}S_{Q(j)} =1+k+\sum_{j=1}^{k}Q(j) = 1+k+\lfloor k\omega\rfloor.\]
Since $k$ is an integer, this is also
\[ S_k =1+\lfloor k(1+\omega)\rfloor,\]
as required.
\end{proof}

\begin{remark}
The binary word $Q_\gamma(1)Q_\gamma(2)\cdots$ is Sturmian, but it should not be confused with the full symbolic dynamics of the unimodal map. By \Cref{prop:cutting_times_kneading_sequence_sturmian}, the kneading sequence is reconstructed from this word by a variable-length coding. Moreover, the postcritical restriction is represented by the corresponding one-sided Sturmian shift; see \Cref{prop:Alvin_Cinc_outside_rotation}. The full unimodal shift, however, need not be Sturmian and may have positive entropy.
\end{remark}

Anu\v{s}i\'c, Bruin, and \v{C}in\v{c} associate to a symmetric unimodal map an increasing Lorenz map and, after introducing a plateau, a nondecreasing degree-one circle map. In the irrational case, the rotation number $\alpha\in(1/2,1)$ of this circle map determines a long-branched Sturmian kneading class; see \cite{Anusic-Bruin-Cinic2020_top_properties_of_lorenz_maps}, and \cite[Section~6.1 and Proposition~6.6]{alvin-Cinic2026-Recurrence_symbolic_dynamics_and_wild_attractors}. We call $\alpha$ the \emph{outside rotation number}. The following is proved in \cite[Section~6.1 and Proposition~6.6]{alvin-Cinic2026-Recurrence_symbolic_dynamics_and_wild_attractors}.

\begin{proposition}[Alvin--\v{C}in\v{c}]
\label{prop:Alvin_Cinc_outside_rotation}
Let $f$ belong to an irrational Sturmian class of outside rotation number $\alpha$. Then $Q(k) \le 1$ for every $k \ge 1$, and
\[ \alpha = \lim_{k\to\infty}\frac{k}{S_k}.\]
Moreover,
\[ f\colon\omega_f(c)\longrightarrow\omega_f(c) \]
is a noninvertible minimal surjection represented by the Sturmian shift of rotation number $\alpha$. Every irrational $\alpha\in(1/2,1)$ is realized by this construction.
\end{proposition}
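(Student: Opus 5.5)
The plan is to reduce everything to the kneading-theoretic description already established, rather than to work directly with the stunted Lorenz circle map. Concretely, I would first recall from \cite{Anusic-Bruin-Cinic2020_top_properties_of_lorenz_maps} how the kneading sequence of $f$ is read off from the itinerary of the plateau under the nondecreasing degree-one circle map of irrational rotation number $\alpha$. For an irrational rotation number this itinerary is a mechanical (hence Sturmian) coding of the rigid rotation by $\alpha$. Its image under the variable-length coding \eqref{eq:block_morphism_Q_to_nu_maximum}, preceded by the initial symbol $1$, is $K(f)$.

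First step: show $Q(k)\le 1$. Under \eqref{eq:block_morphism_Q_to_nu_maximum}, consecutive cutting times differ by $1$ or $2$, that is, by $S_0$ or $S_1$. Hence $Q(k)\in\{0,1\}$, and the word $Q(1)Q(2)\cdots$ is exactly the Sturmian coding of the rotation. Applying \Cref{thm:equivalence_sturmian_long_branched} then yields a slope $\omega$ with
\[ S_k=1+\lfloor k(1+\omega)\rfloor, \]
so that $k/S_k\to 1/(1+\omega)$. It remains to match $1/(1+\omega)$ with the rotation number $\alpha$. For this I would observe that the frequency of the symbol $0$ in the coding word equals the rotation number of the circle map, which is the identification made in \cite[Section~5]{Anusic-Bruin-Cinic2020_top_properties_of_lorenz_maps}. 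This gives $\alpha=\lim_k k/S_k$.

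Second step: the dynamics on $\omega_f(c)$. By the explicit formula, $c$ is recurrent and nonperiodic, and $\omega_f(c)=\overline{\cO_f(c)}$. The itinerary map $\iota$ sends this set into the orbit closure of $K(f)$ under $\sigma$. That orbit closure is the image under the block coding of the Sturmian shift of slope $\omega$, so it is minimal because Sturmian shifts are minimal. Injectivity of $\iota$ on $\omega_f(c)$ is where I would invoke the absence of wandering intervals and of attracting cycles for the maps under consideration. Indeed, two distinct postcritical points with equal itineraries would bound an interval on which every iterate is monotone, that is, a homterval, and this is excluded. Continuity of $\iota$ holds away from preimages of $c$, and these are handled by the usual two-sided-limit convention. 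Together these give a conjugacy $f|_{\omega_f(c)}\cong\sigma|_X$, and minimality and surjectivity transfer. Noninvertibility comes from the one point of the one-sided Sturmian shift with two preimages, namely the shift of the mechanical word. This corresponds to $c_1=f(c)$, which has the two $f$-preimages $c$ and a second point of $\omega_f(c)$ with the other itinerary.

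Final step: realization. For any irrational $\alpha\in(1/2,1)$, put $\gamma=1-\alpha\in(0,1/2)$. \Cref{prop:rotation_like_Q_is_admissible} shows that $Q_\gamma$ is admissible, and fullness of the quadratic family then provides a map with this kneading map. The main obstacle I expect is the second step, namely the genuinely topological injectivity of the itinerary on $\omega_f(c)$ for a general (non-smooth) unimodal map. In that generality one may only get a semiconjugacy, so I would state the conjugacy under the standing no-wandering-interval hypothesis and otherwise argue modulo collapsing homtervals.
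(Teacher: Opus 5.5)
The paper gives no proof of this proposition. It quotes it from \cite[Section~6.1 and Proposition~6.6]{alvin-Cinic2026-Recurrence_symbolic_dynamics_and_wild_attractors}. The only part of that dictionary it uses, in the proof of \Cref{prop:equivalence_Alvin_Cinc}, is that the cutting times are exactly the positions of the symbol $1$ in the \emph{time-indexed} coding $u_n=\lceil n\alpha\rceil-\lceil(n-1)\alpha\rceil$ of $R_\alpha$.

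Your first step gets this dictionary wrong, and the argument for $\alpha=\lim_k k/S_k$ fails. You identify $Q(1)Q(2)\cdots$ itself with the coding of the rotation by $\alpha$. Your claim that $K(f)$ is $1$ followed by the $\Phi$-image of that coding (with $\Phi(0)=0$, $\Phi(1)=11$) forces this identification, since $\{0,11\}$ is a prefix code. You then match $\alpha$ with the frequency of $0$ in that word. But that frequency is $1-\omega$, whereas $\lim_k k/S_k=1/(1+\omega)$, and $(1-\omega)(1+\omega)<1$. So the matching contradicts the conclusion you want. More generally, by uniqueness of the slope in \Cref{prop:sturmian_characterizations}, if $Q$ were a coding of $R_\alpha$ then $\omega\in\{\alpha,1-\alpha\}$. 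That gives $\lim_k k/S_k\in\{1/(1+\alpha),1/(2-\alpha)\}$, which equals $\alpha$ only at the golden mean.

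The missing idea is that $Q$ is the \emph{derived} (return-time) word of $u$, not $u$ itself. The gaps between consecutive $1$'s of $u$ are $S_k-S_{k-1}=S_{Q(k)}\in\{1,2\}$ because $\alpha>1/2$, which gives $Q\le 1$. Counting $1$'s gives $k/S_k\to\alpha$, and the slope of $Q$ is $\omega=1/\alpha-1$. Your realization step uses $\gamma=1-\alpha$, which is consistent only with this corrected dictionary, not with your Step~1. It also needs the dictionary in the converse direction: kneading map $Q_{1-\alpha}$ implies outside rotation number $\alpha$.

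In the second step, your identification of the non-injectivity point is false. Since $c_1$ is the maximum of $f$, $f^{-1}(c_1)=\{c\}$. Symbolically, $0K(f)$ is not in the orbit closure $X$. In $K(f)=1\,\Phi(Q(1))\Phi(Q(2))\cdots$ every $0$ is a whole block followed by $0$ or $11$, so the word $010=0e_1e_2$ never occurs. Hence the postcritical orbit accumulates on $c$ only from the side of $c_1$.

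This one-sidedness, transported by local homeomorphisms to the preimages of $c$ in $\omega_f(c)$, is exactly what makes $\iota$ single-valued and continuous on $\omega_f(c)$. The ``two-sided-limit convention'' does not supply it: with two-sided accumulation, $\sigma|_X$ would be a blow-up of $f|_{\omega_f(c)}$, not a conjugate. The doubled point is some $y\neq c_1$ whose two preimages $x_-<c<x_+$ both lie in $\omega_f(c)$; it corresponds to the infinite left-special word of $X$.

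Finally, even granting all this, your conjugacy is with $\sigma$ on $X$, which is a variable-length block image of the slope-$\omega$ shift. To reach the Sturmian shift of rotation number $\alpha$ you still need a recoding. One choice is $e_n=1-u_{n-1}u_n$ (with $u_0=0$). Its inverse sets $u_n=0$ exactly when $e_n=1$ and the distance from $n$ to the next $0$ is even.
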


The next proposition identifies their geometrically defined class with our intrinsic kneading class: the parameters are related by $\alpha=1-\gamma$, and the equivalence follows by comparing the corresponding cutting times and kneading maps.

\begin{proposition}
\label{prop:equivalence_Alvin_Cinc}
For every irrational $\gamma\in(0,1/2)$, the kneading class $\cS_\gamma$ coincides with the irrational long-branched Sturmian class of Anu\v{s}i\'c, Bruin, and \v{C}in\v{c} having outside rotation number $\alpha=1-\gamma$.
\end{proposition}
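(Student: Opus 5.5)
The plan is to show that the two classes are determined by the same kneading map. A kneading class is fixed by its kneading map $Q$, equivalently by its cutting times $S_k$. So it suffices to prove that a map in the Anu\v{s}i\'c--Bruin--\v{C}in\v{c} class of outside rotation number $\alpha=1-\gamma$ has kneading map $Q_\gamma$, and that the converse inclusion also holds.

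Let $f$ be in the class with outside rotation number $\alpha=1-\gamma$, where $\alpha\in(1/2,1)$ is irrational. By \Cref{prop:Alvin_Cinc_outside_rotation}, $Q(k)\le1$ for every $k$, and $\alpha=\lim_k k/S_k$. The same proposition says the postcritical dynamics $f|_{\omega_f(c)}$ is coded by the Sturmian shift of rotation number $\alpha$. I would use this to show that $w=Q(1)Q(2)\cdots$ is Sturmian, through the following chain:
\begin{itemize}
\item By the Remark after \Cref{prop:cutting_times_kneading_sequence_sturmian}, the kneading sequence is obtained from $w$ by the substitution $0\mapsto0$, $1\mapsto11$ after an initial $1$.
\item The kneading sequence is the itinerary of $c_1\in\omega_f(c)$, so it lies in the Sturmian shift of rotation number $\alpha$.
\item Desubstituting a Sturmian word by this injective block code preserves the Sturmian property. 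The reason is that an aperiodic balanced sequence in blocks $0$ and $11$ decodes to an aperiodic balanced sequence, by the standard fact that derivated sequences of Sturmian words are Sturmian.
\end{itemize}

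Once $w$ is Sturmian, \Cref{thm:equivalence_sturmian_long_branched} gives a unique irrational $\omega$ with $Q(k)=\lfloor k\omega\rfloor-\lfloor(k-1)\omega\rfloor$. It also gives $S_k=1+\lfloor k(1+\omega)\rfloor$, so $k/S_k\to 1/(1+\omega)$. Comparing with $\alpha=\lim k/S_k=1-\gamma$ yields $\omega=\gamma/(1-\gamma)=\omega_\gamma$. Hence $Q=Q_\gamma$ and $f\in\cS_\gamma$.

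For the reverse inclusion, the last sentence of \Cref{prop:Alvin_Cinc_outside_rotation} gives some unimodal map $g$ in their class with outside rotation number $1-\gamma$. By the first part, $g$ has kneading map $Q_\gamma$. Their class is a kneading class, defined through the kneading sequence (the Lorenz and circle-map construction depends only on it). Therefore every $f\in\cS_\gamma$ has the same kneading sequence as $g$ and lies in that class. The two collections therefore coincide.

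I expect the main obstacle to be the step showing that $w$ is Sturmian, that is, transferring the Sturmian coding of $\omega_f(c)$ through the variable-length coding back to $w$. If the derivated-sequence argument turns out to be delicate, I would instead check balance of $w$ directly. Counting $1$'s in $w$ over a window of $n$ cutting-time increments equals (length of the corresponding kneading block) minus $n$. Balance of the Sturmian kneading sequence bounds those lengths up to $\pm1$ uniformly, which gives balance of $w$. Aperiodicity of $w$ follows from the irrationality of $\alpha$ via $\lim k/S_k$.
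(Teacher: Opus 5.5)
There is a genuine gap, in exactly the step you flagged, and it cannot be fixed by more care. The claim that the itinerary of $c_1$ lies in the Sturmian shift of rotation number $\alpha$ is false.

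With $Q\le1$ the kneading sequence is $K(f)=1\,\Phi(Q(1))\Phi(Q(2))\cdots$, where $\Phi(0)=0$ and $\Phi(1)=11$. No such sequence in which both blocks recur can be aperiodic and balanced. Once $11$ occurs, balance forbids $00$. The interior maximal runs of $1$'s then all have even length, and balance allows only one run length, which forces eventual periodicity. Concretely, for $\gamma<1/3$ the kneading sequence contains both $00$ and $11$. Even its frequency of $1$'s is $2(1-\alpha)=2\gamma$, not $\alpha$.

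So the Sturmian representation of $f|_{\omega_f(c)}$ in \Cref{prop:Alvin_Cinc_outside_rotation} is not via itineraries with respect to $c$; it comes from the Lorenz/circle-map side of the construction. Your desubstitution step therefore has an empty hypothesis, since $0\mapsto0$, $1\mapsto11$ is not a Sturmian morphism. Your fallback fails for the same reason, because it also rests on balance of the kneading sequence. You also cannot do without Sturmian input altogether. The facts $Q\le1$ and $\lim_k k/S_k=\alpha$ only tell you, via admissibility, that $w$ is lexicographically minimal among its shifts with a prescribed frequency of $1$'s, and such words need not be balanced.

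The missing ingredient is the correct dictionary between the rotation and the unimodal combinatorics, and this is what the paper's proof uses. In the Anu\v{s}i\'c--Bruin--\v{C}in\v{c} construction, the cutting times are the positions of the symbol $1$ in the characteristic coding $u_n=\lceil n\alpha\rceil-\lceil(n-1)\alpha\rceil$ of $R_\alpha$. In other words, the Sturmian word is the indicator sequence of the cutting times, $u=1\,\Psi(Q(1))\Psi(Q(2))\cdots$ with $\Psi(0)=1$, $\Psi(1)=01$, rather than $K(f)$.

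From $\sum_{j\le n}u_j=\lceil n\alpha\rceil$ the paper reads off $S_k=\lceil k/\alpha\rceil=1+\lfloor k/\alpha\rfloor$. It then gets $Q(k)=S_k-S_{k-1}-1=\lfloor k\omega\rfloor-\lfloor(k-1)\omega\rfloor$ with $\omega=\gamma/(1-\gamma)$ directly, without \Cref{thm:equivalence_sturmian_long_branched}.

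If you replace $K(f)$ by $u$ in your chain, your route can be repaired. Since $\Psi$ is a Sturmian morphism, desubstituting $u$ does show that $w$ is Sturmian, and your slope matching through $\lim_k k/S_k$ then works. It is, however, a detour around a one-line computation. Your reverse-inclusion paragraph is fine and matches the last sentence of the paper's proof.
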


\begin{proof}
Fix an irrational $\gamma\in(0,1/2)$ and put
\[ \alpha=1-\gamma, \qquad \text{and} \qquad \omega=\frac{1-\alpha}{\alpha} =\frac{\gamma}{1-\gamma}.\]
By \Cref{prop:Alvin_Cinc_outside_rotation}, there exists an irrational
Sturmian class of outside rotation number $\alpha$.

For every $n \ge 1$, consider the characteristic coding of the rotation $R_\alpha$, given by
\[ u_n = \lceil n\alpha\rceil - \lceil(n-1)\alpha\rceil.\]
The cutting time $S_k$ is the position of the $(k+1)$-st occurrence of the symbol $1$ in this coding. Since
\[ \sum_{j=1}^{n}u_j = \lceil n\alpha\rceil,\]
they are given by $S_0 =1$, and for every $k \ge 1$
\[ S_k = \left\lceil\frac{k}{\alpha}\right\rceil = 1+\left\lfloor\frac{k}{\alpha}\right\rfloor.\]
By \Cref{prop:Alvin_Cinc_outside_rotation}, the kneading map takes only the values $0$ and $1$. Hence $S_{Q(k)} = 1+Q(k)$, and the kneading recursion gives $Q(k) =S_k-S_{k-1}-1$. Using $1/\alpha=1+\omega$, we obtain
\[ Q(k) = \left\lfloor\frac{k}{\alpha}\right\rfloor - \left\lfloor\frac{k-1}{\alpha}\right\rfloor -1 = \lfloor k\omega\rfloor - \lfloor(k-1)\omega\rfloor = Q_\gamma(k). \]
Thus the class of outside rotation number $\alpha=1-\gamma$ has kneading map $Q_\gamma$, and therefore defines the same kneading class $\cS_\gamma$.
\end{proof}

As an immediate consequence we obtain the following; see \cite[Proposition~6.7]{alvin-Cinic2026-Recurrence_symbolic_dynamics_and_wild_attractors}

\begin{corollary}
\label{cor:uniform_recurrence_long_branched_sturmian}
Let $f\in\cS_\gamma$ satisfy the contraction principle. Then the turning point of $f$ is nonperiodic and uniformly recurrent, and $\tQ(j)\longrightarrow+\infty$ as $j \to +\infty$.
\end{corollary}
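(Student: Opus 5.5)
The corollary bundles three claims: the turning point is nonperiodic, it is uniformly recurrent, and $\tQ(j)\to+\infty$. I will take them in that order, working combinatorially from the kneading map $Q_\gamma$ and using the contraction principle only to pass from combinatorics to the actual orbit of $c$.

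\emph{Nonperiodicity.} A periodic turning point has a periodic kneading sequence. Here $K(f)$ is $1$ followed by the image of the Sturmian word $Q_\gamma(1)Q_\gamma(2)\cdots$ under the coding \eqref{eq:block_morphism_Q_to_nu_maximum}. By \Cref{prop:cutting_times_kneading_sequence_sturmian}, the number of cutting times up to $n$ has density $1-\gamma$, which is irrational. A periodic kneading sequence would instead give eventually periodic cutting times, and hence a rational density. So $c$ is nonperiodic, and $R(m)<\infty$ for every $m$.

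\emph{Uniform recurrence.} By \Cref{prop:equivalence_Alvin_Cinc}, $f$ lies in the class of outside rotation number $\alpha=1-\gamma$. By \Cref{prop:Alvin_Cinc_outside_rotation}, $f|_{\omega_f(c)}$ is minimal and is represented by the Sturmian shift. I must check that $c\in\omega_f(c)$. Symbolically, the kneading sequence is uniformly recurrent: every prefix $e_1\cdots e_n$ reappears in $K(f)$ with bounded gaps, because Sturmian words are uniformly recurrent and a fixed substitution-type coding preserves this. Now apply the contraction principle. It implies there are no wandering intervals and no homtervals, so cylinders shrink to points, and $c$ is the unique point with its one-sided itineraries. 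Hence the cylinder neighbourhoods of $c$ defined by longer and longer prefixes shrink to $c$. The orbit of $c_1$ returns to each of them with bounded gaps, which gives uniform recurrence of $c$. This identification of shrinking cylinders with metric neighbourhoods is the one place where the contraction principle is genuinely needed.

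\emph{Divergence of $\tQ$.} I will use $\tS_j=\tS_{j-1}+S_{\tQ(j)}$ with $R(\tS_{j-1})=S_{\tQ(j)}$. It therefore suffices to show that $R(\tS_j)\to\infty$, that is, that the first-disagreement times along co-cutting times become unbounded. This is the main obstacle. Suppose instead that $\tQ(j)\le N$ for infinitely many $j$. Then at those co-cutting times, $f^{\tS_{j-1}}(c)$ disagrees with the orbit of $c$ within $S_N$ steps. My first attempt is to rule this out via the standard relation linking co-cutting times to closest returns, namely that $\tQ(j)\to\infty$ is equivalent to $\kappa$-type returns becoming deeper. Failing that, I will argue directly from the mechanical coding. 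The co-cutting times correspond to one-sided closest approaches of the rotation orbit $n\alpha$ to $0$, and these are indexed by continued-fraction denominators with intermediate steps. Between consecutive such approaches the agreement length grows like $q_n$. Concretely, I expect to show that $\tS_j$ runs through numbers of the form $q_{2n}+iq_{2n+1}$ or similar, and that $R(\tS_j)$ is comparable to $q_{2n}$, which tends to infinity. Alternatively, one can simply invoke \cite[Proposition~6.7]{alvin-Cinic2026-Recurrence_symbolic_dynamics_and_wild_attractors} through \Cref{prop:equivalence_Alvin_Cinc}, since the statement is exactly their conclusion transported to the class $\cS_\gamma$.
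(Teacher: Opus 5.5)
Your proposal is correct. For $\tQ(j)\to+\infty$ it coincides with the paper. The paper gives no argument of its own for this corollary: it obtains all three assertions at once by transporting \cite[Proposition~6.7]{alvin-Cinic2026-Recurrence_symbolic_dynamics_and_wild_attractors} to $\cS_\gamma$ through \Cref{prop:equivalence_Alvin_Cinc}, which is exactly your fallback. Your combinatorial sketch for this part is not a proof as written, and its guess is slightly off. By \Cref{thm:explicit_co_cutting_times_rotation_like}, $R(\tS_j)=\tS_{j+1}-\tS_j$ equals $q_{2m+1}$ inside the $m$-th block and $q_{2m+3}$ at its last index, so it is governed by the odd denominators rather than being comparable to $q_{2n}$; it still diverges. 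That theorem itself relies on the Alvin--\v{C}in\v{c} co-cutting correspondence, so in the paper's framework the citation is the actual proof of this part.

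For nonperiodicity and uniform recurrence you take a genuinely more intrinsic route. The irrational-density argument for nonperiodicity is fine. For uniform recurrence, your cylinder neighbourhoods are exactly the intervals $J_k$ of \Cref{lem:critical_cylinder_connected}. These shrink to $c$ by the no-homterval argument given after that lemma, and $c_m\in J_k$ whenever $R(m)\geq S_k$. So your argument is an early use of machinery the paper only develops in \cref{sec:non_existence_of_acip} and \cref{sec:non_Collet_Eckmann}.

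One step needs tightening: the recurrence of the prefix $e_1\cdots e_{S_n}=1\Phi(w_n)$. Because of the initial symbol $1$, uniform recurrence of $\Phi(Q_\gamma(1)Q_\gamma(2)\cdots)$ does not by itself give this. You need $1w_n$ to be a factor of the mechanical word, which follows for instance from \Cref{lem:frequency_of_one_followed_by_Sturmian_prefix}. Uniform recurrence of Sturmian words then gives bounded gaps between its occurrences. Finally, \cref{eq:factor_occurrence_gives_long_R} yields $R(S_j-1)>S_n$ on a syndetic set of $j$, and hence $c_m\in J_n$ for $m$ in a syndetic set.

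What your route buys is a self-contained proof of the first two assertions. It isolates the only use of the contraction principle, namely the shrinking of the $J_k$ to $c$, and it does not need the minimality statement in \Cref{prop:Alvin_Cinc_outside_rotation}. The paper's citation is shorter but entirely external.
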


We also note that every realization in this class without an essential periodic attractor is nonrenormalizable. Indeed, Bruin's renormalization criterion \cite[Proposition~1]{Bruin1995_Combinatorics_of_kneading_maps} states that, in the absence of an essential periodic attractor, renormalizability with period $S_k$ is equivalent to $ Q(k+j) \geq k$ for every $j\geq1$. This is impossible for $Q = Q_\gamma$. For $k\geq2$, it follows immediately from $Q_\gamma\leq1$, while for $k=1$ it follows from the fact that the irrational mechanical word $Q_\gamma(1)Q_\gamma(2)\cdots$ contains infinitely many zeros.


\subsection{Co-cutting times and convergent returns}
\label{subsec:co_cutting_convergent_returns}

In \cite[Proposition~6.7]{alvin-Cinic2026-Recurrence_symbolic_dynamics_and_wild_attractors} it was proved that the co-kneading map associated with a map $f \in \cS_{\gamma}$ diverges, and the turning point is uniformly recurren. Here we will compute the co-cutting times in term of the angle $\gamma$. Recall that 
\[ \gamma = [a_1, a_2, \ldots] \qquad \text{and} \qquad \frac{p_n}{q_n}= [a_1, \ldots, a_n], \]
are the continued fraction expansion and the $n$-th convergent of $\gamma$ respectively.

Put $A_0 = 0$, and for $m \ge 1$ put
\[ A_m=\sum_{r=1}^{m}a_{2r}.\]
Thus $A_m$ is the sum of the first $m$ even-indexed partial quotients of $\gamma$. We enumerate the co-cutting times as $\tS_0<\tS_1<\cdots$.

Under the correspondence with the Sturmian construction of Anu\v{s}i\'c, Bruin and \v{C}in\v{c}, the co-cutting times are precisely the lower one-sided record-return times of the rigid rotation $R_\gamma$; see \cite[Section~6.1]{alvin-Cinic2026-Recurrence_symbolic_dynamics_and_wild_attractors}.

\begin{theorem}
\label{thm:explicit_co_cutting_times_rotation_like}
Let $f\in\cS_\gamma$. For every $m\geq0$ and $1\leq j\leq a_{2m+2}$, we have
\begin{equation}
    \label{eq:explicit_co_cutting_times_rotation_like}
    \tS_{A_m+j-1} = q_{2m}+j q_{2m+1}.
\end{equation}
In particular,  
\begin{equation}
    \label{eq:terminal_and_next_co_cutting_times}
    \tS_{A_m-1}=q_{2m}, \qquad \tS_{A_m}=q_{2m}+q_{2m+1},
\end{equation}
for every $m\geq1$. Moreover, if $d_m \= q_{2m+1}-p_{2m+1}$, then
\begin{equation}
    \label{eq:odd_q_as_cutting_time}
        S_{d_m}=q_{2m+1}
\end{equation}
for every $m\geq0$, and
\begin{equation}
    \label{eq:disagreement_time_even_convergent}
        R(q_{2m})=q_{2m+1}
\end{equation}
for every $m\geq1$.
\end{theorem}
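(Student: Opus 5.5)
The plan is to turn the kneading sequence $K(f)=e_1e_2\cdots$ into an explicit rotation coding of angle $\gamma$. Once that is done, the first-disagreement times $R(m)$ become first-entrance times of the rigid rotation $R_\gamma$ into small intervals at the discontinuity, and these are classical continued-fraction quantities.

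\textbf{Step 1 (coding).} From \Cref{prop:cutting_times_kneading_sequence_sturmian} and the coding \eqref{eq:block_morphism_Q_to_nu_maximum}, I will write $e_n$ in closed form as
\[ e_n=\mathbf 1_{[\theta,1)}(\{n\gamma+\beta\}) \]
for a suitable interval and phase. Concretely, the symbol $0$ occurs exactly at positions $n$ with $n\notin\{S_k\}$ or $n-1\notin\{S_k\}$ in the appropriate pattern. Using $S_k=1+\lfloor k/(1-\gamma)\rfloor$, the non-cutting times are $1+\lceil k/\gamma\rceil$-type positions.

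The cleaner route is the dictionary in \Cref{prop:equivalence_Alvin_Cinc}, namely the characteristic coding $u_n=\lceil n\alpha\rceil-\lceil(n-1)\alpha\rceil$ with $\alpha=1-\gamma$. From it I expect $e_n$ to be determined by whether $\{n\gamma\}$ lies in $(0,\gamma)$ or $(\gamma,1)$, up to the orientation fixed by $e_1=1$, $e_2=0$. I will verify the formula on the first few symbols and then confirm it by checking that the two codings produce the same cutting times.

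\textbf{Step 2 (disagreement time as a return time).} With the coding in hand, $e_{m+r}\neq e_r$ means that the points $\{r\gamma\}$ and $\{r\gamma+m\gamma\}$ lie on different sides of a discontinuity of the coding. Equivalently, the arc of length $\|m\gamma\|$ between them, on the side determined by the sign of $\{m\gamma\}-1/2$, contains a discontinuity point.

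Hence $R(m)$ is the first $r\geq1$ such that $\{r\gamma\}$ falls in a one-sided interval of length $\|m\gamma\|$ adjacent to $0$ (or to $\gamma$). By \eqref{eq:best_approx_convergents}, the even convergents give returns $\{q_{2n}\gamma\}$ slightly below an integer, and the odd ones slightly above. So $\rho(m)$ is the next one-sided (lower) record return after $m$. This explains the description of the $\tS_j$ as lower one-sided record-return times.

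\textbf{Step 3 (induction over the continued fraction).} I will first compute $\kappa$ directly and check that $\tS_0=q_0+q_1=1+a_1$, which is the case $m=0$, $j=1$. I then induct along $m$ and $j$. If $n=q_{2m}+jq_{2m+1}$ with $j<a_{2m+2}$, then $\|n\gamma\|=\|q_{2m}\gamma\|-j\|q_{2m+1}\gamma\|$. By the best-approximation property, the first $r$ with $\{r\gamma\}$ landing in the relevant one-sided gap is $r=q_{2m+1}$. This gives $\rho(n)=n+q_{2m+1}$.

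At $j=a_{2m+2}$ one has $q_{2m}+a_{2m+2}q_{2m+1}=q_{2m+2}$, and the next step becomes $q_{2m+2}+q_{2m+3}$. This is the transition to block $m+1$, and it gives \eqref{eq:terminal_and_next_co_cutting_times} and \eqref{eq:disagreement_time_even_convergent} simultaneously. Finally, \eqref{eq:odd_q_as_cutting_time} follows from \Cref{prop:cutting_times_kneading_sequence_sturmian} via
\[ 1+\Bigl\lfloor \tfrac{q-p}{1-\gamma}\Bigr\rfloor=q \quad\text{for } p/q=p_{2m+1}/q_{2m+1}, \]
using $0<p-q\gamma<1-\gamma$, which holds because $p_{2m+1}/q_{2m+1}>\gamma$ and $|q\gamma-p|<1/q$.

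\textbf{Main obstacle.} The main difficulty is Step 2 with exact boundary conventions. I must make sure that the one-sided interval is the correct one: the lower side, matching the even convergents. I must also rule out earlier disagreements caused by the second discontinuity point $\gamma$ rather than $0$. I would handle this by shifting to the circle coordinate in which both discontinuities are images of one point under $R_\gamma$. The condition then reduces to a single three-gap/best-approximation statement: for $r<q_{2m+1}$, $\{r\gamma\}\notin(0,\|q_{2m}\gamma\|)$ on the relevant side.
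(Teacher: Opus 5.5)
\textbf{Step 1 is false as stated, and Step 2 is built on it.}

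Your route differs from the paper's. The paper never writes a rotation coding of $K(f)$. It cites Alvin--\v{C}in\v{c} for the fact that co-cutting times are the lower one-sided record returns of $R_\gamma$, and then only does the Farey bookkeeping. Your plan would prove that identification directly from $S_k=1+\lfloor k/(1-\gamma)\rfloor$. Your Step~3 induction and your argument for $S_{d_m}=q_{2m+1}$ are fine.

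\emph{Why the coding fails.} In $K(f)$ each $Q_\gamma(k)=1$ contributes $11$ and each $Q_\gamma(k)=0$ contributes $0$. So the symbol $1$ has frequency $2\omega_\gamma/(1+\omega_\gamma)=2\gamma$. By equidistribution, a coding by the partition $(0,\gamma),(\gamma,1)$, with any phase, gives frequency $\gamma$ or $1-\gamma$, never $2\gamma$. Concretely, for $\gamma_*$ one has $e_3=e_4=1$, while $\{3\gamma_*\}<\gamma_*<\{4\gamma_*\}$.

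\emph{The correct coding.} The symbol $0$ occurs exactly when both $n-1$ and $n$ are cutting times; what you wrote describes the symbol $1$. The cutting-time formula shows that $n$ is a cutting time if and only if $\{(n-1)\gamma\}<1-\gamma$. Hence $e_n=1$ exactly when $\{n\gamma\}\in(0,2\gamma)$, and $e_n=0$ exactly when $\{n\gamma\}\in[2\gamma,1)$. The discontinuities are therefore $0$ and $2\gamma=R_\gamma^2(0)$, not $0$ and $\gamma$.

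\emph{Orientation.} Your orientation is also reversed. By the convergent inequalities, $\{q_{2n}\gamma\}=q_{2n}\gamma-p_{2n}$ is small and positive, so even convergents sit just above an integer and odd ones just below.

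\textbf{How Step 2 goes through after the fix.} Let $\delta=\{N\gamma\}<\min\{\gamma,1-2\gamma\}$. Then $e_{N+r}\neq e_r$ exactly when $\{r\gamma\}\in(1-\delta,1)$, or when $r\geq3$ and $\{(r-2)\gamma\}\in(1-\delta,1)$.
\begin{itemize}
\item The case $r=1$ is excluded by $\delta<\gamma$.
\item The case $r=2$ gives agreement precisely because $2\gamma$ lies on the closed $0$-side.
\end{itemize}
So $R(N)$ is the first entrance time $\tau$ into $(1-\delta,1)$; the crossing of $2\gamma$ comes only at $\tau+2$. Equivalently, $\rho(N)=\min\{n>N:\{n\gamma\}<\{N\gamma\}\}$. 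Your induction then closes as follows.
\begin{itemize}
\item For $0<r<q_{2m+1}$, Lagrange's theorem gives $1-\{r\gamma\}\geq\|r\gamma\|\geq\|q_{2m}\gamma\|\geq\delta$, so no earlier entrance occurs.
\item For $N=q_{2m}+jq_{2m+1}$ with $0\leq j<a_{2m+2}$, one has $\delta=(a_{2m+2}-j)\delta_{2m+1}+\delta_{2m+2}>\delta_{2m+1}$, so $r=q_{2m+1}$ enters.
\end{itemize}

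\textbf{The first block needs a direct check.} The smallness hypothesis holds in every block $m\geq1$, since $\delta\leq\delta_{2m}<\delta_1=1-a_1\gamma\leq1-2\gamma$. It can fail in the first block $N=1+ja_1$ when $\gamma$ is near $1/2$. For example, $\gamma=[2,3,1,1,\ldots]$ gives $\{3\gamma\}>1-2\gamma$. Then an arc can contain both discontinuities, and ``the arc contains a discontinuity'' is the wrong criterion. There you should verify $R(1+ja_1)=a_1$ directly. This is immediate because $\{r\gamma\}=r\gamma$ for $r\leq a_1$.
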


\begin{proof}
For $m\geq0$ and $1\leq j\leq a_{2m+2}$, put
\[ N_{m,j} \= q_{2m}+j q_{2m+1}, \qquad \text{and} \qquad P_{m,j} \= p_{2m}+j p_{2m+1}.\]
Set
\[ \delta_{2m} \= q_{2m}\gamma-p_{2m}>0, \qquad \text{and} \qquad \delta_{2m+1} \= p_{2m+1}-q_{2m+1}\gamma>0.\]
The continued-fraction recursion gives $\delta_{2m} = a_{2m+2}\delta_{2m+1} + \delta_{2m+2}$. Consequently,
\[ N_{m,j}\gamma-P_{m,j} = \delta_{2m}-j\delta_{2m+1} = (a_{2m+2}-j)\delta_{2m+1} + \delta_{2m+2} >0.\]
Thus $P_{m,j}/N_{m,j}<\gamma$, and these approximation errors decrease
strictly as $j$ increases.

For every $1\leq j\leq a_{2m+2}$, the fractions
\[ \frac{P_{m,j}}{N_{m,j}} = \frac{p_{2m}+j p_{2m+1}} {q_{2m}+j q_{2m+1}}, \]
are precisely the intermediate, or Farey, convergents to $\gamma$ from below. By the best-approximation property of the Farey convergents, they give, in increasing order of their denominators, the successive lower one-sided record returns of $R_\gamma$.
By the co-cutting correspondence recalled above, these denominators are therefore the successive co-cutting times.

The block indexed by $m$ contains $a_{2m+2}$ terms. Since the number of terms in the preceding blocks is
\[ \sum_{r=0}^{m-1}a_{2r+2} = A_m,\]
the $j$-th term of the $m$-th block has index $A_m+j-1$. Hence \cref{eq:explicit_co_cutting_times_rotation_like} holds.

Taking $j=a_{2m+2}$ and using the continued-fraction recursion gives
\[ q_{2m}+a_{2m+2}q_{2m+1} =q_{2m+2}.\]
Taking $j=a_{2m+2}$ gives $\widetilde S_{A_{m+1}-1} = q_{2m+2}$. Replacing $m+1$ by $m$ yields $\widetilde S_{A_m-1}=q_{2m}$. Taking $j=1$ in the block indexed by $m$ gives $\widetilde S_{A_m}=q_{2m}+q_{2m+1}$.

We next prove \cref{eq:odd_q_as_cutting_time}. Put $\varepsilon_{2m+1} \= p_{2m+1}-q_{2m+1}\gamma$.
The convergent estimate gives
\[ 0<\varepsilon_{2m+1} < \frac{1}{q_{2m+2}} < 1-\gamma.\]
Since $d_m=q_{2m+1}-p_{2m+1}$, it follows that
\[ \frac{d_m}{1-\gamma} = q_{2m+1} - \frac{\varepsilon_{2m+1}}{1-\gamma} \in (q_{2m+1}-1,q_{2m+1}).\]
Using the cutting-time formula
\[ S_k = 1+\left\lfloor\frac{k}{1-\gamma}\right\rfloor\]
with $k=d_m$, we obtain $S_{d_m}=q_{2m+1}$. 

Finally, if $m\geq1$, then $q_{2m}$ is a co-cutting time and
\cref{eq:terminal_and_next_co_cutting_times} shows that the next one is
$q_{2m}+q_{2m+1}$. Since consecutive co-cutting times satisfy $\tS_{j+1} = \tS_j+R(\tS_j)$, we conclude \eqref{eq:disagreement_time_even_convergent}.
\end{proof}

\begin{example}[The golden-mean case]
Let
\[ \gamma_* = \frac{3-\sqrt5}{2} = [2,1,1,\ldots], \qquad \text{and} \qquad \varphi = \frac{1+\sqrt5}{2},\]
and let $\{F_k\}_{k \ge0}$ be the sequence of Fibonacci number with $F_0 = 0$, and $F_1 = 1$.
For the long-branched Sturmian class of angle $\gamma_*$, the cutting times
are given by
\[ S_k = 1+\lfloor k\varphi\rfloor,\]
and hence $S_k\sim\varphi k$. Thus the cutting times grow linearly. By contrast, \Cref{thm:explicit_co_cutting_times_rotation_like} shows that the co-cutting times corresponding to the deepest returns are $\tS_{m-1} = q_{2m} = F_{2m+2}$, for every $m \ge 1$. Consequently,
\[ \tS_{m-1} \sim \frac{\varphi^{2m+2}}{\sqrt5},\]
so these deep returns occur along an exponentially growing Fibonacci subsequence.

For the Fibonacci map, on the other hand, we have $Q_{\rm Fib}(k) = \max\{k-2,0\}$, and the cutting times themselves satisfy 
\[ S_k = F_{k+2} \sim \frac{\varphi^{k+2}}{\sqrt5};\]
see \cite{Lyubich-Milnor1993_Fib_unimodal} and \cite[Example~3.84]{Bruin2022_Top_and_erg_symb_dyn}. Hence the cutting times grow exponentially in the Fibonacci case, whereas they grow linearly in the long-branched Sturmian case. The Fibonacci structure occurs at different combinatorial levels, in the former it appears in the cutting times themselves, while in the latter it appears in the co-cutting times and the continued-fraction denominators.

Despite this difference, both postcritical systems are semiconjugate to the golden-mean rotation. Equivalently, in the convention used for the present class one may use $R_{\gamma_*}$, since the rotations of angles $\gamma_*$ and $1-\gamma_*$ are conjugate by orientation reversal. Thus the two maps have the same irrational rotation factor but are not kneading-equivalent, the present kneading map is bounded and the map is long-branched, whereas the Fibonacci kneading map diverges.
\end{example}


\subsection{The quadratic representative and Bl\'e's construction}
\label{subsec:quadratic_realization_Ble}

We now identify the quadratic representatives of the long-branched Sturmian classes with the real quadratic maps constructed by Bl\'e in \cite{Ble_2204_external_arguments_and_invariant_measures}. More precisely, for every irrational $\gamma\in(0,1/2)$, we will prove that the map constructed by Bl\'e from the rigid rotation of angle $\gamma$ is exactly the representative of $\cS_\gamma$ in the real quadratic family $P_t(x)=x^2+t$. Thus Bl\'e's construction gives a geometric realization of the quadratic members of the intrinsic kneading classes considered here.

There is a minor difference of conventions that must first be made explicit. Throughout this paper our unimodal maps have a maximum turning point, whereas $P_t$ has a minimum at $0$. Passing from one convention to the other by an orientation-reversing affine conjugacy exchanges the two sides of the turning point. Consequently, the binary kneading sequence is complemented ($0$'s should be read as $1$'s and $1$'s as $0$'s), while the cutting times and the kneading map remain unchanged.

In particular, if $\nu_\gamma^{\max}$ denotes the kneading sequence associated with $Q_\gamma$ in our maximum convention, then
\[ \nu_\gamma^{\max} = 1\, \Phi_{\max}(Q_\gamma(1)) \Phi_{\max}(Q_\gamma(2))\cdots,\]
where
\[ \Phi_{\max}(0)=0, \qquad \text{and} \qquad \Phi_{\max}(1)=11.\]
For a minimum turning point the corresponding sequence is
\begin{equation}
    \label{eq:min_kneading_sequence_Q_gamma}
    \nu_\gamma^{\min} = 0\, \Phi_{\min}(Q_\gamma(1)) \Phi_{\min}(Q_\gamma(2))\cdots,
\end{equation}
where
\begin{equation}
    \label{eq:min_block_coding_Q_gamma}
    \Phi_{\min}(0)=1, \qquad \text{and} \qquad \Phi_{\min}(1)=00.
\end{equation}
Thus $\nu_\gamma^{\max}$ and $\nu_\gamma^{\min}$ determine the same cutting times and the same kneading map $Q_\gamma$.

The admissibility of $Q_\gamma$ and fullness of the real quadratic family give a quadratic realization. Since the corresponding kneading sequence is aperiodic, real quadratic rigidity implies uniqueness; see \cite{Kozlovski-She_van_Strien2007_rigidity_for_real_polynomials}. We denote this parameter by $t_\gamma\in[-2,-1]$. Thus the maximum-type affine conjugate of $P_{t_\gamma}$ belongs to $\cS_\gamma$.

Our identification with Bl\'e's construction will be made through external angles. We will compute the characteristic external angle of the quadratic representative $P_{t_\gamma}$ directly from $Q_\gamma$, and show that it coincides with the angle used by Bl\'e; see \cite[Section~3.2]{Ble_2204_external_arguments_and_invariant_measures}. We use throughout the standard correspondence between real quadratic kneading data and external angles; see \cite{Douady1986_algorithm_for_computing_external_angles} for the general construction and the formulas underlying the computations below.

We first recall the kneading-to-external-angle correspondence in the form that we need. For convenience, write $ \nu_\gamma^{\min} = e_1 e_2 e_3\cdots$, where $e_1=0$. For every $j\geq0$, let $\varepsilon_j\in\{-1,+1\}$ be the orientation of the lap containing the $j$th iterate of the critical orbit, thus
\begin{equation}
    \label{eq:lap_orientation_sign}
    \varepsilon_j =
    \begin{cases}
        -1, \text{ if } e_j=0,\\
        +1, \text{ if } e_j=1.
    \end{cases}
\end{equation}
Indeed, $P_t$ is decreasing on the left of $0$ and increasing on the right of $0$. Hence
\[ \Pi_n = \prod_{j=1}^{n}\varepsilon_j\]
is the orientation of the monotonicity branch of $P_t^n$ following the first $n$ symbols of the critical itinerary. Thus, $\Pi_n=+1$ for an increasing branch and $\Pi_n=-1$ for a decreasing branch.

Choose $b_1=1$ and define $b_{n}\in\{0,1\}$, for $n > 1$, by $\Pi_n=(-1)^{b_{n+1}+b_1}$. Equivalently,
\begin{equation}
    \label{eq:consecutive_angle_digits_kneading} 
    \varepsilon_n = (-1)^{b_n+b_{n+1}}.
\end{equation}
The corresponding external angle in base $2$ is 
\[  \vartheta_\gamma = \sum_{n\geq1}\frac{b_n}{2^{n}} = 0.b_1b_2b_3\cdots.\]
The other choice $b_1=0$ gives the conjugate external angle $1-\vartheta_\gamma$. These two angles determine the same real kneading data.

We next recall the external angle appearing in Bl\'e's construction.Let $\theta_\gamma$ be the external argument of the point of the main cardioid with irrational internal argument $\gamma$, and let 
\[s_\gamma(n) = \lfloor(n+1)\gamma\rfloor-\lfloor n\gamma\rfloor.\]
The staircase coding of the rotation of angle $\gamma$ gives
\begin{equation}
    \label{eq:characteristic_external_angle_rotation}
    \theta_\gamma = \sum_{n\geq1} \frac{s_\gamma(n)}{2^n},
\end{equation}
Equivalently, $\theta_\gamma = 0.s_\gamma(1)s_\gamma(2)s_\gamma(3)\cdots$, in base $2$; see \cite{Bullett-Sentenac1994_Ordered_orbits_of_the_shift}. This is the characteristic Sturmian coding of the rigid rotation. Bl\'e then considers the map $T(\theta) = \frac12+\frac{\theta}{4}$.

The relation between these two angle constructions is the following.

\begin{lemma}
\label{lem:external_angle_Q_gamma}
For every irrational $\gamma\in(0,1/2)$, the external angle $\vartheta_\gamma$ obtained from the minimum-type kneading sequence $\nu_\gamma^{\min}$ satisfies
\[ \vartheta_\gamma = T(\theta_\gamma) = \frac12+\frac{\theta_\gamma}{4}.\]
\end{lemma}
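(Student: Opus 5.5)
The plan is to compare binary digits. Since $T(\theta_\gamma)=\tfrac12+\tfrac{\theta_\gamma}{4}$, its binary expansion is
\[ T(\theta_\gamma)=0.1\,0\,s_\gamma(1)s_\gamma(2)s_\gamma(3)\cdots. \]
It therefore suffices to prove that the digits $b_n$ defined through \eqref{eq:consecutive_angle_digits_kneading} satisfy
\[ b_1=1,\qquad b_{n+2}=s_\gamma(n)\quad\text{for } n\geq0, \]
with the convention $s_\gamma(0)=\lfloor\gamma\rfloor-0=0$. By \eqref{eq:lap_orientation_sign} and \eqref{eq:consecutive_angle_digits_kneading}, $e_n=1$ exactly when $b_n=b_{n+1}$. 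Since the $b_n$ are determined recursively from $b_1=1$, it is enough to check that the proposed digits reproduce $\nu_\gamma^{\min}$.

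Concretely, I need three checks:
\begin{itemize}
\item[(i)] $e_1=0$ matches $b_1=1\neq b_2=0$.
\item[(ii)] For $n\geq2$, $e_n=1$ if and only if $s_\gamma(n-2)=s_\gamma(n-1)$.
\item[(iii)] The derived word from (ii) equals $\Phi_{\min}(Q_\gamma(1))\Phi_{\min}(Q_\gamma(2))\cdots$, which is \eqref{eq:min_kneading_sequence_Q_gamma}.
\end{itemize}
As a sanity check, $s_\gamma(0)=s_\gamma(1)=0$ because $\gamma<1/2$, so $e_2=1$; this agrees with $\Phi_{\min}(Q_\gamma(1))=\Phi_{\min}(0)=1$.

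The key combinatorial step is a block decomposition of $s_\gamma(0)s_\gamma(1)s_\gamma(2)\cdots$. Since $\gamma<1/2$, the symbol $1$ is isolated in $s_\gamma$. I claim that
\[ s_\gamma(0)s_\gamma(1)\cdots=\tau(Q_\gamma(1))\,\tau(Q_\gamma(2))\cdots,\qquad \tau(0)=0,\ \tau(1)=01. \]
Given this, the indicator of equality of consecutive digits turns each block $0$ into a single $1$, since it is followed by a block that starts with $0$. It turns each block $01$ into $00$, since the $1$ differs from both of its neighbours. This is exactly $\Phi_{\min}$, so (ii) and (iii) follow.

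To prove the decomposition, note that the first $k$ blocks have total length $k+\lfloor k\omega_\gamma\rfloor=S_k-1$ by \Cref{prop:cutting_times_kneading_sequence_sturmian}. So they occupy indices $0,\dots,S_k-2$. It suffices to show two things:
\begin{itemize}
\item[(a)] the number of $1$'s among $s_\gamma(0),\dots,s_\gamma(S_k-2)$ equals $\lfloor k\omega_\gamma\rfloor$;
\item[(b)] every $1$ of $s_\gamma$ sits at the last index of some $S_k-2$.
\end{itemize}
Since there is one $1$ per block of type $01$, these give the claim.

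By telescoping, the count in (a) is $\lfloor (S_k-1)\gamma\rfloor=\lfloor (k+\lfloor k\omega_\gamma\rfloor)\gamma\rfloor$. Using $\gamma=\omega_\gamma/(1+\omega_\gamma)$, this reduces to a Beatty-type identity. Write $N=\lfloor k\omega_\gamma\rfloor$. Then $(k+N)\gamma=N+(k\omega_\gamma-N)/(1+\omega_\gamma)$, and the correction lies in $[0,1)$, so the floor is $N$.

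For (b), the positions of the $1$'s of $s_\gamma$ are the $n$ with $\lfloor(n+1)\gamma\rfloor>\lfloor n\gamma\rfloor$. I would show these are exactly $n=S_k-2$ with $Q_\gamma(k)=1$. The approach is to compare the counts at $S_k-2$ and $S_{k-1}-2$: by (a) they differ by $Q_\gamma(k)$. Each $1$ of $s_\gamma$ is surrounded by $0$'s, so when $Q_\gamma(k)=1$ the unique new $1$ lies in $\{S_{k-1}-1,\dots,S_k-2\}$, a window of length $2$. I must rule out that it sits at $S_{k-1}-1$. This amounts to a fractional-part inequality: $\{(S_k-2)\gamma\}$ should lie in $[1-\gamma,1)$ exactly when $Q_\gamma(k)=1$.

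I expect this endpoint check, together with the matching of off-by-one index conventions (the $b_n$ versus $s_\gamma(n-2)$ and $e_n$ labels), to be the main obstacle. The rest is bookkeeping.
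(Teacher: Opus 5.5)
Your proposal is correct and has the same overall structure as the paper's proof: the rule $e_n=1\iff b_n=b_{n+1}$, combined with a block identity between $Q_\gamma$ and $s_\gamma$. Your identity $s_\gamma(0)s_\gamma(1)\cdots=\tau(Q_\gamma(1))\tau(Q_\gamma(2))\cdots$ is the paper's $s_\gamma(1)s_\gamma(2)\cdots=H(Q_\gamma(1))H(Q_\gamma(2))\cdots$, with $H(0)=0$ and $H(1)=10$, shifted by the leading $0$; note that $0\,H(x)=\tau(x)\,0$.

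There are two differences. First, the paper runs the sign rule forward from $\nu_\gamma^{\min}$, whereas you verify a guessed expansion. These are equivalent, since the $b_n$ are determined recursively. Second, the paper proves the identity by locating the $m$-th $1$ in each word. In $Q_\gamma$ it sits at $k_m=\lceil m/\omega_\gamma\rceil$, so in the image it sits at $k_m+m-1=\lceil m/\gamma\rceil-1$, which is also the position of the $m$-th $1$ of $s_\gamma$. In your indexing this reads $S_{k_m}-2=\lceil m/\gamma\rceil-1$. That gives your (a) and (b) at once and removes the need for a separate endpoint check.

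If you keep your route, step (b), which you left open, is not a real obstacle. The computation you already did for (a) gives
\[ (S_k-2)\gamma=\lfloor k\omega_\gamma\rfloor+\frac{\{k\omega_\gamma\}-\omega_\gamma}{1+\omega_\gamma}. \]
Moreover, $Q_\gamma(k)=1$ is equivalent to $\{k\omega_\gamma\}<\omega_\gamma$, so in that case the correction term lies in $(-\gamma,0)$. Hence $\lfloor (S_k-2)\gamma\rfloor=\lfloor k\omega_\gamma\rfloor-1$, while $\lfloor (S_k-1)\gamma\rfloor=\lfloor k\omega_\gamma\rfloor$, so $s_\gamma(S_k-2)=1$.

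Your index conventions are consistent, so there is no off-by-one issue. Position $p$ of $s_\gamma$ corresponds to $e_{p+2}$. The $k$-th block occupies positions $S_{k-1}-1,\dots,S_k-2$ on one side and $e_{S_{k-1}+1},\dots,e_{S_k}$ on the other.
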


\begin{proof}
Put $\omega_\gamma = \gamma/ (1-\gamma)$. We first determine the binary expansion of $\vartheta_\gamma$ directly from the block structure \eqref{eq:min_kneading_sequence_Q_gamma}.

Since $e_1=0$, one has $\varepsilon_1=-1$. As $b_1=1$, \eqref{eq:consecutive_angle_digits_kneading} gives $b_2=0$. Hence the binary expansion of $\vartheta_\gamma$ begins with $10$.

After this initial symbol the orientation state is $b_2=0$. A block $\Phi_{\min}(0)=1$ has sign $+1$ and therefore leaves the binary state unchanged, so it contributes the digit $0$. A block $\Phi_{\min}(1)=00$ has signs $(-1,-1)$ and changes the state $0\to1\to0$, so it contributes the two digits $10$.

Define $H(0)=0$, and $H(1)=10$. It follows that
\begin{equation}
    \label{eq:external_angle_Q_morphism}
    \vartheta_\gamma = 0.10\, H(Q_\gamma(1)) H(Q_\gamma(2)) H(Q_\gamma(3))\cdots.
\end{equation}

We claim that
\begin{equation}
    \label{eq:H_Q_characteristic_rotation_word}
    H(Q_\gamma(1))H(Q_\gamma(2))\cdots = s_\gamma(1)s_\gamma(2)\cdots.
\end{equation}
To prove this, let $k_m$ be the position of the $m$-th occurrence of the symbol $1$ in $Q_\gamma(1)Q_\gamma(2)\cdots$. Since
\[ \sum_{j=1}^{k}Q_\gamma(j) = \lfloor k\omega_\gamma\rfloor,\]
we have $k_m = \left\lceil\frac{m}{\omega_\gamma}\right\rceil$. Before the block $H(Q_\gamma(k_m))=10$, there are $k_m-1$ blocks, of which $m-1$ have length two. Hence the $m$-th occurrence of the symbol $1$ in the word on the left-hand side of \eqref{eq:H_Q_characteristic_rotation_word} is at position $r_m = k_m+m-1$. Since $\gamma = \omega_{\gamma} / (1 + \omega_{\gamma})$, we obtain
\[ r_m = \left\lceil\frac{m}{\gamma}\right\rceil-1.\]

On the other hand, we have
\[ \sum_{j=1}^{N}s_\gamma(j) = \lfloor(N+1)\gamma\rfloor,\]
so the $m$-th occurrence of the symbol $1$ in $s_\gamma(1)s_\gamma(2)\cdots$ is also at position $\left\lceil\frac{m}{\gamma}\right\rceil-1$. Thus the two binary words have exactly the same positions of their symbols $1$, proving \eqref{eq:H_Q_characteristic_rotation_word}.

Combining \eqref{eq:external_angle_Q_morphism}, \eqref{eq:H_Q_characteristic_rotation_word}, and \eqref{eq:characteristic_external_angle_rotation}, we obtain
\[ \vartheta_\gamma = 0.10s_\gamma(1)s_\gamma(2)\cdots.\]
Multiplication by $1/4$ shifts the binary expansion of $\theta_\gamma$ two places to the right, and addition of $1/2$ prefixes the digits $10$. Hence
\[ \vartheta_\gamma = \frac12+\frac{\theta_\gamma}{4} = T(\theta_\gamma),\]
as required.
\end{proof}

Bl\'e starts from the point of the main cardioid with internal argument $\gamma$ and external argument $\theta_\gamma$. He proves that the external parameter ray $R_M\bigl(T(\theta_\gamma)\bigr)$ lands at a real parameter $t_\gamma^{\mathrm B}$; see \cite[Theorem~1.3]{Ble_2204_external_arguments_and_invariant_measures}. Equivalently, $t_\gamma^{\mathrm B}$ is the common limit obtained from the even and odd continued-fraction convergents of $\gamma$; see \cite[Remark~3.9]{Ble_2204_external_arguments_and_invariant_measures}.

On the other hand, the standard kneading-to-external-angle correspondence for the real quadratic family associates to the minimum-type kneading sequence $\nu_\gamma^{\min}$ the characteristic external parameter angles $\vartheta_\gamma$ and  $1-\vartheta_\gamma$.

We can now identify the two quadratic parameters.

\begin{proposition}
\label{prop:Ble_quadratic_representative}
Let $\gamma \in (0,1/2) \setminus \Q$. Then $t_\gamma^{\mathrm B}=t_\gamma$, and the kneading map of $P_{t_\gamma^{\mathrm B}}$ is $Q_\gamma$.
\end{proposition}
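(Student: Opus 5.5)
The plan is to identify the two parameters through external angles and then conclude by real rigidity. By \Cref{lem:external_angle_Q_gamma}, the characteristic external angle $\vartheta_\gamma$ computed from the minimum-type kneading sequence $\nu_\gamma^{\min}$ of $Q_\gamma$ equals $T(\theta_\gamma)$. This is exactly the angle of the parameter ray $R_M(T(\theta_\gamma))$ that Bl\'e shows lands at the real parameter $t_\gamma^{\mathrm B}$ \cite[Theorem~1.3]{Ble_2204_external_arguments_and_invariant_measures}. So the task reduces to showing that a real parameter at which the ray of angle $\vartheta_\gamma$ lands has kneading sequence $\nu_\gamma^{\min}$.

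For that step I will use the standard kneading-to-external-angle correspondence for real quadratic maps \cite{Douady1986_algorithm_for_computing_external_angles}. Suppose a real parameter $t$ has a nonperiodic kneading sequence. Then the parameter rays landing at $t$ have angles $\vartheta$ and $1-\vartheta$. Here $\vartheta$ is obtained from the kneading sequence by the sign rule \eqref{eq:consecutive_angle_digits_kneading}: the binary digits of $\vartheta$ change exactly where the lap orientation $\varepsilon_n$ is $-1$.

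First, I show that $P_{t_\gamma^{\mathrm B}}$ has a nonperiodic kneading sequence. The landing angle $T(\theta_\gamma)=0.10s_\gamma(1)s_\gamma(2)\cdots$ is irrational, because $s_\gamma$ is a Sturmian, hence aperiodic, word. Parameters with periodic critical orbit, or lying in hyperbolic components, are landing points only of rational parameter rays. Since the ray is real-symmetric, its landing point lies in $[-2,-1/4]$, and one checks that it lies in $[-2,-1]$.

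Next, I invert the sign rule. The digits $b_n$ of the landing angle determine $\varepsilon_n=(-1)^{b_n+b_{n+1}}$, and hence the kneading sequence $e_n$. Applied to $\vartheta_\gamma=T(\theta_\gamma)$, this returns $\nu_\gamma^{\min}$, because the computation in \Cref{lem:external_angle_Q_gamma} is reversible.

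Therefore $P_{t_\gamma^{\mathrm B}}$ has kneading sequence $\nu_\gamma^{\min}$, and hence kneading map $Q_\gamma$. The kneading sequence is aperiodic, so uniqueness of the real quadratic realization \cite{Kozlovski-She_van_Strien2007_rigidity_for_real_polynomials} gives $t_\gamma^{\mathrm B}=t_\gamma$.

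The main obstacle is justifying that the landing point of the ray $R_M(\vartheta_\gamma)$ really has the kneading sequence read off from $\vartheta_\gamma$. This requires the correspondence to hold at irrational angles that are limits of the rational ones, not only at the rational angles themselves. My first approach uses Bl\'e's convergent description \cite[Remark~3.9]{Ble_2204_external_arguments_and_invariant_measures}. The parameters attached to the convergents $p_n/q_n$ are landing points of rational rays, where the combinatorics is explicit. As $n\to\infty$, their kneading sequences agree with $\nu_\gamma^{\min}$ on longer and longer prefixes, because the Sturmian words for $p_n/q_n$ share ever longer prefixes with the word for $\gamma$. The kneading sequence is lower semicontinuous in the parity-lexicographic order and monotone in $t$, and the limit kneading sequence is aperiodic. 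Together these should force the limit parameter $t_\gamma^{\mathrm B}$ to have kneading sequence $\nu_\gamma^{\min}$.
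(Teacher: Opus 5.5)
Your argument is correct at the same level of citation as the paper's, but it runs the kneading-to-angle correspondence in the opposite direction.

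\textbf{The paper's route.} It applies the correspondence at the already defined parameter $t_\gamma$. Since $P_{t_\gamma}$ has kneading sequence $\nu_\gamma^{\min}$, the ray $R_M(\vartheta_\gamma)$ lands at $t_\gamma$. By \Cref{lem:external_angle_Q_gamma} this is the ray $R_M(T(\theta_\gamma))$, which lands at $t_\gamma^{\mathrm B}$ by Bl\'e's theorem. A ray has at most one landing point, so $t_\gamma=t_\gamma^{\mathrm B}$, and the kneading statement follows.

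\textbf{Your route.} You apply the correspondence at $t_\gamma^{\mathrm B}$, decode the landing angle into a kneading sequence, and finish with rigidity. This costs more in three ways. You need the correspondence in its \emph{exclusive} form: every ray landing at the real parameter has angle $\vartheta$ or $1-\vartheta$ computed from its kneading sequence. The paper needs only the \emph{existence} form. You also need a separate check that the critical orbit of $P_{t_\gamma^{\mathrm B}}$ is not periodic, and rigidity for the identification. The paper's identification step needs neither; rigidity enters there only to name $t_\gamma$. In fact the paper's route shows that every real parameter with kneading sequence $\nu_\gamma^{\min}$ equals $t_\gamma^{\mathrm B}$.

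What your route buys is a direct reading of the kneading sequence of $P_{t_\gamma^{\mathrm B}}$. It is also the version that could be freed from the correspondence at irrational angles, by passing to the limit along Bl\'e's convergent parameters. Note that $1-\vartheta$ decodes to the same kneading sequence as $\vartheta$, since complementing every digit preserves $(-1)^{b_n+b_{n+1}}$. So you need not decide which of the two angles $T(\theta_\gamma)$ is.

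Some details should be tidied.
\begin{enumerate}[(1)]
\item The ray $R_M(T(\theta_\gamma))$ is not itself real-symmetric. Bl\'e's theorem gives a real landing point, and the conjugate ray lands there as well.
\item Parameters in hyperbolic components, centres included, are not landing points of any ray. So nonperiodicity of the critical orbit follows simply from $t_\gamma^{\mathrm B}\in\partial M$.
\item The localization in $[-2,-1]$ plays no role.
\item The limit argument in your last paragraph is unnecessary once the correspondence is cited as the paper does, and as written it is only a sketch (``should force''). If you want it, the right tool is not monotonicity or semicontinuity. Use instead that each symbol $e_n(t)$ is locally constant near $t_\gamma^{\mathrm B}$, because $P^n_{t_\gamma^{\mathrm B}}(0)\neq0$. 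Then prefix agreement with $\nu_\gamma^{\min}$ for the convergent parameters, whose kneading you would take from Bl\'e's explicit rational construction, passes to the limit.
\end{enumerate}
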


\begin{proof}
The parameter ray $R_M(\vartheta_\gamma)$ lands at the quadratic representative $t_\gamma$.

By \Cref{lem:external_angle_Q_gamma}, $\vartheta_\gamma = T(\theta_\gamma)$. Hence the same external parameter ray lands at both $t_\gamma$ and $t_\gamma^{\mathrm B}$. Since an external ray has at
most one landing point, we have $t_\gamma=t_\gamma^{\mathrm B}$.

Finally, $P_{t_\gamma^{\mathrm B}}$ has the minimum-type kneading sequence $\nu_\gamma^{\min}$ and therefore kneading map $Q_\gamma$.
\end{proof}

Thus the class constructed by Bl\'e coincides, angle by angle, with the quadratic representatives of the long-branched Sturmian classes $\cS_\gamma$. Bl\'e's external-angle and cut-and-glue constructions provide a geometric description of these quadratic representatives, whereas the definition of $\cS_\gamma$ through the kneading map $Q_\gamma$ is purely combinatorial.


\section{Absolutely continuous invariant probability measures}
\label{sec:acim}

For the rest of this section, fix $\gamma\in(0,1/2)\setminus\Q$, $\ell\in(1,2]$, and a map $f\in\cS_\gamma^\ell$ with turning point $c$. By our standing assumptions, $f$ satisfies the contraction principle. Hence, by \Cref{cor:uniform_recurrence_long_branched_sturmian}, the turning point is recurrent and nonperiodic. Moreover, as proved in the preceding section, $f$ is nonrenormalizable. Consequently, the principal nest
\[ I_0\supset I_1\supset I_2\supset\cdots\ni c\]
is infinite and has infinitely many noncentral moments. By \Cref{thm:Shen_decay_of_geometry}, the map $f$ displays decay of geometry. The purpose of this section is to combine this geometric decay with arithmetic control of the central-cascade lengths.

Recall the noncentral moments $m(k)$ and the cascade lengths $L_k=m(k+1)-m(k)$ from \Cref{subsec:principal_nest_preliminaries}. Centrality of a principal-nest return is determined by the kneading data. Consequently, the cascade pattern is the same for every realization in $\cS_\gamma$.

The following estimate is proved by Bl\'e for the quadratic representative; see \cite[Lemma~5.2]{Ble_2204_external_arguments_and_invariant_measures}. Although stated there for a quadratic polynomial, the estimate depends only on the sequence of central and noncentral principal-nest returns, and therefore it extends to every realization in the same kneading class.

\begin{lemma}
\label{lem:continued_fraction_cascade_length}
Write $\gamma=[a_1,a_2,a_3,\ldots]$. Then, for every sufficiently large $k$,
\[ L_k \leq
    \begin{cases}
        a_{2k+1},&\text{if }a_1=2,\\[1mm]
        a_{2k-1},&\text{if }a_1>2.
    \end{cases}
\]
\end{lemma}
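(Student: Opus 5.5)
The estimate is purely combinatorial: whether the return of the principal nest at a given level is central is decided by the kneading sequence. So I would work entirely with the kneading data of $Q_\gamma$ and never use the geometry of $f$. The plan is to describe the principal nest through the return times of the turning point. Recall the standard fact: the return time of $c$ to $I_n$ is one of the closest-return times of $c$, and these are the co-cutting times $\tS_j$ in the long-branched setting, once one is past the first few levels. I would then read off the central cascades from the explicit formula of \Cref{thm:explicit_co_cutting_times_rotation_like}.

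\textbf{Step 1: return times are co-cutting times.} First I would show that for $n$ large, the first return time $r_n$ of $c$ to $I_n$ is a co-cutting time $\tS_{j(n)}$. Moreover, $r_{n+1}$ is the next co-cutting time at which the orbit falls on the same side of $c$ in the appropriate one-sided sense. Concretely, $c_{\tS_j}$ is closer to $c$ than all previous iterates on its side, because $R(\tS_j)$ measures the agreement with the kneading sequence. Together with the niceness of $I_n$, this identifies $\partial I_{n+1}$ with a pullback of $\partial I_n$ along the branch of length $r_n$. This is the standard Hofbauer-tower description of the principal nest, and I would phrase it via the cylinder $Z$ of the first disagreement time.

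\textbf{Step 2: when a return is central.} The return at level $n$ is central exactly when $r_{n+1}=r_n$. This happens when $c_{r_n}$ already lies in $I_{n+1}$, so the orbit keeps returning with the same period inside the cascade. Combinatorially, a cascade corresponds to a maximal run of co-cutting times in arithmetic progression with the same step, $\tS_{j+1}-\tS_j=S_{\tQ(j+1)}$ constant. By \eqref{eq:explicit_co_cutting_times_rotation_like}, within the block indexed by $m$ the co-cutting times are $q_{2m}+jq_{2m+1}$ with constant step $q_{2m+1}$, for $j=1,\dots,a_{2m+2}$. However, the indexing in the statement refers to odd partial quotients. So I would expect the cascades of the principal nest to correspond to the dual, upper-side Farey sequence $q_{2m-1}+jq_{2m}$, $j\le a_{2m+1}$. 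The reason is that the nest alternates sides: $I_n$ is symmetric, so returns from both sides count. I would redo the argument of \Cref{thm:explicit_co_cutting_times_rotation_like} for upper record returns to get runs of length $a_{2m+1}$. Each noncentral moment then starts a new such run, giving $L_k\le a_{2k+\epsilon}$. The shift $\epsilon\in\{1,-1\}$ is determined by whether the first block with $a_1$ is degenerate. When $a_1=2$ we have $\omega_\gamma=[1,a_2,\dots]$, and an extra noncentral moment occurs at the start, which shifts the index by two.

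\textbf{Main obstacle.} The delicate point is the exact matching of indices: how the $k$-th noncentral moment corresponds to the $k$-th even or odd Farey block, including the first few levels and the $a_1=2$ versus $a_1>2$ dichotomy. Step 2 would deliver only the inequality $L_k\le$ run length, since a run could also be split by extra noncentral returns. Proving that splitting never lengthens a cascade, and fixing the offset, is where care is needed. I would first check the golden-mean case and one case with $a_1=3$ by hand to pin down the offset. Failing a clean general argument, I would fall back on Bl\'e's \cite[Lemma~5.2]{Ble_2204_external_arguments_and_invariant_measures}. That lemma is proved for the quadratic representative, and the transfer to every $f\in\cS_\gamma$ is justified by Step 2: centrality is determined by the kneading map, and $Q_\gamma$ is common to the whole class by \Cref{prop:Ble_quadratic_representative}.
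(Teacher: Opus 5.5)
Your fallback is what the paper does. The paper gives no independent proof of this lemma. It quotes Bl\'e's Lemma~5.2 for the quadratic representative and extends it to all of $\cS_\gamma$, because the central/noncentral pattern of the principal nest is determined by the kneading data, and that data is $Q_\gamma$ throughout the class (\Cref{prop:Ble_quadratic_representative}). The transfer rests on the general fact that $\partial I_n$ and the position of $c_r$ relative to it are read off from itineraries. It does not depend on your Step~2 and should not be justified by it.

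The direct route of Steps~1--2 has genuine gaps.

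(i) You correctly note that a return is central iff $r_{n+1}=r_n$, but you then identify a cascade with a run of distinct co-cutting times in arithmetic progression. These two claims are incompatible. Throughout a cascade the return time is one fixed $s=s_k$ (see the proof of \Cref{lem:central_pullback_general_order}). The cascade is the outward drift $c_{js}\in I_{m(k+1)-j}\setminus I_{m(k+1)-j+1}$, $1\le j\le L_k$. So $L_k$ counts multiples of a single return time. The run $q_{2m}+jq_{2m+1}$ in \eqref{eq:explicit_co_cutting_times_rotation_like} instead consists of successively deeper returns, which is the opposite phenomenon. Heuristically, the odd quotients enter through the multiples $jq_{2m}$: with $\delta_n=|q_n\gamma-p_n|$, one has $j\delta_{2m}<\delta_{2m-1}$ exactly for $j\le a_{2m+1}$.

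(ii) Your reason for passing to the upper Farey times, that ``returns from both sides count'', is false in this class. Every $1$ after the first in $K(f)=1\,\Phi(Q_\gamma(1))\Phi(Q_\gamma(2))\cdots$ lies in a block $11$, so the word $010$ never occurs. Hence $e_n=0$ forces $e_{n+1}e_{n+2}\neq e_1e_2=10$, i.e.\ $R(n)\le 2$. The argument of \Cref{lem:agreement_interval_is_monotone} then gives a preimage of $c$ of order at most $2$ in $[c,c_n]$. Thus $c$ is approached only from the side coded $1$. The times $q_{2m-1}+jq_{2m}$ carry $e_n=0$ for large $m$ (e.g.\ $q_{2m+1}=S_{d_m}$ with $Q_\gamma(d_m)=0$). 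So they are never return times to deep levels, and redoing \Cref{thm:explicit_co_cutting_times_rotation_like} for upper returns computes nothing relevant.

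(iii) The identification of return times with co-cutting times in Step~1 is asserted, not proved. The index offset and the $a_1=2$ versus $a_1>2$ dichotomy are guessed. Your heuristic even points the wrong way: an extra early noncentral moment when $a_1=2$ would lower, not raise, the index of the partial quotient relative to the case $a_1>2$.

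So the proposal is complete only through the citation, which is also all the paper offers.
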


We next control the loss of geometry caused by one central pullback.

\begin{lemma}
\label{lem:central_pullback_general_order}
There exist constants $K_{\ell} \geq 1$ and $k_1(f)\geq1$ such that, for every $k\geq k_1(f)$ and every $m(k)\leq n\leq m(k+1)-2$, if $\mu_n\leq\frac12$, then
\begin{equation}
\label{eq:central_pullback_general_order}
        \mu_{n+1} \leq K_{\ell} \mu_n^{1/\ell}.
\end{equation}
\end{lemma}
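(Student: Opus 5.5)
The plan is to describe a central return by a single branch and compare lengths through it. Fix $k$ large and $m(k)\le n\le m(k+1)-2$. The return of $c$ to $I_n$ is then central, so $\cR_n(c)\in I_{n+1}$. Write the first return of $c$ to $I_n$ as $f^{s}$. Then $I_{n+1}$ is the component of $f^{-s}(I_n)$ containing $c$, and $I_{n+2}$ is the component of $f^{-s}(I_{n+1})$ containing $c$. The second statement holds because, within a central cascade, the return time to the next level is the same $s$.

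Decompose $f^{s}|_{I_{n+1}}=h\circ f$, where $h=f^{s-1}$ is a diffeomorphism from $f(I_{n+1})$ onto $I_n$. Near $c$ the non-flatness gives $|f(x)-c_1|\asymp|x-c|^\ell$, with constants depending only on the map. This comparison holds uniformly once $I_{m(k)}$ is small, which is where $k_1(f)$ comes from. In the parametrization $|\psi(f(x))|=|\phi(x)|^\ell$, only the $C^3$ charts matter, and on tiny intervals their distortion tends to $1$. This is why the constant can be taken depending only on $\ell$ for large $k$.

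The central step is bounded distortion of $h$ on $f(I_{n+1})$ with a constant depending only on $\ell$. I would obtain it as follows.
\begin{enumerate}[(a)]
\item Use the extension of \Cref{prop:principal_nest_extension_distortion}: in a cascade the central return branch extends diffeomorphically onto $I_{m(k)-1}$, since central returns reuse the same branch $H_k$. Hence $h$ extends to $U\supset f(I_{n+1})$ with $h(U)\supset I_{m(k)-1}$.
\item Note that $I_{m(k)-1}$ contains a $\tau$-scaled neighbourhood of $I_n\subset I_{m(k)}$, with $\tau$ large, by \Cref{cor:uniform_decay_fixed_critical_order}.
\item Apply the Koebe principle, \Cref{lem:real_Koebe_principal_nest}, using the cross-ratio bound of Kozlovski. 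This gives $\Dist(h|_{f(I_{n+1})})\le D_\ell$.
\end{enumerate}

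With distortion in hand, consider $J=I_{n+2}\subset I_{n+1}$, so that $h(f(J))=I_{n+1}$ and $h(f(I_{n+1}))\subset I_n$. Bounded distortion gives
\[\frac{|f(I_{n+2})|}{|f(I_{n+1})|}\le D_\ell\,\frac{|I_{n+1}|}{|I_n|}\cdot C,\]
where the constant $C$ accounts for $h(f(I_{n+1}))$ being only the part of $I_n$ covered. More precisely, I would compare $|f(I_{n+1})|$ with $|h^{-1}(I_n)|$ and use that $f(I_{n+1})$ is one side of the fold. By the power law, $|f(I_{n+j})|\asymp|I_{n+j}|^\ell$ up to symmetry constants, and since $I_{n+1}$ and $I_{n+2}$ are nice and comparable on both sides of $c$, this yields
\[\mu_{n+1}^\ell\le K\,\mu_n .\]
This is exactly $\mu_{n+1}\le K_\ell\,\mu_n^{1/\ell}$ after taking $\ell$-th roots.

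The hypothesis $\mu_n\le 1/2$ is used to ensure that $I_{n+1}$ sits well inside $I_n$. This controls the position of $c_1$-side images and the near-symmetry of $I_{n+1}$ about $c$, so the power-law comparison on both intervals uses the same constant.

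The main obstacle I expect is the uniform distortion of $h$ inside a central cascade. The Koebe space must come from $I_{m(k)-1}$ rather than from $I_{n-1}$, because deep in a cascade $I_{n}$ and $I_{n-1}$ may be nearly equal and give no room. I would handle this by tracking that every central return in the cascade uses the same branch $H_k$ restricted to nested preimages, so the single extension from \Cref{prop:principal_nest_extension_distortion} serves for all $n$ in the cascade.
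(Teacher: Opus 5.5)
Your proposal follows the paper's argument: one branch $G_k=H_k\circ f$ serves the whole cascade, the distortion bound for $H_k$ on $f(I_{n+1})\subset f(I_{m(k)+1})$ is exactly \Cref{prop:principal_nest_extension_distortion}, and the power law in normal-form coordinates converts the length ratio into $\mu_{n+1}^\ell$. Your steps (b)--(c) are therefore superfluous; moreover (b) does not follow from \Cref{cor:uniform_decay_fixed_critical_order} at $n=m(k)$, since that corollary compares $I_{m(k)+1}$ with $I_{m(k)}$, not $I_{m(k)}$ with $I_{m(k)-1}$.

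The step you should make explicit is your constant $C$. The image $h(f(I_{n+1}))$ is not all of $I_n$; it is the interval joining $\cR_n(c)\in I_{n+1}$ to an endpoint of $I_n$. In the coordinates $\phi(I_j)=(-r_j,r_j)$ its length is therefore at least $r_n-r_{n+1}$, which is a definite fraction of $r_n$ once $\mu_n\le\frac12$. These intervals are symmetric because both endpoints of $I_j$ have the same $f$-image, not because $\mu_n\le\frac12$. For the numerator, the inclusion $h(f(I_{n+2}))\subset I_{n+1}$ suffices; it is not an equality, as you wrote.
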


\begin{proof}
For $k\geq1$, let $s_k$ be the first return time of $c$ to $I_{m(k)}$, and set $H_k = f^{s_k-1}$, and $G_k = f^{s_k} = H_k\circ f$. The returns at the levels $m(k),m(k)+1,\ldots,m(k+1)-2$ are central. If the first return time to $I_j$ is $s_k$ and this return is central, then $c_{s_k}\in I_{j+1}$. No return to $I_{j+1}$ can occur before time $s_k$, since such a return would also be an earlier return to $I_j$. Consequently, the first return time remains equal to $s_k$ throughout the entire cascade, including its terminal noncentral return. Thus $G_k|_{I_{j+1}}$ is the first return map from $I_{j+1}$ to $I_j$ for every $m(k)\leq j\leq m(k+1)-1$. 


By \Cref{prop:principal_nest_extension_distortion}, we have
\[ \sup_{x,y\in f(I_{m(k)+1})} \frac{|H_k'(x)|}{|H_k'(y)|} \leq D_{\ell}.\]
Since $I_{n+1}\subset I_{m(k)+1}$ for every $n\geq m(k)$, the same distortion bound applies to the restriction of $H_k$ to $f(I_{n+1})$.

Let $\phi$ and $\psi$ be the local critical normal-form coordinates from the definition of non-flatness. Thus $\phi(c) = 0 = \psi(f(c))$, and, after possibly reversing one of the coordinates, $F\= \psi\circ f\circ\phi^{-1}$ satisfies $|F(u)|=|u|^\ell$ for all $u$ sufficiently close to $0$.

The principal intervals shrink to $c$, and their images under $f$ shrink to $f(c)$. Since $\phi$, $\phi^{-1}$, $\psi$, and $\psi^{-1}$ are fixed $C^3$ diffeomorphisms for the map $f$, their derivative distortions on these intervals converge to $1$. Therefore, after increasing a map-dependent initial level, we may assume that
\begin{equation}
    \label{eq:critical_coordinates_small_distortion}
    \Dist(\phi|_{I_j}), \Dist(\phi^{-1}|_{\phi(I_j)}), \Dist(\psi|_{f(I_j)}),
    \Dist(\psi^{-1}|_{\psi(f(I_j))}) \leq \frac{4}{3}
\end{equation}
for every principal interval occurring below. The initial level at which \cref{eq:critical_coordinates_small_distortion} holds may depend on $f$, but the bound $4/3$ is fixed.

Put $\widehat I_j=\phi(I_j)$. For all sufficiently large $j$, the interval $I_j$ is contained in the domain of the critical normal-form coordinate $\phi$. Moreover, the central return branch on $I_j$ factors as a monotone map after the folding map $f$. The two boundary points of $I_j$ therefore have the same image under $f$. Since the normalized critical map $F=\psi\circ f\circ\phi^{-1}$ is even, it follows that $\widehat I_j:=\phi(I_j)=(-r_j,r_j)$ for some $r_j>0$. Define
\[ \widehat\mu_j = \frac{|\widehat I_{j+1}|}{|\widehat I_j|} = \frac{r_{j+1}}{r_j}.\]
The distortion bound for $\phi$ gives
\begin{equation}
    \label{eq:original_normalized_scaling_comparison}
    \frac34\mu_j \leq \widehat\mu_j \leq \frac43\mu_j
\end{equation}
for every sufficiently large $j$.

Define the normalized versions of $H_k$ and $G_k$ by $\widehat H_k = \phi\circ H_k\circ\psi^{-1}$ and $\widehat G_k = \phi\circ G_k\circ\phi^{-1}$. Since $G_k=H_k\circ f$, one has $\widehat G_k = \widehat H_k\circ F$. 
By \Cref{prop:principal_nest_extension_distortion} and \cref{eq:critical_coordinates_small_distortion},
\begin{equation}
    \label{eq:normalized_Hk_distortion}
    \Dist \left( \widehat H_k \bigm| F(\widehat I_{m(k)+1}) \right) \leq \left(\frac43\right)^2D_\ell
    = \frac{16}{9}D_\ell.
\end{equation}
Indeed, the two additional factors come from the derivative distortions
of $\phi$ and $\psi^{-1}$. Put $\widehat D_\ell = \frac{16}{9}D_\ell$.

Now let $m(k)\leq n\leq m(k+1)-2$ and suppose that $\mu_n\leq1/2$. By \cref{eq:original_normalized_scaling_comparison},
\[ \widehat\mu_n \leq \frac43\mu_n \leq \frac23.\]
The interval $\widehat G_k(\widehat I_{n+1})$ joins $\widehat G_k(0)\in\widehat I_{n+1}$ to an endpoint of $\widehat I_n$. Therefore
\[ |\widehat G_k(\widehat I_{n+1})| \ge r_n-r_{n+1} = r_n(1-\widehat\mu_n) \ge \frac{r_n}{3}.\]
Moreover, $\widehat G_k(\widehat I_{n+2}) \subset \widehat I_{n+1}$, and hence $|\widehat G_k(\widehat I_{n+2})| \le 2r_{n+1}.$ It follows that
\begin{equation}
    \label{eq:normalized_G_ratio_central_pullback}
    \frac{ |\widehat G_k(\widehat I_{n+2})|}{|\widehat G_k(\widehat I_{n+1})|} \le 6\widehat\mu_n.
\end{equation}

Using $\widehat G_k = \widehat H_k\circ F$ together with \cref{eq:normalized_Hk_distortion} and \cref{eq:normalized_G_ratio_central_pullback}, we obtain
\[ \frac{ |F(\widehat I_{n+2})|}{|F(\widehat I_{n+1})|} \le 6\widehat D_\ell\widehat\mu_n.\]
Since $|F(u)|=|u|^\ell$, one has $|F(\widehat I_j)|=r_j^\ell$. Consequently,
\[ \widehat\mu_{n+1}^\ell = \frac{r_{n+2}^\ell}{r_{n+1}^\ell} = \frac{|F(\widehat I_{n+2})|}{|F(\widehat I_{n+1})|}
    \le 6\widehat D_\ell\widehat\mu_n,\]
and therefore
\[ \widehat\mu_{n+1} \leq (6\widehat D_\ell)^{1/\ell} \widehat\mu_n^{1/\ell}.\]

Finally, applying \cref{eq:original_normalized_scaling_comparison} at levels $n$ and $n+1$ gives
\[  \mu_{n+1} \leq \frac{4}{3} \ \widehat{\mu}_{n+1} \le \frac{4}{3} (6\widehat{D}_\ell)^{1/\ell} \ \widehat\mu_n^{1/\ell} \leq \frac43 \left( 6\widehat D_\ell\frac43 \right)^{1/\ell} \mu_n^{1/\ell}.\]
Thus the conclusion holds with
\[ K_\ell = \frac43 \left( 8\widehat D_\ell \right)^{1/\ell} = \frac43 \left( \frac{128}{9}D_\ell \right)^{1/\ell}.\]
This constant depends only on $\ell$, since $D_\ell$ does. Enlarging the map-dependent initial index so that both \Cref{prop:principal_nest_extension_distortion} and \cref{eq:critical_coordinates_small_distortion} apply completes the proof.
\end{proof}

Put
\begin{equation}
\label{eq:epsilon_star_general_acip}
        \e_{\ell} \= \min\left\{\e(\ell),\frac12\right\}, \qquad \text{and} \qquad 
        M_{\ell} \= \max\left\{1,K_{\ell}^{\ell/(\ell-1)}\right\},
\end{equation}
where $\e(\ell)$ is the constant from \Cref{thm:Bruin_shen_vs_unimodal_acip_theorem}, and put
\begin{equation}
    \label{eq:k_2_level}
    k_2(f) \= \max \{k_0(f),k_1(f)\},
\end{equation}
where $k_0(f)$, and $k_1(f)$ are as in \cref{cor:uniform_decay_fixed_critical_order} and \cref{lem:central_pullback_general_order} respectively.

\begin{lemma}
\label{lem:propagation_noncentral_decay_general}
Let $k\geq k_2(f)$ and suppose that
\[ M_\ell \exp\left(-\frac{b_\ell k}{\ell^{L_k}}\right) \le \e_{\ell}. \]
Then, for every $0\leq r\leq L_k-1$,
\[ \mu_{m(k)+r} \le M_{\ell} \exp\left(-\frac{b_\ell k}{\ell^r}\right) \le \e_{\ell}.\]
\end{lemma}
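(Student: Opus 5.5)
The proof is an induction on $r$. The base case uses the uniform decay of geometry at the noncentral level, and each inductive step feeds the central pullback estimate of \Cref{lem:central_pullback_general_order} into the bound.

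\emph{Base case $r=0$.} Since $k\geq k_2(f)\geq k_0(f)$, \Cref{cor:uniform_decay_fixed_critical_order} gives
\[ \mu_{m(k)}=\frac{|I_{m(k)+1}|}{|I_{m(k)}|}\leq \exp(-b_\ell k)\leq M_\ell\exp(-b_\ell k), \]
because $M_\ell\geq1$.

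\emph{Upper bounds by $\e_\ell$.} For every $0\leq r\leq L_k-1$ we have $\ell^r\leq\ell^{L_k}$. Hence
\[ M_\ell\exp\!\left(-\frac{b_\ell k}{\ell^r}\right)\leq M_\ell\exp\!\left(-\frac{b_\ell k}{\ell^{L_k}}\right)\leq\e_\ell \]
by hypothesis. So once the first inequality holds at level $r$, we get $\mu_{m(k)+r}\leq\e_\ell\leq\frac12$.

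\emph{Inductive step.} Assume the bound at level $r$ with $r\leq L_k-2$. Then $n=m(k)+r$ lies in the range $m(k)\leq n\leq m(k+1)-2$, and $\mu_n\leq\frac12$ by the previous paragraph. Since $k\geq k_1(f)$, \Cref{lem:central_pullback_general_order} applies:
\[ \mu_{n+1}\leq K_\ell\,\mu_n^{1/\ell}\leq K_\ell M_\ell^{1/\ell}\exp\!\left(-\frac{b_\ell k}{\ell^{r+1}}\right). \]
It remains to check that $K_\ell M_\ell^{1/\ell}\leq M_\ell$, which is equivalent to $K_\ell\leq M_\ell^{(\ell-1)/\ell}$. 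This holds because $M_\ell\geq K_\ell^{\ell/(\ell-1)}$ and $\ell>1$; this is exactly why $M_\ell$ was chosen in \eqref{eq:epsilon_star_general_acip}. The induction therefore closes at level $r+1$.

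\emph{Main obstacle.} The only delicate points are bookkeeping. First, the range of $n$ where the central pullback lemma applies stops at $m(k+1)-2$, which is exactly what is needed to reach $r=L_k-1$. Second, the hypothesis $\mu_n\leq\frac12$ must be verified at each step, and the uniform bound by $\e_\ell$ supplies it.
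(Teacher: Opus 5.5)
Your proof is correct and is essentially the paper's argument: induction on $r$. The base case comes from \Cref{cor:uniform_decay_fixed_critical_order} together with $M_\ell\geq1$, and the inductive step applies \Cref{lem:central_pullback_general_order} after checking $\mu_n\leq\e_\ell\leq\frac12$ via $\ell^r\leq\ell^{L_k}$; it then closes because $K_\ell\leq M_\ell^{1-1/\ell}$. Your separate paragraph verifying the bound by $\e_\ell$ for all $0\leq r\leq L_k-1$ only makes explicit what the paper does inside its inductive step.
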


\begin{proof}
We ague by induction on $r$. For $r=0$, the first inequality follows from \Cref{cor:uniform_decay_fixed_critical_order} and the fact that $M_{\ell} \geq 1$. Assume that the estimate holds for some $r$ with $0\leq r\leq L_k-2$. 
Since $0\leq r\leq L_k-1$, one has $\ell^r\leq\ell^{L_k}$, and hence
\[ M_\ell \exp\left(-\frac{b_\ell k}{\ell^r}\right) \le  M_\ell 
\exp\left(-\frac{b_\ell k}{\ell^{L_k}}\right) \le \e_\ell,\]
so \Cref{lem:central_pullback_general_order} applies. Hence
\[ \mu_{m(k)+r+1} \leq K_{\ell} \; \mu_{m(k)+r}^{1/\ell} \leq K_{\ell} \; M_{\ell} ^{1/\ell} \exp \left(-\frac{b_{\ell} k}{\ell^{r+1}}\right) \leq M_{\ell} \exp\left(-\frac{b_{\ell} k}{\ell^{r+1}}\right),\]
because $M_{\ell}^{1-1/\ell}\geq K_{\ell}$. This completes the induction.
\end{proof}

Now we are ready to prove \Cref{thm:acim_rotation_like}

\begin{proof}[Proof of \Cref{thm:acim_rotation_like}]
Define
\[ \eta_{\ell} \= \frac{b_{\ell}}{2 \log(M_{\ell} / \e_{\ell})}.\]
Since $M_{\ell} \ge 1$, and $\e_{\ell} \le 1/2$ this is a positive number. Suppose that
\[ \limsup_{n\to\infty} \frac{\ell^{a_{2n+1}}}{n} < \eta_{\ell},\]
and choose $\eta'$ strictly between this limsup and $\eta_{\ell}$. By \Cref{lem:continued_fraction_cascade_length}, for all sufficiently large $k$, we have $ \ell^{L_k} k^{-1} < \eta'$. When $a_1>2$, one has
\[ \frac{\ell^{L_k}}{k} \leq \frac{k-1}{k} \frac{\ell^{a_{2k-1}}}{k-1},\]
and the right-hand side has the same limiting upper bound as $\ell^{a_{2n+1}}/n$. Therefore
\[ \frac{k}{\ell^{L_k}} > \frac1{\eta_{\ell}} = \frac{2\log(M_{\ell}/\e_{\ell})}{b_{\ell}}.\]
Consequently,
\[ M_{\ell} \exp\left(-\frac{b_{\ell} k}{\ell^{L_k}}\right) < \e_{\ell}.\]
By \Cref{lem:propagation_noncentral_decay_general}, every sufficiently deep scaling factor satisfies $\mu_n\leq\e_{\ell} \leq \e(\ell)$. Then, \Cref{thm:Bruin_shen_vs_unimodal_acip_theorem} implies that $f$ admits an absolutely continuous invariant probability measure.
\end{proof}

\begin{remark}
For $\ell > 1$, if the odd partial quotients $a_{2n+1}$ are bounded, then
\[ \frac{\ell^{a_{2n+1}}}{n} \longrightarrow 0.\]
Thus, when $\ell=2$ and $f$ is the quadratic representative, \Cref{thm:acim_rotation_like} recovers
\cite[Theorem~1.4]{Ble_2204_external_arguments_and_invariant_measures}. More generally, the theorem also permits unbounded odd partial quotients. For example, its hypothesis is satisfied whenever
\[ a_{2n+1}=o(\log n).\]
\end{remark}


\section{Nonexistence of an acip}
\label{sec:non_existence_of_acip}

In this section, we prove \Cref{thm:general_arithmetic_nonexistence_acip}.

Fix $\gamma\in(0,1/2)\setminus\Q$ and $\ell>1$, and let $f\in\cS_\gamma^\ell$ have turning point $c$. Write $K(f)=e_1e_2e_3\cdots$ for the kneading sequence of the critical value $c_1$. Thus $e_j=\iota_0(c_j)=\iota_j(c)$, for every $j \ge 1$. For every $k\geq1$, define
\[ \cJ_k = \left\{ x\in I: \iota_j(x)=e_j \text{ for every }1\leq j\leq S_k-1 \right\},\]
and let $J_k$ be the closure of the connected component of $\cJ_k$ containing $c$.

For the cylinder sets below, we use the convention that $\iota_j(x)=C$ whenever $f^j(x)=c$. In particular, a point whose orbit meets $c$ at some time $1\leq j\leq S_k-1$ does not belong to $\cJ_k$.

\begin{lemma}
    \label{lem:critical_cylinder_connected}
For every $k\geq1$ and every $x\in\cJ_k$, one has $[c,x]\subset\cJ_k$, where $[c,x]$ denotes the closed interval with endpoints $c$ and $x$. Consequently, $\cJ_k$ is an interval containing $c$, and  $J_k=\overline{\cJ_k}$. Moreover, if $m\geq1$ satisfies $R(m)\geq S_k$, then $[c,f^m(c)]\subset J_k$. In particular,
\[ |J_k| \ge |f^m(c)-c|.\]
\end{lemma}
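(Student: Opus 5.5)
The first assertion follows from monotonicity of the iterates on the half-line segment between $c$ and $x$. Fix $x\in\cJ_k$ and let $y$ lie strictly between $c$ and $x$. I will show by induction on $j$, for $1\le j\le S_k-1$, that $f^j$ is strictly monotone on $[c,x]$ and that $f^j(y)$ lies strictly between $c_j$ and $f^j(x)$.

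For $j=1$ this is immediate: $[c,x]$ lies on one side of $c$, where $f$ is strictly monotone, and $f^1(y)$ lies between $c_1$ and $f(x)$.

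For the inductive step, assume $f^j$ is strictly monotone on $[c,x]$ with $j\le S_k-2$. The key point is that $c\notin f^j([c,x])$ except possibly at the endpoint $c_j$.
\begin{itemize}
\item The endpoint $c_j$ is not $c$, since $c$ is nonperiodic.
\item We have $\iota_j(x)=e_j=\iota_0(c_j)$, so $f^j(x)$ and $c_j$ lie on the same side of $c$.
\item Hence the whole interval $f^j([c,x])=[c_j,f^j(x)]$ lies on that side and does not contain $c$.
\end{itemize}
Thus $f$ is strictly monotone on $f^j([c,x])$, and therefore $f^{j+1}$ is strictly monotone on $[c,x]$. Moreover, $f^{j}(y)$ lies strictly inside $f^j([c,x])$, so $f^j(y)\ne c$ and $\iota_j(y)=e_j$. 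This gives $y\in\cJ_k$.

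It remains to treat the endpoint $c$ itself. It has $\iota_j(c)=e_j$ for every $1\le j\le S_k-1$, because $c_j\ne c$, so $c\in\cJ_k$. Hence $[c,x]\subset\cJ_k$. It follows that $\cJ_k$ is a union of intervals all containing $c$, so it is an interval containing $c$. Its connected component through $c$ is therefore all of $\cJ_k$, and $J_k=\overline{\cJ_k}$.

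For the second assertion, let $m\ge1$ with $R(m)\ge S_k$. By the definition of $R$, we have $e_{m+r}=e_r$ for $1\le r\le S_k-1$. Since $e_{m+r}=\iota_0(c_{m+r})=\iota_r(f^m(c))$, the point $f^m(c)$ has $\iota_r(f^m(c))=e_r$ on this range. Again $f^{r}(f^m(c))=c_{m+r}\ne c$ by nonperiodicity, so no symbol $C$ appears and $f^m(c)\in\cJ_k$. The first part then gives $[c,f^m(c)]\subset\cJ_k\subset J_k$, and the length bound $|J_k|\ge|f^m(c)-c|$ follows immediately.

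The only point needing care is the nonperiodicity of $c$, which keeps the endpoint $c_j$ away from $c$ and rules out the symbol $C$. This holds by \Cref{cor:uniform_recurrence_long_branched_sturmian}, since $f$ satisfies the contraction principle, or more directly because the kneading sequence is aperiodic. Apart from this, the proof is the standard monotonicity argument for cylinder sets.
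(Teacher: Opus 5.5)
Your proof is correct and follows essentially the paper's argument. The paper phrases the monotonicity step as a contradiction at a minimal time $r$ at which some point of $[c,x]$ hits $c$, whereas you run the same reasoning as a forward induction. One small indexing point: as written, your inductive step only covers $j\le S_k-2$. The conclusion $\iota_j(y)=e_j$ at the last index $j=S_k-1$, including the case $S_k=2$, follows by applying your bullet argument once more at that index.
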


\begin{proof}
Fix $k\geq1$ and put $N=S_k-1$. Since the turning point is nonperiodic, its positive orbit does not meet $c$. Thus $c\in\cJ_k$.

Let $x\in\cJ_k$. The conclusion is immediate when $x=c$, so suppose that $x\neq c$ and let $K=[c,x]$. Notice that $K$ is contained in one of the two closed monotonicity intervals of $f$, with $c$ as one of its endpoints.

We first claim that $f^j(y)\neq c$ for every $y\in K$ and every $1\leq j\leq N$. Suppose otherwise. Choose the smallest integer $r\in\{1,\ldots,N\}$ for which there exists $y\in K$ satisfying $f^r(y)=c$. The point $y$ cannot be either endpoint of $K$. Indeed, the turning point $c$ is nonperiodic, while $x\in\cJ_k$ implies that $f^j(x)\neq c$ for every $1\leq j\leq N$. Hence $y\in\interior(K)$. 

By the minimality of $r$, $c\notin f^j(K)$ for every $1\leq j<r$. Since $f$ is strictly monotone on $K$ and on every interval $f^j(K)$, for $1\leq j<r$, it follows that $f^r|_K$ is strictly monotone. As $y\in\interior(K)$ and $f^r(y)=c$, strict monotonicity implies that $c$ lies strictly between $f^r(c)$ and $f^r(x)$.

On the other hand, since $x\in\cJ_k$, one has $\iota_r(x)= \iota_r(c) = e_r$. Thus $f^r(x)$ and $f^r(c)$ lie strictly on the same side of $c$, which contradicts the preceding conclusion. This proves the claim.

Now let $z\in K$. For every $1\leq j\leq N$, the set $f^j(K)$ is an interval that contains $f^j(c)$ and does not contain $c$. It is therefore contained entirely in one of the two components of $I\setminus\{c\}$. Consequently, $f^j(z)$ and $f^j(c)$ lie on the same side of $c$, and hence $\iota_j(z) = \iota_j(c) =e_j$ for every $1\leq j\leq N$. Thus $z\in\cJ_k$. Since $z\in K$ was arbitrary, we obtain $[c,x]\subset\cJ_k$.

It follows that $\cJ_k$ is an interval containing $c$, and thus every point of $\cJ_k$ is joined to $c$ by an interval contained in $\cJ_k$. Since $J_k$ was defined as the closure of the connected component of $\cJ_k$ containing $c$, we conclude that $J_k=\overline{\cJ_k}$.

Finally, suppose that $R(m)\geq S_k$. By the definition of the first disagreement time, $e_{m+j}=e_j$ for every $1\leq j\leq S_k-1$. Since $\iota_j\bigl(f^m(c)\bigr) = e_{m+j}$, we have $\iota_j\bigl(f^m(c)\bigr)=e_j$ for every $1\leq j\leq S_k-1$. Therefore $f^m(c)\in\cJ_k$. Applying the first part of the lemma with $x=f^m(c)$ gives
\[  [c,f^m(c)] \subset \cJ_k \subset J_k.\]
Taking lengths yields $|J_k| \ge |f^m(c)-c|$, as required.
\end{proof}

Put $E_k \= \interior(J_k\setminus J_{k+1})$, and $g_k \=|J_k|$. The sets $E_k$ may have two connected components, we denote by $|E_k|$ their total length. By definition $J_{k+1}\subset J_k$, and 
\[ \bigcap_{k\geq1}J_k=\{c\}.\]
Indeed, if the intersection is not a singleton, it contains a nondegenerate interval with $c$ as one of its boundary points. Then all iterates of $f$ would be monotone on that interval, so it would be a homterval. Such an interval is either wandering or eventually mapped into the basin of a periodic attractor. Both alternatives are excluded by our standing assumptions. Therefore $g_k \to 0$, and $|E_k|=g_k-g_{k+1}$.

We start recalling the finiteness criterion for the long-branched inducing scheme. The following theorem is due to Bruin \cite{Bruin1994_top_condition_for_the_existence_of_acip}. See also \cite[Proposition~2.9 and Corollary~2.10]{Ble_2204_external_arguments_and_invariant_measures}.

\begin{theorem}
\label{thm:long_branched_inducing_criterion}
Let $f$ be a long-branched $S$-unimodal map with recurrent nonperiodic turning point and no attracting periodic orbit. Let $J_k$ and $E_k$ be the kneading neighborhoods and layers defined above. Then the long-branched Markov inducing scheme admits an absolutely continuous invariant measure on its inducing domain. Its lift to the original system is finite if and only if
\begin{equation}
    \label{eq:long_branched_lift_integrability}
    \sum_{k\geq1}k|E_k|<+\infty.
\end{equation}
Consequently, $f$ admits an absolutely continuous invariant probability measure if and only if \cref{eq:long_branched_lift_integrability} holds.
\end{theorem}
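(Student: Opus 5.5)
The plan is to realize the induced system on the Hofbauer tower $\{D_n\}$, use the long-branched hypothesis to get uniform Koebe space, and then convert integrability of the inducing time into the summability \eqref{eq:long_branched_lift_integrability}.

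\textbf{Construction and the induced acip.} Since $Q$ is bounded (here $Q\leq 1$), there is $\delta>0$ with $|D_n|\geq\delta$ for every $n$. I would fix a base interval $Y$ in the tower, for instance a level $D_{n_0}$ near $c$, and let $F=f^{\tau}\colon\bigcup_i Y_i\to Y$ be the first return to $Y$ along the tower. Each branch $f^{\tau}|_{Y_i}$ is then a diffeomorphism onto a tower level of length at least $\delta$. Such a level contains a uniformly scaled neighbourhood of the image of the next-smaller domain. Because $f$ is $S$-unimodal, \Cref{lem:real_Koebe_principal_nest} (with $C=1$, since negative Schwarzian does not decrease cross-ratios) gives uniformly bounded distortion for all branches. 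The Folklore (Adler) theorem then provides an $F$-invariant measure $\nu\ll\mathrm{Leb}$ whose density is bounded above and below on $Y$. The only place where the absence of attractors and wandering intervals enters is to ensure that the domains $Y_i$ cover $Y$ up to a null set, via the contraction principle.

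\textbf{Finiteness criterion for the lift.} The lift $\mu=\sum_{n\ge0}f^n_*(\nu|_{\{\tau>n\}})$ is finite if and only if $\int\tau\,d\nu<\infty$. By the two-sided density bounds this is equivalent to $\int_Y\tau\,d\mathrm{Leb}<\infty$. Conversely, if $f$ has an acip, then, since $c$ is recurrent and $f$ is $S$-unimodal, the acip is unique and liftable to the tower by Keller's theorem. Its restriction to $Y$ is then $F$-invariant and absolutely continuous, so it coincides with $\nu$ up to scaling, and Kac's formula forces $\int\tau\,d\nu<\infty$. This yields the final equivalence in the statement.

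\textbf{Comparing $\tau$ with the layers $E_k$.} I would decompose $Y$ according to how long a point shadows the critical orbit. A point of $E_k$ follows the itinerary of $c$ exactly up to some time in $[S_k,S_{k+1})$, because it lies in $J_k$ but not in $J_{k+1}$. During this time its tower lift rides the central column, and at the first disagreement it falls to a level $D_{S_{k}-S_{Q(k+1)}}$-type domain of length at least $\delta$. From there it reaches $Y$ in a time whose Lebesgue expectation is uniformly bounded. I would justify this last point by Mañé's theorem (uniform expansion away from $c$) combined with the bounded distortion above, which gives exponential tails for the return time of points not close to $c$. Granting this, the contribution of $E_k$ to $\int_Y\tau\,d\mathrm{Leb}$ is $\asymp S_k|E_k|$ plus a term bounded by a constant times $|E_k|$. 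Since $S_k=1+k+\lfloor k\omega\rfloor\asymp k$, this gives $\int_Y\tau\asymp\sum_k k|E_k|+O(1)$.

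\textbf{Main obstacle.} The hardest step is this last one: proving that the excursion after leaving the central column contributes only $O(|E_k|)$ on average, uniformly in $k$. Points may re-approach $c$ and start a new shadowing run, so the estimate must be made recursive. My plan is to bound the expected additional time by a renewal-type series, using bounded distortion to show that the conditional distribution on the large level after the fall is comparable to Lebesgue. That makes the probability of re-entering a layer $E_{k'}$ comparable to $|E_{k'}|$. Consequently the total excursion time is finite exactly when $\sum k'|E_{k'}|<\infty$, which is consistent with the claimed criterion.
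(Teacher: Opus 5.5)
The paper gives no proof of this statement. It quotes it from Bruin \cite{Bruin1994_top_condition_for_the_existence_of_acip} (see also Bl\'e's Proposition~2.9 and Corollary~2.10). Your outer framework is the standard one: a Markov induced map, Koebe space from $\inf_n|D_n|>0$, the Folklore theorem, finiteness of the lift iff $\int\tau\,d\nu<\infty$, and Keller lifting plus Kac for the converse. The genuine gap is the step that actually produces the criterion, namely the comparison $\int_Y\tau\,d\mathrm{Leb}\asymp\sum_k k|E_k|+O(1)$, and specifically its upper bound. You justify it by claiming that, after leaving the column, a point reaches $Y$ in uniformly bounded expected time, via Ma\~n\'e's theorem and bounded distortion. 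This claim is circular. Ma\~n\'e only controls orbits that avoid a fixed neighbourhood of $c$, and these orbits do not avoid it. Write $z_k$ for the closest precritical points, so that $J_k=[z_{k-1},\hat z_{k-1}]$ and $E_k=(z_{k-1},z_k)\cup(\hat z_k,\hat z_{k-1})$. Then $f^{S_{k-1}}$ maps $(z_{k-1},z_k)$ diffeomorphically onto the interval between $c$ and some $w_k\in\{z_{Q(k)},\hat z_{Q(k)}\}$. Indeed, $f^{S_{Q(k)}}(w_k)=c$ and no earlier preimage of $c$ lies in between. So already at time $S_{k-1}$, before the fall at time $S_k$, the points of $E_k$ fill one half of every layer $E_{k'}$ with $k'>Q(k)$, and they start a new shadowing run of arbitrary depth. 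A uniform bound on the expected time to reach $Y$ from such a distribution is essentially the summability you are trying to prove.

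Your renewal repair points in the right direction, but it omits exactly the inputs that are needed.
\begin{enumerate}[(i)]
\item You need comparability with Lebesgue of the distribution at the start of each run. Koebe does not give this after the fall: $(f^{S_k})'(z_{k-1})=0$ because $f^{S_{k-1}}(z_{k-1})=c$, so the pushed-forward density has a critical singularity.
\item You need a uniform positive probability of reaching $\widehat Y$ between consecutive runs, so that the number of runs per induced step has bounded mean.
\item Separately, you need that almost every point of $Y$ returns to $\widehat Y$ at all. This is a conservativity property of the tower and does not follow from the contraction principle alone.
\end{enumerate}

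The cleaner route, and the reason the criterion is phrased through the layers, is to take the layers themselves as branch domains. Set $F|_{E_k}=f^{S_{k-1}}$. This is a Markov map on the fixed neighbourhood $J_1$ of $c$. Its finitely many images are the one-sided intervals between $c$ and $z_j$ or $\hat z_j$, for $0\le j\le\sup Q$. Its inducing time on $E_k$ is $S_{k-1}\asymp k$ by construction. Once you show that the $F$-invariant density is bounded above and below (this is where the long branches and the Koebe principle are really used), integrability of the inducing time is literally $\sum_k k|E_k|<\infty$. No excursion or renewal estimate is needed.
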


We will use the following consequence of the previous theorem.

\begin{proposition}
\label{prop:real_kneading_neighborhood_acip_criterion}
The map $f$ admits an absolutely continuous invariant probability measure if and only if
\[ \sum_{k\geq1}|J_k|<+\infty.\]
\end{proposition}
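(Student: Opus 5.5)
This follows from \Cref{thm:long_branched_inducing_criterion} by an Abel summation identity. First we check that the hypotheses of that theorem hold for $f$. By the standing assumptions, $f\in\cS_\gamma^\ell$ is $S$-unimodal with no attracting or parabolic periodic orbit, and it satisfies the contraction principle. By \Cref{cor:uniform_recurrence_long_branched_sturmian}, the turning point is recurrent and nonperiodic. Since $Q_\gamma\le 1$ is bounded, $f$ is long-branched. Hence $f$ admits an acip if and only if $\sum_{k\ge1}k|E_k|<+\infty$. It therefore suffices to prove
\[ \sum_{k\geq1}k|E_k| = \sum_{k\geq1} g_k, \]
allowing both sides to be $+\infty$.

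We use $|E_k|=g_k-g_{k+1}$ and $g_k\downarrow 0$, both established just before \Cref{thm:long_branched_inducing_criterion}. Fix $N\ge1$ and sum by parts:
\[ \sum_{k=1}^{N}k(g_k-g_{k+1}) = \sum_{k=1}^{N}g_k - N g_{N+1}. \]
All terms are nonnegative, so this already gives $\sum_{k\le N}k|E_k|\le \sum_{k\le N}g_k$.

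For the reverse inequality, suppose first that $\sum_k g_k<\infty$. Then the bound above shows that $\sum_k k|E_k|$ converges. Conversely, suppose that $\sum_k k|E_k|<\infty$. Since $g_{N+1}=\sum_{k>N}|E_k|$ because $g_k\to0$, we get
\[ N g_{N+1}\le \sum_{k>N}k|E_k|\longrightarrow 0. \]
Hence $\sum_{k\le N}g_k=\sum_{k\le N}k|E_k|+Ng_{N+1}$ is bounded, and $\sum_k g_k<\infty$. Letting $N\to\infty$ in either case gives equality of the two sums, so the two series converge or diverge together.

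The only delicate point is the tail estimate $Ng_{N+1}\to0$. It uses exactly that $g_k\to0$, which comes from $\bigcap_k J_k=\{c\}$ (the absence of homtervals). Without it, $g_{N+1}$ would not equal the tail sum of $|E_k|$. Everything else is routine. With the equivalence established, the proposition follows immediately from \Cref{thm:long_branched_inducing_criterion}.
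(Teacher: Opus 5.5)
Your proposal is correct and follows essentially the same route as the paper. You reduce to \Cref{thm:long_branched_inducing_criterion} and compare the two series via the summation-by-parts identity $\sum_{k\le N}k|E_k|=\sum_{k\le N}g_k-Ng_{N+1}$, together with the tail bound $Ng_{N+1}\le\sum_{k>N}k|E_k|$, which relies on $g_k\to0$. Your explicit check of the theorem's hypotheses is a small addition that the paper leaves implicit.
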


\begin{proof}
By \Cref{thm:long_branched_inducing_criterion}, it suffices to prove that \eqref{eq:long_branched_lift_integrability} holds if and only if $\sum_{k\geq1}g_k<+\infty$.
Since $|E_k|=g_k-g_{k+1}$, we have
\begin{equation}
    \label{eq:summation_by_parts}
    \sum_{k=1}^{N}k|E_k| = \sum_{k=1}^{N}g_k-Ng_{N+1}.
\end{equation}
If $\sum_{k\geq1}g_k<+\infty$, the left-hand series converges. Conversely, if \eqref{eq:long_branched_lift_integrability} holds, then
\[ Ng_N \leq \sum_{k\geq N}k|E_k| \longrightarrow 0.\]
Letting $N\to\infty$ in \eqref{eq:summation_by_parts} give us
\[ \sum_{k\geq1}g_k = \sum_{k\geq1}k|E_k| <+\infty.\]
The conclusion follows.
\end{proof}

Let $\gamma = [a_1,a_2, \ldots]$ be the continued fraction expansion of $\gamma$, and let $\frac{p_n}{q_n} = [a_1, \ldots, a_n]$, be its $n$th convergent. The following proposition gives a dynamical criterion for nonexistence of an acip.
\begin{proposition}
\label{prop:dynamical_nonexistence_acip_rotation_like}
If
\[ \sum_{n\geq1} a_{2n+1}(q_{2n}-p_{2n}) |f^{q_{2n}}(c)-c| = +\infty,\]
then $f$ does not admit an absolutely continuous invariant probability measure.
\end{proposition}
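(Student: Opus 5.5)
We argue through the contrapositive of \Cref{prop:real_kneading_neighborhood_acip_criterion}: we show that the hypothesis forces $\sum_{k\geq1}|J_k|=+\infty$, so $f$ has no acip. The link between the critical orbit and the kneading neighborhoods is \Cref{lem:critical_cylinder_connected}: if $R(m)\geq S_k$, then $|J_k|\geq|f^m(c)-c|$. We take $m=q_{2n}$. By \cref{eq:disagreement_time_even_convergent} we have $R(q_{2n})=q_{2n+1}$ for $n\geq1$, and by \cref{eq:odd_q_as_cutting_time} we have $q_{2n+1}=S_{d_n}$ with $d_n=q_{2n+1}-p_{2n+1}$. Since $S_k$ is increasing, $R(q_{2n})\geq S_k$ for every $k\leq d_n$. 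Hence
\[ |J_k|\geq |f^{q_{2n}}(c)-c| \qquad \text{for every } 1\leq k\leq d_n. \]

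Next we count the indices $k$ attached to level $n$ so that different levels do not overlap. Put $d'_n=q_{2n}-p_{2n}$. Then $d'_n$ increases with $n$, and the recursion \cref{eq:continued_fraction_recursion} gives
\[ d_n=a_{2n+1}d'_n+(q_{2n-1}-p_{2n-1}), \qquad d'_{n+1}=a_{2n+2}d_n+d'_n. \]
In particular $d_n\geq a_{2n+1}d'_n$, so $d_n-d'_n\geq(a_{2n+1}-1)d'_n$. More precisely, we use the block of indices $k\in(d'_n,d_n]$, which has length $(a_{2n+1}-1)d'_n+(q_{2n-1}-p_{2n-1})$. We expect this to be at least a fixed fraction of $a_{2n+1}d'_n$. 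When $a_{2n+1}=1$ the block still has length $q_{2n-1}-p_{2n-1}$, and we will compare this with $d'_n\leq d'_{n-1}+a_{2n}d_{n-1}$. If that comparison fails, we fall back on coarser constants using $d'_n\leq(a_{2n}+1)d_{n-1}$.

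The blocks $(d'_n,d_n]$ for different $n$ are disjoint, because $d_n\leq d'_{n+1}$. Summing over them gives
\[ \sum_{k\ge1}|J_k|\ \ge\ \sum_{n\ge1}(d_n-d'_n)\,|f^{q_{2n}}(c)-c|. \]
This series should be comparable to $\sum a_{2n+1}d'_n|f^{q_{2n}}(c)-c|$, which diverges by hypothesis.

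The main obstacle is the case $a_{2n+1}=1$, where $d_n-d'_n$ may be much smaller than $a_{2n+1}d'_n=d'_n$. A cleaner route avoids the subtraction altogether. Since the $J_k$ are nested, $|J_k|$ is nonincreasing in $k$. So for each $n$, all $k\leq d_n$ satisfy $|J_k|\geq\max_{n'\geq n}|f^{q_{2n'}}(c)-c|$. We then split $\sum_k|J_k|$ over the blocks $(d_{n-1},d_n]$, which have length $d_n-d_{n-1}\geq a_{2n+1}d'_n+(q_{2n-1}-p_{2n-1})-d_{n-1}$. Using $d'_n\geq d_{n-1}$ (from $a_{2n}\ge1$), this is comparable to $a_{2n+1}d'_n$ up to a factor of at most $2$ when $a_{2n+1}\geq2$. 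For $a_{2n+1}=1$ we will instead absorb the term $d'_n|f^{q_{2n}}(c)-c|$ into the neighbouring block, again with a constant factor loss. We will check this absorption carefully; it is the key elementary continued-fraction estimate in the proof.
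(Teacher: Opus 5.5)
Your ``cleaner route'' is the paper's proof: split $\sum_k|J_k|$ over the blocks $(d_{n-1},d_n]$. On each block, $R(q_{2n})=q_{2n+1}=S_{d_n}$, \Cref{lem:critical_cylinder_connected} and nestedness give $|J_k|\geq|f^{q_{2n}}(c)-c|$, and then you sum. As written, though, the proposal is unfinished. You treat $a_{2n+1}=1$ as the main obstacle and defer it to an ``absorption'' step you never carry out.

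That obstacle does not exist. The block length you wrote down, $a_{2n+1}d'_n+(q_{2n-1}-p_{2n-1})-d_{n-1}$, collapses exactly, because by your own definition $d_{n-1}=q_{2n-1}-p_{2n-1}$. Equivalently, the recursion gives
\[ d_n-d_{n-1}=(q_{2n+1}-q_{2n-1})-(p_{2n+1}-p_{2n-1})=a_{2n+1}(q_{2n}-p_{2n})=a_{2n+1}d'_n \]
for every value of $a_{2n+1}$. Since $d_n$ is strictly increasing and $d_0=a_1-1\geq1$, the blocks are disjoint, and therefore
\[ \sum_{k\geq1}|J_k|\ \geq\ \sum_{n\geq1}\ \sum_{d_{n-1}<k\leq d_n}|J_k|\ \geq\ \sum_{n\geq1}a_{2n+1}(q_{2n}-p_{2n})\,|f^{q_{2n}}(c)-c|=+\infty. \]
This needs no factor-of-$2$ loss, no case split, and no $\max_{n'\geq n}$ refinement, and \Cref{prop:real_kneading_neighborhood_acip_criterion} finishes the proof.

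You should also drop your first route through the blocks $(d'_n,d_n]$; it genuinely fails. When $a_{2n+1}=1$ that block has length $d_{n-1}$, while the weight you need is $d'_n=a_{2n}d_{n-1}+d'_{n-1}$. The ratio is therefore at most $1/a_{2n}$, which is not bounded below by any constant when the even partial quotients are large.
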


\begin{proof}
For every $n\geq0$, put  $d_n=q_{2n+1}-p_{2n+1}$. By \Cref{thm:explicit_co_cutting_times_rotation_like}, we have $R(q_{2n}) = q_{2n+1} = S_{d_n}$. Hence \Cref{lem:critical_cylinder_connected} gives $[c,f^{q_{2n}}(c)] \subset J_{d_n}$. Therefore
\[ g_{d_n} = |J_{d_n}| \geq |f^{q_{2n}}(c)-c|.\]
Now, observe that 
\[ d_n-d_{n-1} = (q_{2n+1}-p_{2n+1}) -(q_{2n-1}-p_{2n-1}) = a_{2n+1}(q_{2n}-p_{2n}).\]
Since the intervals $J_k$ are nested, we have $g_k\geq g_{d_n}$ whenever $d_{n-1}<k\leq d_n$. Therefore
\[  \sum_{k\geq1}g_k \geq \sum_{n\geq1} (d_n-d_{n-1})g_{d_n} \geq 
\sum_{n\geq1} a_{2n+1}(q_{2n}-p_{2n})|f^{q_{2n}}(c)-c|.\]
The conclusion follows from \Cref{prop:real_kneading_neighborhood_acip_criterion}.
\end{proof}

The following quantitative trajectory-recurrence estimate is a consequence of \cite[Appendix]{Nowicki-Przytycki1998_top_inv_of_CE_property}.

\begin{lemma}
\label{lem:quantitative_trajectory_recurrence}
Let $f \colon I \to I$ be a $C^1$ unimodal map with a unique  critical point $c$. Assume that $c$ is nonperiodic, that $f$ has no attracting periodic orbit, and there are constants $\alpha \in (0,1]$ and $H \ge 1$ such that $|f'(x)| \le H |x-c|^\alpha$ for every $x \in I$. Then
\[ |f^m(c)-c| \geq e^{-1}\Lambda(\alpha,H)^{-m}\]
for every $m\geq1$, where
\[ \Lambda(\alpha,H) \= \exp\left( \frac{ \log2+\log H+\alpha-\log(1-e^{-1}) }{\alpha}\right).\]
\end{lemma}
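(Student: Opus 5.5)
The plan is to run the Nowicki--Przytycki ``close return forces expansion'' argument from the appendix of their paper. Fix $m\geq1$ and put $\delta\=|f^m(c)-c|$. Without loss of generality $\delta<e^{-1}$, since otherwise the bound is trivial because $\Lambda(\alpha,H)\geq1$. Consider the interval $U\=[c-\delta',c+\delta']$ with $\delta'$ comparable to $\delta$, say $\delta'=\delta$. The idea is that $f^m$ maps a small neighbourhood of $c$ close to itself in a way that would create an attracting (or at least non-repelling) periodic orbit, unless $\delta$ is not too small relative to $m$.

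The first step is the contraction estimate along the critical orbit near $c$. For $x$ with $|x-c|\leq r$ we have $|f(x)-f(c)|\leq \int |f'|\leq H r^{1+\alpha}$. Next we bound the size of $f^j(U)$ inductively by the mean value theorem: $|f^j(U)|\leq \sup|f'|^{j-1}\cdot|f(U)|$. Using only the crude bound $|f'|\leq H|I|^\alpha\leq H$ (normalising $|I|\leq1$), we get $|f^m(U)|\leq H^{m}\,2\delta^{1+\alpha}$. Roughly, this is at most $2H^m\delta^{1+\alpha}$.

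The second step is to choose the comparison so that $f^m(U)\subset U$. We have $f^m(c)\in\partial$-region at distance $\delta$ from $c$, so $f^m(U)$ lies within $\delta+|f^m(U)|$ of $c$. We need a slightly larger interval: take $U=[c-2\delta,c+2\delta]$. Then $f^m(U)$ is an interval containing $f^m(c)$ of length at most $2H^m(2\delta)^{1+\alpha}$. If this length is at most $\delta$, then $f^m(U)\subset U$. By Brouwer, $f^m$ has a fixed point in $U$. Moreover, $f^m|_U$ maps $U$ into itself and has a critical point, so the iterates $f^{mk}(U)$ stay in $U$. A $C^1$ interval map sending an interval into itself with the derivative controlled by the critical factor $H|x-c|^\alpha$ then contracts. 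The derivative of $f^m$ on $U$ is at most $H(2\delta)^\alpha\cdot H^{m-1}$, which is less than $1$ under the same smallness condition. Hence $f^m|_U$ is a contraction, giving an attracting periodic orbit whose basin contains $c$, and contradicting the hypothesis. So we must have $2H^m(2\delta)^{1+\alpha}>\delta$, i.e.\ $\delta^\alpha>(2^{2+\alpha}H^m)^{-1}$. This gives $\delta\geq (2^{3}H)^{-m/\alpha}$ up to constants, which is of the form $e^{-1}\Lambda^{-m}$ with $\log\Lambda=(\log2+\log H+O(1))/\alpha$.

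The main obstacle is matching the exact constant $\Lambda(\alpha,H)$, including the terms $\alpha$ and $-\log(1-e^{-1})$ and the prefactor $e^{-1}$. This presumably comes from the appendix's more careful telescoping, in which the neighbourhood radius is $\delta/(1-e^{-1})$-type and summing a geometric series in $e^{-1}$ is needed. I would first get the qualitative exponential bound as above, and then tune the radius choice (for instance a neighbourhood of radius $e\delta$ and summing distortion over the orbit) to reproduce the stated formula, or simply accept any constant of that form, since only existence of some $\Lambda_f$ matters downstream.
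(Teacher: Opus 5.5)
Your mechanism is the right one. It is the trapping argument behind the Nowicki--Przytycki estimate, which the paper cites instead of giving a proof: if $f^m$ maps a small interval around $c$ into itself with $|(f^m)'|<1$, there is an attracting cycle.

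\textbf{The constant is not actually obtained.} The statement fixes $\Lambda(\alpha,H)$ explicitly, and the paper uses its value $\Lambda(1,8)=16e^2/(e-1)$ in Lemma~\ref{lem:universal_quadratic_lower_recurrence}. With radius $2\delta$ you only get $\delta>(2^{2+\alpha}H^m)^{-1/\alpha}$, and this does not imply the claim for small $\alpha$. For $\alpha=0.1$, $H=1$, $m=1$ it gives $\delta>2^{-21}\approx 4.8\cdot 10^{-7}$, whereas $e^{-1}\Lambda(0.1,1)^{-1}\approx 1.3\cdot 10^{-6}$. So ``up to constants'' does not prove the lemma as stated.

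The repair you guess works directly, with no telescoping or distortion summation. Put $r=e\delta$ and $U=[c-r,c+r]\cap I$. Intersecting with $I$ is needed because $c$ may lie within $r$ of $\partial I$, and $f^m(U)\subset I$ holds anyway. Then $|f^m(U)|\le H^{m-1}\cdot 2r\cdot Hr^{\alpha}$. Hence $f^m(U)\subset U$ whenever $\delta+2H^m r^{1+\alpha}\le r$, that is, whenever $2H^m r^{\alpha}\le 1-e^{-1}$. In that case also $|(f^m)'|\le H^m r^{\alpha}<1$ on $U$, which gives an attracting cycle. Therefore $2H^m(e\delta)^{\alpha}>1-e^{-1}$, that is,
\[
\delta> e^{-1}\Bigl(\frac{1-e^{-1}}{2H^m}\Bigr)^{1/\alpha}\ge e^{-1}\Bigl(\frac{1-e^{-1}}{2H}\Bigr)^{m/\alpha}\ge e^{-1}\Bigl(\frac{1-e^{-1}}{2He^{\alpha}}\Bigr)^{m/\alpha}=e^{-1}\Lambda(\alpha,H)^{-m}.
\]
The middle step uses $2/(1-e^{-1})>1$. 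The factor $e^{\alpha}$ in $\Lambda$ is just slack.

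\textbf{The condition $|I|\le 1$ is a hypothesis, not a normalisation.} The rescaling $x\mapsto x/s$ divides distances by $s$ but replaces $H$ by $Hs^{\alpha}$, and $\Lambda(\alpha,Hs^{\alpha})=s\Lambda(\alpha,H)$. So the conclusion changes by a factor $s^{1-m}$ and is not scale-invariant. The bound $\sup_I|f'|\le H$, on which your whole estimate rests, genuinely needs $|x-c|\le 1$ on $I$. It must be read as an implicit hypothesis of the lemma. This matches the paper's usage: in the proofs of Lemma~\ref{lem:map_dependent_exponential_lower_recurrence} and Lemma~\ref{lem:universal_quadratic_lower_recurrence}, the lemma is applied only after rescaling the phase interval to unit length. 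With these two adjustments your proof is complete.
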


As a consequence, we obtain an exponential lower bound for the recurrence of the turning point of every map under consideration.

\begin{lemma}
\label{lem:map_dependent_exponential_lower_recurrence}
There are constants $C_f>0$ and $\Lambda_f>1$ such that
\[ |f^m(c)-c| \geq C_f\Lambda_f^{-m}\]
for every $m\geq1$.
\end{lemma}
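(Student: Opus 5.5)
The plan is to deduce the statement directly from \Cref{lem:quantitative_trajectory_recurrence}. We need to check that $f\in\cS_\gamma^\ell$ meets its hypotheses for suitable constants $\alpha\in(0,1]$ and $H\geq1$. Then we can take $C_f=e^{-1}$ and $\Lambda_f=\Lambda(\alpha,H)$. This $\Lambda_f$ is automatically $>1$, because $\log 2+\alpha-\log(1-e^{-1})>0$ and $\log H\geq0$.

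The qualitative hypotheses are already available.
\begin{itemize}
\item $f$ is $C^1$ with unique critical point $c$, by the standing definition of $S$-unimodal maps.
\item $f$ has no attracting periodic orbit, by the standing assumption recalled in \cref{subsec:unimodal_preliminaries}.
\item $c$ is nonperiodic, by \Cref{cor:uniform_recurrence_long_branched_sturmian}, since $f$ satisfies the contraction principle.
\end{itemize}

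The substantive step is the derivative bound $|f'(x)|\leq H|x-c|^\alpha$ on all of $I$. I would split $I$ into a small neighbourhood $V$ of $c$ and its complement.

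On $V$ I would use the non-flat normal form $|\psi(f(x))|=|\phi(x)|^\ell$. Differentiating gives
\[ |f'(x)| = \frac{\ell|\phi(x)|^{\ell-1}|\phi'(x)|}{|\psi'(f(x))|}.\]
Since $\phi,\psi$ are $C^3$ diffeomorphisms with $\phi(c)=0$, the quantities $|\phi'|$, $1/|\psi'|$ and $|\phi(x)|/|x-c|$ are bounded on a compact neighbourhood of $c$. Hence $|f'(x)|\leq H_1|x-c|^{\ell-1}$ on $V$.

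Off $V$ we have $|x-c|\geq\delta>0$ and $|f'|\leq\|f'\|_\infty$. It is then enough to take $H\geq\|f'\|_\infty\delta^{-\alpha}$.

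For the exponent we put $\alpha=\min\{\ell-1,1\}$, which lies in $(0,1]$ since $\ell>1$. On $V$, shrinking $V$ so that $|x-c|\leq1$ there, we get $|x-c|^{\ell-1}\leq|x-c|^\alpha$. If the interval has length exceeding $1$, the factor $|I|^{\ell-1-\alpha}$ is simply absorbed into $H$. Finally, set $H=\max\{1,H_1,\|f'\|_\infty\delta^{-\alpha}\}$.

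The only mildly delicate point is this local bound. It needs a uniform comparison of $|\phi(x)|$ with $|x-c|$ and of $\psi'$ near $f(c)$, which is immediate from the diffeomorphism property. The remaining care is in reconciling the case $\ell>2$ with the constraint $\alpha\leq1$, which the choice $\alpha=\min\{\ell-1,1\}$ and the adjusted constant handle. With these constants, \Cref{lem:quantitative_trajectory_recurrence} yields
\[ |f^m(c)-c|\geq e^{-1}\Lambda_f^{-m}\]
for every $m\geq1$, which is the claim.
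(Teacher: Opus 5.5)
Your proposal is correct and follows the paper's proof. Both derive $|f'(x)|\le H_1|x-c|^{\ell-1}$ near $c$ from the non-flat normal form, take $\alpha=\min\{1,\ell-1\}$, enlarge $H$ so the bound holds on the whole interval, and invoke \Cref{lem:quantitative_trajectory_recurrence}. The one difference is that the paper first normalizes the phase interval affinely and transfers the estimate back afterwards, which changes only $C_f$; you should insert this step rather than claiming $C_f=e^{-1}$ on the original $I$, because the bound $e^{-1}\Lambda^{-m}$ is not scale-invariant and the underlying Nowicki--Przytycki estimate is meant for a normalized interval, and your remark about absorbing $|I|^{\ell-1-\alpha}$ into $H$ does not address this.
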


\begin{proof}
By nonflatness, there are local $C^3$ diffeomorphisms $\phi$ and $\psi$, with $\phi(c)=0 = \psi(f(c))$, such that $|\psi(f(x))|=|\phi(x)|^\ell$ near $c$. Differentiating the critical normal form away from $c$ shows that
\[ |f'(x)| \le H_f|x-c|^{\ell-1}\]
for all $x$ sufficiently close to $c$. Set $\alpha_f \= \min\{1,\ell-1\}.$ After an affine normalization of the phase interval and an increase of $H_f$ if necessary, one has
\[ |f'(x)| \le H_f|x-c|^{\alpha_f} \]
on the entire normalized interval.

Applying \Cref{lem:quantitative_trajectory_recurrence} gives
\[ |f^m(c)-c| \ge C_f\Lambda_f^{-m} \]
for suitable constants $C_f>0$ and $\Lambda_f>1$. Returning from the normalized interval only changes the multiplicative constant $C_f$.
\end{proof}

We now prove \Cref{thm:general_arithmetic_nonexistence_acip}

\begin{proof}[Proof of \Cref{thm:general_arithmetic_nonexistence_acip}]
Let $f\in\cS_\gamma^\ell$. By \Cref{lem:map_dependent_exponential_lower_recurrence},
\[ \sum_{n\geq1} a_{2n+1}(q_{2n}-p_{2n}) |f^{q_{2n}}(c)-c| \ge C_f \sum_{n\geq1} a_{2n+1}(q_{2n}-p_{2n}) \Lambda_f^{-q_{2n}}.\]
If the series on the right diverges, then \Cref{prop:dynamical_nonexistence_acip_rotation_like} applies. The factor $C_f>0$ does not affect divergence.
\end{proof}

Bl\'e's nonexistence criterion assumes that, for every $0<\lambda<1$,
\begin{equation}
    \label{eq:Ble_nonexistence_condition_comparison}
    \limsup_{n\to\infty} a_{2n+1}q_{2n}\lambda^{q_{2n}} =+\infty;
\end{equation}
see \cite[Theorem~1.5]{Ble_2204_external_arguments_and_invariant_measures}. Our condition uses only the single explicit scale $\Lambda_{f}^{-1}$ and asks for divergence of a series rather than terms with infinite limsup of its terms. Hence Bl\'e's hypothesis implies ours, but not conversely. We now move to the quadratic case $\ell = 2$, and construct some exaple reflecting this fact.

As explained in the preliminaries, we identify the quadratic representative with its maximum-type affine conjugate when applying results stated for maps in $\cS_\gamma$.

First, let us prove that in the quadratic case, we can choose the scale $\Lambda_f$ uniformly.

\begin{lemma}
    \label{lem:universal_quadratic_lower_recurrence}
Set
\[ \Lambda_{\rm rec} \= \frac{16e^2}{e-1}.\]
Let $P_t(x)=x^2+t$, with $t\in[-2,-1)$, and assume that the critical point $0$ is nonperiodic and that $P_t$ has no attracting periodic orbit. Then
\[|P_t^m(0)| \geq e^{-1}\Lambda_{\rm rec}^{-m}\]
for every $m\geq1$.
\end{lemma}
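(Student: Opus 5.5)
The plan is to apply \Cref{lem:quantitative_trajectory_recurrence} directly to $P_t$, after an affine rescaling. The aim is to obtain the constants $\alpha=1$ and an explicit $H$ independent of $t$, and then to check that $\Lambda(1,H)$ equals $\Lambda_{\rm rec}$.

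First, fix the phase interval. For $t\in[-2,-1)$ the invariant interval of $P_t$ is $I_t=[-\beta_t,\beta_t]$, where $\beta_t=\frac{1+\sqrt{1-4t}}{2}\in(\frac{1+\sqrt5}{2},2]$ is the repelling fixed point. The map $P_t$ is a unimodal map of $I_t$ with minimum at $0$. After an orientation-reversing affine conjugacy it becomes a map with a maximum, and since \Cref{lem:quantitative_trajectory_recurrence} only uses $C^1$ regularity, the position of the critical point and derivative bounds, the conjugacy is harmless. The derivative bound is exact and global: $|P_t'(x)|=2|x|=2|x-0|^1$. Thus the hypothesis holds with $\alpha=1$ and $H=2$, uniformly in $t$, with no need to renormalize the interval. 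By assumption the critical point is nonperiodic and there is no attracting cycle, so the lemma applies and gives
\[ |P_t^m(0)|\ge e^{-1}\Lambda(1,2)^{-m}. \]

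Next, compute the scale:
\[ \Lambda(1,2)=\exp\bigl(\log 2+\log 2+1-\log(1-e^{-1})\bigr)=\frac{4e}{1-e^{-1}}=\frac{4e^2}{e-1}. \]
This is smaller than the claimed $\Lambda_{\rm rec}=\frac{16e^2}{e-1}$, so the stated bound follows a fortiori, because $\Lambda_{\rm rec}^{-m}\le\Lambda(1,2)^{-m}$. The extra factor $4$ in $\Lambda_{\rm rec}$ presumably comes from the authors normalizing the interval to unit length. Under the rescaling $x\mapsto x/(2\beta_t)$ one gets $|\tilde P'(y)|=2|2\beta_t y|\le 8|y|$, so $H=8$, and $\log 8+\log 2$ gives $16$. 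This also works, since the $e^{-1}$ constant then rescales by $2\beta_t\ge1$, which only helps.

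The main obstacle is making sure that \Cref{lem:quantitative_trajectory_recurrence} is applied under its exact normalization. If the appendix of Nowicki--Przytycki requires $I$ to be of length at most $1$ (or $[0,1]$), I would use the second route above. With $\tilde P(y)=P_t(2\beta_t y)/(2\beta_t)$ on an interval of length $1$, we have $H=4\beta_t\le 8$, and since $\Lambda$ is increasing in $H$ we may take $H=8$. This yields $\Lambda(1,8)=\frac{16e^2}{e-1}=\Lambda_{\rm rec}$ exactly. Undoing the scaling multiplies $|\tilde P^m(0)|$ by $2\beta_t>1$, so $|P_t^m(0)|\ge e^{-1}\Lambda_{\rm rec}^{-m}$ follows.
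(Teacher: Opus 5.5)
Your second route is the paper's proof: the paper affinely maps the core $[t,t^2+t]$ (of length $t^2\in(1,4]$) onto $[0,1]$, obtains $|T_t'(u)|=2t^2|u-z_t|\le 8|u-z_t|$, applies \Cref{lem:quantitative_trajectory_recurrence} with $\alpha=1$ and $H=8$, and rescales back using $t^2\ge1$, exactly as you do with $[-\beta_t,\beta_t]$ and the factor $2\beta_t$. Keep that route and drop the first: under $x\mapsto\lambda x$ one has $H\mapsto\lambda^{-\alpha}H$ and $\Lambda(\alpha,H)\mapsto\Lambda(\alpha,H)/\lambda$, so the bound $e^{-1}\Lambda^{-m}$ is not scale-invariant for $m\ge2$. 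The cited estimate therefore implicitly assumes a unit-length phase interval, which is why the paper normalizes, and the sharper constant $4e^2/(e-1)$ from applying it directly on $[-\beta_t,\beta_t]$ is not justified by that lemma.
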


\begin{proof}
The invariant core of $P_t$ is $I_t=[t,t^2+t]$. Consider the orientation-preserving affine bijection $A_t\colon I_t \longrightarrow [0,1]$, defined by
\[  A_t(x)=\frac{x-t}{t^2},\]
and let $T_t = A_t\circ P_t\circ A_t^{-1}$. A direct calculation gives $T_t(u)=(1+tu)^2$. Its unique critical point is $z_t=A_t(0)=-\frac1t$, and
\[ T_t'(u) = 2t(1+tu) = 2t^2(u-z_t).\]
Since $t^2\leq4$, it follows that $|T_t'(u)| \leq 8|u-z_t|$ for every $u\in[0,1]$.

We may therefore apply \Cref{lem:quantitative_trajectory_recurrence} with $\alpha=1$ and $H=8$. The corresponding recurrence constant is
\[ \Lambda(1,8) = \exp\left( \log2+\log8+1-\log(1-e^{-1}) \right) = \frac{16e^2}{e-1} = \Lambda_{\rm rec}.\]
Consequently,
\[ |T_t^m(z_t)-z_t| \ge e^{-1}\Lambda_{\rm rec}^{-m}\]
for every $m\geq1$.

Finally, since $A_t\bigl(P_t^m(0)\bigr)=T_t^m(z_t)$, $A_t(0)=z_t$, and $(A_t^{-1})' = t^2$, we obtain
\[ |P_t^m(0)| = t^2|T_t^m(z_t)-z_t|.\]
Since $t^2\geq1$, the desired estimate follows.
\end{proof}

Let $\gamma\in(0,1/2)\setminus\Q$, with $\gamma=[a_1,a_2,a_3,\ldots]$, and convergents $\frac{p_n}{q_n}$, and let $P_{t_\gamma} \in \cS_{\gamma}^2$. 

\begin{corollary}
\label{cor:uniform_quadratic_nonexistence_acip}
If
\begin{equation}
    \label{eq:uniform_quadratic_nonexistence_series}
    \sum_{n\geq1} a_{2n+1}(q_{2n}-p_{2n}) \Lambda_{\rm rec}^{-q_{2n}} = +\infty,
\end{equation}
then $P_{t_\gamma}$ does not admit an absolutely continuous invariant probability measure.
\end{corollary}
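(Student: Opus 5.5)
The plan is to feed the uniform recurrence bound of \Cref{lem:universal_quadratic_lower_recurrence} into the dynamical criterion of \Cref{prop:dynamical_nonexistence_acip_rotation_like}. This is exactly how \Cref{thm:general_arithmetic_nonexistence_acip} was obtained from \Cref{lem:map_dependent_exponential_lower_recurrence}. The only new input is that the scale $\Lambda_{\rm rec}$ no longer depends on the map.

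First I will place $P_{t_\gamma}$ in the setting of \Cref{sec:non_existence_of_acip}. By \Cref{prop:Ble_quadratic_representative}, the maximum-type affine conjugate $\widetilde P$ of $P_{t_\gamma}$ lies in $\cS_\gamma^2$. An orientation-reversing affine conjugacy $x\mapsto a-bx$ maps acips to acips, since it preserves Lebesgue measure classes. So it suffices to show that $\widetilde P$ has no acip.

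Next I will check the hypotheses of \Cref{lem:universal_quadratic_lower_recurrence}. The parameter satisfies $t_\gamma\in[-2,-1]$, and I must exclude $t_\gamma=-1$. At $t=-1$ the critical point has period $2$, while the kneading sequence of $Q_\gamma$ is aperiodic. The critical point is nonperiodic by \Cref{cor:uniform_recurrence_long_branched_sturmian}. There is no attracting periodic orbit: otherwise, by Singer's theorem, the critical point would lie in its basin, which is incompatible with recurrence and nonperiodicity. The lemma then gives $|P_{t_\gamma}^m(0)|\ge e^{-1}\Lambda_{\rm rec}^{-m}$.

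Under the conjugacy, $|\widetilde P^{m}(\tilde c)-\tilde c| = b\,|P_{t_\gamma}^m(0)|$ with $b>0$ fixed. Hence
\[ \sum_{n\ge1} a_{2n+1}(q_{2n}-p_{2n})\,|\widetilde P^{q_{2n}}(\tilde c)-\tilde c| \ \ge\ b e^{-1}\sum_{n\ge1} a_{2n+1}(q_{2n}-p_{2n})\Lambda_{\rm rec}^{-q_{2n}} = +\infty \]
by \eqref{eq:uniform_quadratic_nonexistence_series}. \Cref{prop:dynamical_nonexistence_acip_rotation_like} applied to $\widetilde P$ then shows that $\widetilde P$, and hence $P_{t_\gamma}$, has no acip.

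The only delicate step is the verification of the standing hypotheses: the endpoint case $t_\gamma\neq -1$ and the absence of attracting cycles. This is needed so that both \Cref{lem:universal_quadratic_lower_recurrence} and the long-branched inducing criterion behind \Cref{prop:dynamical_nonexistence_acip_rotation_like} apply. Both reduce to the aperiodicity and recurrence of the critical orbit, which have already been established.
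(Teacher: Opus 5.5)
Your proposal is correct and is essentially the paper's proof: the paper inserts the uniform bound $|P_{t_\gamma}^{m}(0)|\geq e^{-1}\Lambda_{\rm rec}^{-m}$ from \Cref{lem:universal_quadratic_lower_recurrence} into the series of \Cref{prop:dynamical_nonexistence_acip_rotation_like} and concludes. Your extra bookkeeping (the affine factor $b$, excluding $t_\gamma=-1$, and ruling out attracting cycles) is left implicit in the paper; for the last point, the cleaner justification is that the kneading sequence is not eventually periodic, since deriving recurrence from \Cref{cor:uniform_recurrence_long_branched_sturmian} already presupposes the contraction principle, which itself assumes there are no attracting cycles.
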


\begin{proof}
By \Cref{lem:universal_quadratic_lower_recurrence}, we have
\[ \sum_{n\geq1} a_{2n+1}(q_{2n}-p_{2n}) |P_{t_\gamma}^{q_{2n}}(0)| \geq e^{-1} \sum_{n\geq1}
    a_{2n+1}(q_{2n}-p_{2n}) \Lambda_{\rm rec}^{-q_{2n}} = +\infty.\]
The conclusion follows from \Cref{prop:dynamical_nonexistence_acip_rotation_like}.
\end{proof}

Hypothesis \eqref{eq:Ble_nonexistence_condition_comparison} implies \Cref{eq:uniform_quadratic_nonexistence_series}. Indeed, take $\lambda=\Lambda_{\rm rec}^{-1}$. By \eqref{eq:best_approx_convergents} one has  $ q_{2n}-p_{2n} > \frac{q_{2n}}{2}$.
Therefore
\[  a_{2n+1}(q_{2n}-p_{2n}) \Lambda_{\rm rec}^{-q_{2n}} > 
    \frac12 a_{2n+1}q_{2n} \Lambda_{\rm rec}^{-q_{2n}}.\]
Under Bl\'e's hypothesis \eqref{eq:Ble_nonexistence_condition_comparison}, the terms on the right have infinite limsup, so the series in \Cref{eq:uniform_quadratic_nonexistence_series} diverges. The converse is false, as shown by the following example.

\begin{example}
\label{ex:non_existence_beyond_ble}
Set $a_1=2$ and $a_{2n}=1$ for every $n\geq1$. Define the odd coefficients recursively by
\begin{equation}
    \label{eq:noncircular_large_odd_coefficients}
    a_{2n+1} = \left\lceil \frac{\Lambda_{\rm rec}^{q_{2n}}}{q_{2n}} \right\rceil,
\end{equation}
for every $n \ge 1$. This recursion is well defined: when $a_{2n+1}$ is chosen, the denominator $q_{2n}$ depends only on the previously chosen coefficients $a_1,\ldots,a_{2n}$. Moreover, the constant $\Lambda_{\rm rec}$ is universal and does not depend on the angle being constructed.

Let $\gamma=[a_1,a_2,a_3,\ldots]$. Then $\gamma\in(0,1/2)\setminus\Q$. Since $p_{2n}/q_{2n}<\gamma$, we have
\[ \frac{q_{2n}-p_{2n}}{q_{2n}} = 1-\frac{p_{2n}}{q_{2n}} > 1-\gamma > \frac12.\]
Using \Cref{eq:noncircular_large_odd_coefficients}, it follows that
\[ a_{2n+1}(q_{2n}-p_{2n}) \Lambda_{\rm rec}^{-q_{2n}} \geq \frac{\Lambda_{\rm rec}^{q_{2n}}}{q_{2n}} (q_{2n}-p_{2n})
    \Lambda_{\rm rec}^{-q_{2n}} = \frac{q_{2n}-p_{2n}}{q_{2n}} > \frac12.\]
Consequently,
\[ \sum_{n\geq1} a_{2n+1}(q_{2n}-p_{2n}) \Lambda_{\rm{rec}}^{-q_{2n}} = +\infty.\]
By \Cref{cor:uniform_quadratic_nonexistence_acip}, the quadratic representative $P_{t_\gamma}$ does not admit an absolutely continuous invariant probability measure.

On the other hand, this angle does not satisfy Bl\'e's condition. Indeed, put $\lambda_0 = \frac{1}{2}\Lambda_{\rm{rec}}^{-1}  \in (0,1)$. Since $\lceil x\rceil\leq x+1$, we obtain
\[  a_{2n+1}q_{2n}\lambda_0^{q_{2n}} \leq \left( \frac{\Lambda_{\rm rec}^{q_{2n}}}{q_{2n}} +1 \right)
    q_{2n}\lambda_0^{q_{2n}} = \left( \Lambda_{\rm rec}\lambda_0 \right)^{q_{2n}} + q_{2n}\lambda_0^{q_{2n}}
    = 2^{-q_{2n}} + q_{2n}(2\Lambda_{\rm rec})^{-q_{2n}} \longrightarrow0. \]
Thus
\[ \limsup_{n\to\infty} a_{2n+1}q_{2n}\lambda_0^{q_{2n}} = 0,\]
whereas Bl\'e's hypothesis requires this limsup to be $+\infty$ for every $0 < \lambda < 1$. Hence the series criterion is strictly weaker than Bl\'e's condition.
\end{example}


\section{Failure of the Collet--Eckmann condition}
\label{sec:non_Collet_Eckmann}

In this section we prove that no finite-order $S$-unimodal realizationof an irrational long-branched Sturmian kneading class satisfies the Collet--Eckmann condition.

The proof has two parts. First, we show that the Collet--Eckmann condition forces the first-disagreement times \eqref{eq:first_symbolic_disagreement_time} to have bounded Ces\`aro averages. The reason is that agreement between the kneading sequence and its shift by $m$ produces a monotonicity interval joining $c$ to $f^m(c)$. Exponential shrinking of pullbacks then makes this interval exponentially small in $R(m)$. Nonflatness converts this proximity to the turning point into an upper bound on the derivative at $f^m(c)$.

Second, we prove that the Ces\`aro averages of $R(m)$ diverge for every long-branched Sturmian kneading sequence. This is a purely combinatorial consequence of the mechanical coding of an irrational rotation. Long prefixes of the Sturmian word occur after the symbol $1$ with frequencies bounded below by certain one-sided closest-return gaps of the rotation, and the sum of these gaps diverges.

\subsection{Collet--Eckmann condition and mean agreement times}
\label{subsec:CE_mean_agreement_times}

Let $f$ be an $S$-unimodal map with nonperiodic turning point $c$. Let $K(f)=e_1e_2e_3\cdots$ be the kneading sequence of the critical value. Recall that $R(m) = \min\{r\geq1:e_{m+r}\neq e_r\}$.

For $m\geq1$, let $V_m$ denote the closed interval with endpoints $c$ and $c_m$.

\begin{lemma}
\label{lem:agreement_interval_is_monotone}
For every $m\geq1$, the iterate $f^{R(m)}\big|_{V_m}$ is monotone.
\end{lemma}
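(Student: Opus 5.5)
We argue by induction on $j$, proving that for every $0\leq j\leq R(m)-1$ the iterate $f^{j}|_{V_m}$ is strictly monotone (for $j=0$ it is the identity) and that
\[ f^j(V_m) \text{ is the closed interval with endpoints } c_j \text{ and } c_{m+j}, \text{ with } c\notin f^j(V_m) \text{ when } j\geq1. \]
Once this holds up to $j=R(m)-1$, the interval $f^{R(m)-1}(V_m)$ lies in one of the two closed monotonicity intervals of $f$. Hence $f^{R(m)}=f\circ f^{R(m)-1}$ is monotone on $V_m$. When $R(m)=1$ the claim reduces to the fact that $V_m$ has $c$ as an endpoint, so $f|_{V_m}$ is monotone.

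The base case $j=0$ is immediate. Here $V_m=[c,c_m]$, with the endpoints written in either order, and it contains $c$ only as an endpoint. So $V_m$ lies in a closed monotonicity interval of $f$, and $f(V_m)$ is the interval with endpoints $c_1$ and $c_{m+1}$.

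For the inductive step, suppose that $1\leq j\leq R(m)-1$ and that $f^j|_{V_m}$ is strictly monotone, so that $f^j(V_m)$ is the closed interval with endpoints $c_j$ and $c_{m+j}$. Neither endpoint equals $c$, because the turning point is nonperiodic. Since $j<R(m)$, the definition \eqref{eq:first_symbolic_disagreement_time} gives $e_{m+j}=e_j$. Thus $c_j$ and $c_{m+j}$ lie strictly on the same side of $c$, and therefore $c\notin f^j(V_m)$. Consequently $f$ is strictly monotone on $f^j(V_m)$. It follows that $f^{j+1}|_{V_m}$ is strictly monotone, with image the interval with endpoints $c_{j+1}$ and $c_{m+j+1}$. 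This closes the induction.

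The argument is elementary and parallels the proof of \Cref{lem:critical_cylinder_connected}. The only points requiring care are two: the inductive identification of $f^j(V_m)$ with the interval spanned by its two endpoint orbits, which needs monotonicity at each earlier stage, and the use of nonperiodicity of $c$ to exclude endpoints landing exactly on $c$. I do not expect a genuine obstacle. One should note, however, that the induction stops at $j=R(m)-1$. At $j=R(m)$ the endpoints fall on opposite sides of $c$, so $f^{R(m)+1}$ need not be monotone on $V_m$.
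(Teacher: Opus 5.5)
Your proof is correct and is essentially the paper's argument. The paper phrases it as a minimal-counterexample contradiction: a first time $r<R(m)$ at which an interior point of $V_m$ hits $c$ would put $c_r$ and $c_{m+r}$ on opposite sides of $c$, contradicting $e_{m+r}=e_r$. You run the same reasoning as a forward induction on $j$, which also makes explicit that $f^j(V_m)$ is the interval spanned by $c_j$ and $c_{m+j}$.
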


\begin{proof}
Suppose otherwise. Then there exist an integer $1\leq r<R(m)$ and a point $y\in\operatorname{int}(V_m)$ such that $f^r(y)=c.$ Choose $r$ minimal with this property.

The interval $V_m$ is contained in one of the two closed monotonicity intervals of $f$, with $c$ as an endpoint. By the minimality of $r$, none of the intervals $f^j(V_m)$, with $1\leq j<r$, contains $c$. It follows that $f^r\big|_{V_m}$ is monotone. Since $y$ belongs to the interior of $V_m$ and $f^r(y)=c$, the point $c$ lies strictly between the two endpoint images $f^r(c)=c_r$  and $f^r(c_m)=c_{m+r}$. Consequently, $c_r$ and $c_{m+r}$ lie on different sides of $c$.

On the other hand, $r<R(m)$ implies $e_{m+r}=e_r$. Since the turning point is nonperiodic, neither $c_r$ nor $c_{m+r}$ is equal to $c$. Thus the equality of their itinerary symbols means that they lie on the same side of $c$, a contradiction. Therefore $f^{R(m)}|_{V_m}$ is monotone.
\end{proof}

We will use the following lemma \cite{Nowicki1988_positive_lyap_exp_for_crit_values}

\begin{lemma}[Exponential shrinking of monotonicity intervals]
\label{lem:CE_lap_shrinking}
Let $f$ be an $S$-unimodal map satisfying the Collet--Eckmann condition. Then there exist constants $C_0>0$ and $\xi\in(0,1)$ such that, for every $n\geq1$ and every interval $W\subset I$ on which $f^n$ is monotone, one has $|W|\leq C_0\xi^n.$
\end{lemma}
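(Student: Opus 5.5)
The plan is to derive the lemma from the equivalence, for $S$-unimodal maps, between the Collet--Eckmann condition and \emph{exponential shrinking of pullbacks} (the property often called ExpShrink). This equivalence is due to Nowicki and Przytycki, and to Nowicki and Sands, and goes back to Nowicki \cite{Nowicki1988_positive_lyap_exp_for_crit_values}. It states that if $f$ is Collet--Eckmann, there exist $\delta>0$, $C_1>0$ and $\xi\in(0,1)$ with the following property: for every interval $J\subset I$ with $|J|\leq\delta$, every $n\geq1$, and every connected component $W'$ of $f^{-n}(J)$, one has $|W'|\leq C_1\xi^n$. I would quote this implication rather than reprove it. If a self-contained argument were wanted, the route is the classical one: Collet--Eckmann gives backward expansion at the critical value, and the Koebe principle from negative Schwarzian derivative transports that expansion to pullbacks. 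The pullbacks are split into pieces that avoid $c$, where Koebe gives bounded distortion, and pieces that pass near $c$, where the Collet--Eckmann growth of $|Df^k(c_1)|$ compensates the contraction coming from the non-flat critical point of order $\ell$.

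Granting the pullback statement, the lemma reduces to a covering argument. Let $W$ be an interval on which $f^n$ is monotone. Its image $f^n(W)$ is an interval in $I$, so it can be subdivided into $N\leq |I|/\delta+1$ consecutive closed subintervals $J_1,\dots,J_N$, each of length at most $\delta$. Since $f^n|_W$ is monotone and continuous, each set $W\cap f^{-n}(J_i)$ is an interval. This interval is connected and lies in $f^{-n}(J_i)$, so it is contained in a single connected component of $f^{-n}(J_i)$, and therefore has length at most $C_1\xi^n$. Summing over $i$ gives
\[ |W|\leq \Bigl(\frac{|I|}{\delta}+1\Bigr)C_1\xi^n, \]
which is the claim with $C_0=(|I|/\delta+1)C_1$.

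Two details remain. First, the standing assumptions (no attracting or parabolic cycles, hence no wandering intervals) are exactly those needed for the pullback theorem, and a Collet--Eckmann map has a nonperiodic critical point, so these hypotheses hold here. Second, $N$ is bounded independently of $n$, so the constant $C_0$ is uniform in $n$.

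The main obstacle is the implication from Collet--Eckmann to exponential shrinking of pullbacks; all of the analytic difficulty sits there. I would treat it as a cited theorem and check only that its hypotheses match the present ones: $C^3$ regularity away from $c$, negative Schwarzian derivative, and a non-flat critical point of finite order $\ell>1$.
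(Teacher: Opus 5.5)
Your proof is correct. The paper itself gives no argument for this lemma and simply quotes it from Nowicki's 1988 paper, where exponential shrinking of intervals of monotonicity is obtained directly from the Collet--Eckmann condition.

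You instead invoke a stronger pullback property: exponential shrinking of \emph{every} component of $f^{-n}(J)$ with $|J|\le\delta$, including components on which $f^n$ folds. You then reduce the lemma to it by covering $f^n(W)$ with at most $|I|/\delta+1$ intervals of length at most $\delta$. That reduction is sound. Monotonicity of $f^n|_W$ makes each $W\cap f^{-n}(J_i)$ connected, so it lies in a single pullback component, and the constant $C_0=(|I|/\delta+1)C_1$ is uniform in $n$.

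The paper's citation is more economical, because Nowicki's result is exactly the statement needed and is the older, more direct input. Your route makes the only analytic ingredient the standard ExpShrink formulation of the Collet--Eckmann property. The cost is that you appeal to a deeper theorem than the lemma requires: controlling pullbacks that pass through the critical point is harder than controlling monotone laps.

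In both versions, the standing assumption that $f$ has no attracting or parabolic cycle, which you rightly check, is genuinely needed. The Collet--Eckmann condition alone does not exclude an attracting fixed point on $\partial I$, and near such a point the lemma would fail.
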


We also use the following logarithmic recurrence estimate. It is an immediate consequence of the trajectory-recurrence estimate of Nowicki and Przytycki \cite[Lemma~1.1, equation~(1.1)]{Nowicki-Przytycki1998_top_inv_of_CE_property}.

\begin{lemma}
\label{lem:NP_logarithmic_recurrence_bound}
Let $f$ be an $S$-unimodal map satisfying the Collet--Eckmann condition, with turning point $c$. After an affine normalization of the phase interval, there exists a constant $C_f>0$ such that
\[ \sum_{m=1}^{N} \log\frac{1}{|c_m-c|}\leq C_f N\]
for every $N\geq1$.
\end{lemma}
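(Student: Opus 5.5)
The plan is to derive the summed bound from the per-term recurrence estimate of \Cref{lem:quantitative_trajectory_recurrence}. The extra input is a bookkeeping argument that charges each close return to a block of subsequent times, which is the standard binding-period argument behind \cite[Lemma~1.1]{Nowicki-Przytycki1998_top_inv_of_CE_property}. After an affine normalization we may assume $|I|\leq1$, so every term $\log(1/|c_m-c|)$ is nonnegative. Put $M=\sup|f'|$. By nonflatness there is $H\geq1$ with $|f'(x)|\leq H|x-c|^{\ell-1}$, and from the definition of $\ell$ also $|f(x)-c_1|\asymp|x-c|^\ell$ near $c$. Fix a small threshold $\delta_0>0$. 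Times $m$ with $|c_m-c|\geq\delta_0$ contribute at most $\log(1/\delta_0)$ each, so only the deep returns $|c_m-c|<\delta_0$ need to be controlled.

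To a deep return at time $m$, with $\delta=|c_m-c|$, I would attach a binding period $p=p(m)$. Define $p$ as the largest integer such that $f^{j}(f(V_m))$ stays in a small neighbourhood of $c_{j+1}$ for $0\leq j<p$, small meaning comparable to $\tfrac12|c_{j+1}-c|$, so that $|c_{m+j+1}-c|\geq\tfrac12|c_{j+1}-c|$ on the binding period. Two estimates are needed.
\begin{enumerate}[(a)]
\item \emph{Upper bound on the binding length:} the interval $f(V_m)$ has length $\asymp\delta^\ell$. Its image after $j$ steps is at most $M^j\delta^\ell$. Since the binding must break once this image reaches the size of $|c_{j+1}-c|$, and $|c_{j+1}-c|\geq e^{-1}\Lambda^{-(j+1)}$ by \Cref{lem:map_dependent_exponential_lower_recurrence}, we get $p\geq\kappa\log(1/\delta)$ for some $\kappa=\kappa(f)>0$. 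Hence $\log(1/\delta)\leq\kappa^{-1}(p+1)$.
\item \emph{Comparison inside the binding period:} the same shadowing gives, for $1\leq j\leq p$,
\[ \log\frac{1}{|c_{m+j}-c|}\leq\log2+\log\frac1{|c_j-c|}.\]
\end{enumerate}

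Next I would run a strong induction on $N$. Split $[1,N]$ into free times and maximal binding blocks $\{m\}\cup(m,m+p(m)]$, where a deep return occurring inside a binding period is absorbed into it. The deep return itself costs $\log(1/\delta)\leq\kappa^{-1}(p+1)$, which is linear in the block length. By (b) and the induction hypothesis on the shorter horizon $p<N$, the remaining times of the block cost at most $p\log2+C_f p$. Summing over blocks and free times gives a total of $\bigl(\log(1/\delta_0)+\log2+\kappa^{-1}\bigr)N+\sum_{\text{blocks}}C_f p$. This is not yet closed, because the coefficient in front of $N$ would be $C_f$ plus a constant, which does not close the induction.

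Closing this recursion is the main obstacle. Here the Collet--Eckmann condition has to be used, since \Cref{lem:quantitative_trajectory_recurrence} alone does not give it. I would first try to show that during the binding period the derivative recovers. Concretely, $|Df^{p+1}(c_m)|\gtrsim\delta^{\ell-1}|Df^{p}(c_1)|\gtrsim\delta^{\ell-1}\lambda^{p}$, while the binding image reaches size $\asymp|c_{p+1}-c|$. Together these force $\log(1/\delta)\leq\theta p$ with $\theta$ small once $\delta_0$ is small, using CE and \Cref{lem:CE_lap_shrinking} applied to the monotone interval $V_m$, which is monotone by \Cref{lem:agreement_interval_is_monotone}. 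A bound of this form makes the deep returns a geometrically small fraction of each block, and then the recursion closes with $C_f=O(\log(1/\delta_0))$. If that route fails, I would instead quote \cite[Lemma~1.1, equation~(1.1)]{Nowicki-Przytycki1998_top_inv_of_CE_property} directly for this summed form and only verify that its hypotheses hold for $S$-unimodal CE maps after normalization.
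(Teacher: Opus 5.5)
\textbf{The binding-period induction has a genuine gap, and the repair you propose does not close it.} The gap is in the recursion, not in the estimates (a) and (b).

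First, even granting $\log(1/\delta)\le\theta p$, estimate (b) loses $\log 2$ at each of the $p$ bound steps. So a block of $p+1$ times is charged $C_f p+(\theta+\log 2)p$ against the budget $C_f(p+1)$. The excess is linear in $p$. Binding periods are unbounded as soon as there are arbitrarily deep returns, since $p\ge\kappa\log(1/\delta)$ by your (a). Hence no choice of $C_f$ or $\delta_0$ absorbs the excess.

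Second, $\theta$ cannot be made small. By your definition of the binding period, $c\notin f^j(f(V_m))$ for $0\le j<p$, so $f^p$ is monotone on $f(V_m)$. \Cref{lem:CE_lap_shrinking} then gives $\delta^\ell\asymp|f(V_m)|\le C_0\xi^p$, that is, $\log(1/\delta)\ge(p\log\xi^{-1}-\log C_0)/\ell$. Depth and binding length are therefore comparable. Any scheme that bounds a block by the induction hypothesis applied to an initial segment of the same length leaves only the constant slack $C_f$ for the deep return, which costs an amount proportional to $p$. Even with a lossless comparison in (b), the recursion cannot close.

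\textbf{The missing idea:} under Collet--Eckmann the statement follows in one line from the nonflatness bound you already wrote down. Enlarge $H$ so that $|f'(x)|\le H|x-c|^{\ell-1}$ holds on the whole normalized interval; this is possible because $f$ is $C^1$ and $|x-c|^{\ell-1}$ is bounded below away from $c$. Then the chain rule and the Collet--Eckmann bound give
\[ \log C+N\log\lambda\le\log|Df^N(c_1)|=\sum_{m=1}^{N}\log|f'(c_m)|\le N\log H-(\ell-1)\sum_{m=1}^{N}\log\frac{1}{|c_m-c|}. \]
Hence the claim holds with, say, $C_f=(\log H+|\log C|)/(\ell-1)+1$. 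No binding periods, induction, or recurrence lemma are needed.

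\textbf{Comparison with the paper:} the paper gives no argument beyond citing Nowicki--Przytycki. Your fallback of quoting that reference therefore coincides with the paper's route. The self-contained argument you sketch before it, however, is not a proof.
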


We now combine the two lemmas above to obtain a uniform bound on the Ces\'aro averages of the first-disagreement times.

\begin{proposition}
\label{prop:CE_implies_bounded_mean_agreement}
Let $f$ be an $S$-unimodal map whose turning point is nonflat of finite order $\ell>1$. If $f$ satisfies the Collet--Eckmann condition, then
\[ \sup_{N\geq1} \frac{1}{N} \sum_{m=1}^{N}R(m) <+\infty.\]
\end{proposition}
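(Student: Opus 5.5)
The plan is to show that $R(m)$ is bounded above by an affine function of $\log(1/|c_m-c|)$. Summing this bound over $m$ and applying \Cref{lem:NP_logarithmic_recurrence_bound} then yields the claimed bound on the Ces\`aro averages.

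Fix $m\geq1$ and let $V_m$ be the closed interval with endpoints $c$ and $c_m$. By \Cref{lem:agreement_interval_is_monotone}, $f^{R(m)}$ is monotone on $V_m$. Since the Collet--Eckmann condition holds, \Cref{lem:CE_lap_shrinking} applied to $W=V_m$ and $n=R(m)$ gives
\[ |c_m-c| = |V_m| \leq C_0\xi^{R(m)}. \]
Taking logarithms, this becomes
\[ R(m)\log\frac{1}{\xi} \leq \log C_0 + \log\frac{1}{|c_m-c|}. \]
The right-hand side is finite because the turning point is nonperiodic, so $c_m\neq c$. Rearranging,
\[ R(m) \leq \frac{\log C_0}{\log(1/\xi)} + \frac{1}{\log(1/\xi)}\log\frac{1}{|c_m-c|}. \]
Summing over $1\leq m\leq N$ and using \Cref{lem:NP_logarithmic_recurrence_bound} gives
\[ \sum_{m=1}^{N}R(m) \leq \frac{N\max\{\log C_0,0\} + C_f N}{\log(1/\xi)}. \]
The required supremum bound follows immediately.

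The one technical point concerns the affine normalization in \Cref{lem:NP_logarithmic_recurrence_bound}. Rescaling the phase interval multiplies every $|c_m-c|$ by a fixed constant. This changes each logarithm by a bounded additive term, which is absorbed into $C_0$ or $C_f$. The statement mentions nonflatness because it enters both cited lemmas. The step I expect to require the most care is therefore not a computation but checking that the hypotheses of \Cref{lem:CE_lap_shrinking} and \Cref{lem:NP_logarithmic_recurrence_bound} apply to $f$ as given, namely $S$-unimodal, finite critical order, and Collet--Eckmann. I will not re-prove those lemmas. Should a more self-contained argument be wanted, one could instead bound $|Df(c_m)|\lesssim |c_m-c|^{\ell-1}$ and split the Collet--Eckmann derivative product along the times $m$. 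That route is longer, so I will rely on the recurrence lemma as stated.
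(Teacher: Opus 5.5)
Your proposal is correct and follows the paper's proof essentially step for step. In both, monotonicity of $f^{R(m)}$ on $V_m$ together with exponential lap shrinking gives $|c_m-c|\le C_0\xi^{R(m)}$, hence $R(m)\le \bigl(\log C_0+\log|c_m-c|^{-1}\bigr)/\log(\xi^{-1})$, and summing this with \Cref{lem:NP_logarithmic_recurrence_bound} bounds the Ces\`aro averages. The only cosmetic difference is that the paper normalizes the phase interval to length one at the outset and takes $C_0\geq1$, whereas you absorb the rescaling into the constants afterwards; both are valid.
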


\begin{proof}
After an affine change of coordinates, we may assume that the phase interval has length one. This does not change the function $R$ or the Collet--Eckmann property.

Let $C_0\geq1$ and $\xi\in(0,1)$ be the constants from \Cref{lem:CE_lap_shrinking}. By
\Cref{lem:agreement_interval_is_monotone}, $f^{R(m)}$ is monotone on the interval $V_m=[c,c_m]$. Hence $|c_m-c| = |V_m| \leq C_0\xi^{R(m)}$. Therefore
\[ R(m) \le \frac{\log C_0+\log |c_m-c|^{-1}}{\log(\xi^{-1})}.\]
Summing over $1\leq m\leq N$ and using \Cref{lem:NP_logarithmic_recurrence_bound}, we obtain
\[ \sum_{m=1}^{N}R(m) \le \frac{N\log C_0+C_fN}{\log(\xi^{-1})}.\]
Consequently,
\[ \frac1N\sum_{m=1}^{N}R(m) \le \frac{\log C_0+C_f}{\log(\xi^{-1})}\]
for every $N\geq1$, which proves the proposition.
\end{proof}


\subsection{Mean agreement times for Sturmian kneading sequences}
\label{subsec:Sturmian_mean_agreement_times}

Fix an irrational number $\gamma\in(0,1/2)$ and put $\omega_\gamma = \gamma/ (1-\gamma) \in (0,1)$. For $n \ge 1$, let $w_n=Q_{\gamma}(1)Q_{\gamma}(2) \cdots Q_{\gamma}(n)$ be the prefix of length $n$ of the lower mechanical word $Q_{\gamma}(1) Q_{\gamma}(2) Q_{\gamma}(3)\cdots$.

We first record an elementary property of the one-sided closest returns of the irrational rotation of angle $\omega$. Put $\{x\} := x-\lfloor x\rfloor$ for the fractional part of $x$, and define
\[ u_n(\omega) := \min_{1\leq r\leq n} \bigl(1-\{r\omega\}\bigr).\]
\begin{lemma}
\label{lem:one_sided_rotation_gaps_not_summable}
Let $\omega \in (0,1) \setminus \Q$. We have
\[ \sum_{n\geq1}u_n (\omega) = +\infty.\]
\end{lemma}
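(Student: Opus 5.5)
We need to show that $\sum_{n} u_n(\omega)=+\infty$, where $u_n(\omega)=\min_{1\le r\le n}(1-\{r\omega\})$. The sequence $u_n$ is nonincreasing and piecewise constant. It changes value only at the one-sided record times: those $r$ at which $1-\{r\omega\}$ attains a new minimum, i.e. $\{r\omega\}$ comes closer to $1$ from below than ever before. The plan is to identify these record times via continued fractions and then bound the sum below blockwise.

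\textbf{Step 1: identify the records.} Write $\omega=[b_1,b_2,\ldots]$ with convergents $P_k/Q_k$. The distance $1-\{r\omega\}$ is the distance of $r\omega$ to the next integer from below. Hence record values correspond to approximations $P/r>\omega$ from above, that is, to the odd convergents and their intermediate fractions. By the same Farey (intermediate convergent) argument used in the proof of \Cref{thm:explicit_co_cutting_times_rotation_like}, with the roles of even and odd indices exchanged, the record times are
\[ r = Q_{2k-1}+jQ_{2k}, \qquad 0\le j\le b_{2k+1}. \]
At such a time the record value equals $\delta_{2k-1}-j\delta_{2k}$, where $\delta_i=|Q_i\omega-P_i|$. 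Between consecutive records $u_n$ is constant.

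\textbf{Step 2: lower bound on a block.} It suffices to sum only over $n$ in the last stretch of each block, namely $Q_{2k+1}\le n<Q_{2k+3}$, a range of length $Q_{2k+3}-Q_{2k+1}$. Since $u_n$ is nonincreasing, on this range
\[ u_n\ \ge\ \delta_{2k+1}\ \ge\ \frac{1}{2Q_{2k+2}}, \]
using the standard bound $\delta_i\ge 1/(Q_i+Q_{i+1})\ge 1/(2Q_{i+1})$. Moreover
\[ Q_{2k+3}-Q_{2k+1}=b_{2k+3}Q_{2k+2}\ \ge\ Q_{2k+2}. \]
Therefore the block contributes at least
\[ Q_{2k+2}\cdot \frac{1}{2Q_{2k+2}}=\frac12. \]

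\textbf{Step 3: conclude.} Summing the contribution $\tfrac12$ over infinitely many disjoint blocks $k\ge 0$ gives divergence.

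This argument does not even require the precise list of record times in Step 1. It only uses two facts: $u_n\ge \delta_{2k+1}$ for $n<Q_{2k+3}$, and $\delta_{2k+1}\ge 1/(2Q_{2k+2})$.

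\textbf{Main obstacle.} The delicate point is the first of these facts. We need that no $r<Q_{2k+3}$ satisfies $1-\{r\omega\}<\delta_{2k+1}$. This is the one-sided best-approximation property: the first $r$ beating $\delta_{2k+1}$ from above is $Q_{2k+1}+Q_{2k+2}$, and even that value is at least $\delta_{2k+3}$. To avoid relying on the exact record list, I would instead use the weaker but standard statement that for $r<Q_{i+1}$ one has $\|r\omega\|\ge\delta_i$. Applying it with $i=2k+2$ gives, for $n<Q_{2k+3}$,
\[ u_n\ \ge\ \delta_{2k+2}\ \ge\ \frac{1}{2Q_{2k+3}}. \]
Then sum over $Q_{2k+2}\le n<Q_{2k+3}$. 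This works whenever $Q_{2k+3}-Q_{2k+2}\ge Q_{2k+3}/4$, which fails only when $b_{2k+3}=1$ and $Q_{2k+1}$ is small relative to $Q_{2k+2}$. In that case I would fall back on the one-sided version of the estimate, as in Step 2.
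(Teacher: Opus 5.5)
Step~2 contains a false claim. You assert $u_n\ge\delta_{2k+1}$ for all $Q_{2k+1}\le n<Q_{2k+3}$, and Step~3 lists this as one of the two facts the proof rests on. It fails whenever $b_{2k+3}\ge 2$. In that case $r=Q_{2k+1}+Q_{2k+2}<Q_{2k+3}$, and
\[ 1-\{r\omega\}=\delta_{2k+1}-\delta_{2k+2}<\delta_{2k+1}, \]
so $u_n<\delta_{2k+1}$ on $[Q_{2k+1}+Q_{2k+2},Q_{2k+3})$. In fact $u_{Q_{2k+3}-1}=\delta_{2k+2}+\delta_{2k+3}$, which is far smaller than $\delta_{2k+1}=b_{2k+3}\delta_{2k+2}+\delta_{2k+3}$ when $b_{2k+3}$ is large. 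Your ``Main obstacle'' paragraph names exactly this $r$ as the first one beating $\delta_{2k+1}$, but does not notice that it lies inside your summation range. The remark that its value is still at least $\delta_{2k+3}$ gives no lower bound of size $\delta_{2k+1}$.

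The repair is immediate, because you only ever used a range of length $Q_{2k+2}$. Sum over $Q_{2k+1}\le n<Q_{2k+1}+Q_{2k+2}$ instead. There the one-sided best-approximation property from Step~1 gives $u_n=\delta_{2k+1}\ge 1/(Q_{2k+1}+Q_{2k+2})\ge 1/(2Q_{2k+2})$. These ranges are pairwise disjoint because $Q_{2k+1}+Q_{2k+2}\le Q_{2k+3}$, and each contributes at least $\tfrac12$. Your fallback case split is also sound once read correctly. For $b_{2k+3}\ge 2$, the two-sided Lagrange bound on $[Q_{2k+2},Q_{2k+3})$ yields at least $\tfrac14$. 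For $b_{2k+3}=1$, your Step~2 range coincides with the repaired one; this case still uses the one-sided property, so the fallback does not avoid it.

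With this correction your argument is valid, but it follows a different route from the paper. The paper uses no continued fractions. It takes the successive record times $r_k$ of $u_n$ directly and notes that $\lceil r_k\omega\rceil r_{k+1}-\lceil r_{k+1}\omega\rceil r_k$ is a positive integer. Hence $r_{k+1}\mathbf{u}_k\ge 1$ and $(r_{k+1}-1)u_{r_{k+1}-1}\ge 1-\mathbf{u}_k\to 1$. This contradicts summability, since a convergent series with nonincreasing positive terms forces $nu_n\to 0$.

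Your bound $\delta_{2k+1}\ge 1/(Q_{2k+1}+Q_{2k+2})$ is an instance of this integer inequality, applied to the consecutive records $Q_{2k+1}$ and $Q_{2k+1}+Q_{2k+2}$. Your version buys an explicit rate, $\sum_{n<Q_{2K+1}}u_n\ge K/2$. The price is importing the classical one-sided best-approximation theory, which the paper's self-contained argument does not need.
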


\begin{proof}
Fix $\omega \in (0,1) \setminus \Q$, and let $u_n = u_n(\omega)$. Since the orbit of an irrational rotation is dense, the sequence $(u_n)_{n \ge 1}$ is positive, nonincreasing, and converges to zero.

Let $1\leq r_1<r_2<r_3<\cdots$ be the successive record times at which $u_n$ decreases strictly, and put $\bfu_k := 1-\{r_k\omega\}$, and $\bfw_k := \lceil r_k\omega\rceil$. Thus $\bfu_k = \bfw_k - r_k\omega$, and $u_n = \bfu_k$ whenever  $r_k\leq n<r_{k+1}$.

Since $\bfu_{k+1} < \bfu_k$ and $r_k < r_{k+1}$, we have
\[ \frac{\bfw_k}{r_k} = \omega+\frac{\bfu_k}{r_k} > \omega+\frac{\bfu_{k+1}}{r_{k+1}} = \frac{\bfw_{k+1}}{r_{k+1}}.\]
It follows that $\bfw_k r_{k+1} - \bfw_{k+1} r_k$ is a positive integer. Hence
\[  1 \le \bfw_k r_{k+1} - \bfw_{k+1} r_k = r_{k+1} \bfu_k - r_k \bfu_{k+1}.\]
In particular, $r_{k+1} \bfu_k\geq1$. Since $u_{r_{k+1}-1} = \bfu_k$, we obtain
\[ (r_{k+1}-1) u_{r_{k+1}-1} = (r_{k+1}-1) \bfu_k \ge 1-\bfu_k.\]
As $k\to\infty$, the right-hand side tends to $1$.

On the other hand, if the positive series $\sum_nu_n$ converged, then $nu_n\to0$. Indeed,
\[ \frac n2\,u_n \le \sum_{j=\lceil n/2\rceil}^{n}u_j.\]
The right hand side above goes to $0$ as $n \to \infty$. This contradicts the preceding lower bound along the subsequence $n=r_{k+1}-1$. Therefore
\[ \sum_{n\geq1}u_n=+\infty.\]
\end{proof}

In the following lemma we prove that the numbers $u_n(\omega)$ give lower bounds for the frequencies of certain factors in the mechanical word.

\begin{lemma}
\label{lem:frequency_of_one_followed_by_Sturmian_prefix}
For all sufficiently large $n$,
\[ \liminf_{k\to\infty} \frac{1}{k} \#\left\{ 1\leq j\leq k: Q_{\gamma}(j) Q_{\gamma}(j+1) \cdots Q_{\gamma}(j+n) = 1 Q_{\gamma}(1) Q_{\gamma}(2) \cdots Q_{\gamma}(n) \right\} \ge u_n(\omega_{\gamma}).\]
\end{lemma}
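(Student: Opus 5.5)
The plan is to read both conditions in the event directly off the orbit of the rotation by $\omega=\omega_\gamma$. Then the set of good indices $j$ becomes the set of $j$ for which $\{j\omega\}$ lies in an explicit interval of length $u_n(\omega)$, and Weyl equidistribution finishes the proof.

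\emph{Step 1: rewrite the shifted symbols.} Write $x_j:=\{j\omega\}\in(0,1)$; it is nonzero because $\omega$ is irrational. Since $\lfloor (j+r)\omega\rfloor=\lfloor j\omega\rfloor+\lfloor x_j+r\omega\rfloor$, we get for every $r\geq 0$
\[ Q_\gamma(j+r)=\lfloor x_j+r\omega\rfloor-\lfloor x_j+(r-1)\omega\rfloor .\]

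\emph{Step 2: the first symbol.} Taking $r=0$, the symbol $Q_\gamma(j)$ equals $1$ exactly when $x_j-\omega<0$, that is, when $x_j\in(0,\omega)$.

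\emph{Step 3: the prefix.} Suppose $x_j<u_n(\omega)$. By definition of $u_n(\omega)$, for every $1\leq r\leq n$ we have $x_j+\{r\omega\}<1$, and the same holds trivially for $r=0$. Hence $\lfloor x_j+r\omega\rfloor=\lfloor r\omega\rfloor$ for all $0\leq r\leq n$. Taking consecutive differences gives $Q_\gamma(j+r)=Q_\gamma(r)$ for $1\leq r\leq n$.

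\emph{Step 4: combine.} Whenever $u_n(\omega)\leq\omega$, Steps 2 and 3 together show that $x_j\in(0,u_n(\omega))$ implies
\[ Q_\gamma(j)Q_\gamma(j+1)\cdots Q_\gamma(j+n)=1\,Q_\gamma(1)\cdots Q_\gamma(n).\]
This is the role of the phrase ``for all sufficiently large $n$'' in the statement. Since the orbit of an irrational rotation is dense, $u_n(\omega)$ decreases to $0$, as already noted in the proof of \Cref{lem:one_sided_rotation_gaps_not_summable}. So $u_n(\omega)\leq\omega$ holds for all large $n$. The only point needing care is this comparison: for small $n$ one has $u_1(\omega)=1-\omega$, which may exceed $\omega$, and the inclusion can then fail.

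\emph{Step 5: count.} The number of good $j\leq k$ is at least $\#\{1\leq j\leq k: \{j\omega\}\in[0,u_n(\omega))\}$. By Weyl's equidistribution theorem for the irrational rotation, this count divided by $k$ converges to $u_n(\omega)$. Taking $\liminf$ gives the claimed bound.

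I expect no real obstacle here: the argument is a direct computation plus equidistribution, and the only subtlety is the largeness threshold on $n$ from Step 4.
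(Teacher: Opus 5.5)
Your proof is correct and follows essentially the same argument as the paper. Both proofs rewrite the shifted symbols as an intercept-$x$ mechanical coding, use $x<u_n(\omega_\gamma)$ to force agreement with $Q_\gamma(1)\cdots Q_\gamma(n)$, take $n$ large so that $u_n(\omega_\gamma)<\omega_\gamma$ yields the leading symbol $1$, and conclude by equidistribution. The only difference is cosmetic: you track $\{j\omega_\gamma\}\in(0,u_n)$ directly, whereas the paper tracks $\{(j-1)\omega_\gamma\}\in[1-\omega_\gamma,1-\omega_\gamma+u_n)$ through the auxiliary intercept $y=1-\omega_\gamma+x$, and both conditions select exactly the same indices $j$.
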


\begin{proof}
Let $\gamma \in (0,1/2) \setminus \Q$, nd write $u_n = u_n(\omega_{\gamma})$. For $x\in[0,1)$ and $i\geq1$, define the intercept-$x$ mechanical coding
\[ \cQ_i(x) := \lfloor x+i \omega_{\gamma} \rfloor - \lfloor x+(i-1) \omega_{\gamma} \rfloor.
\]
Thus $\cQ_i(0)= Q_{\gamma}(i)$ for every $i \ge 1$.

Fix $n\geq1$. If $0\leq x<u_n$, then $x < u_n \le 1-\{r\omega_{\gamma} \}$ for every $1\leq r\leq n$. Consequently, $\lfloor x+r \omega_{\gamma} \rfloor = \lfloor r \omega_{\gamma} \rfloor$. It follows that
\begin{equation}
    \label{eq:mechanical_prefix_constant_near_zero}
    \cQ_1(x) \cQ_2(x)\cdots \cQ_n(x) = Q_{\gamma}(1) Q_{\gamma}(2) \cdots Q_{\gamma}(n) = w_n
\end{equation}
for every $x\in[0,u_n)$.

Since $u_n\to0$, as $n \to \infty$, we can choose $n$ large enough so that $u_n < \omega_{\gamma}$. For $0\leq x<u_n$, put $y = 1- \omega_{\gamma} +x$. Then $y\in[0,1)$ and $\cQ_1(y)=1$. Moreover, for every $1\leq i\leq n$,
\[ \cQ_{i+1}(y) = \lfloor y+(i+1) \omega_{\gamma} \rfloor - \lfloor y+i \omega_{\gamma} \rfloor
    = \lfloor 1+x+i \omega_{\gamma} \rfloor - \lfloor 1+x+(i-1) \omega_{\gamma} \rfloor = \cQ_i(x).\]
Together with \cref{eq:mechanical_prefix_constant_near_zero}, this shows that
\[ \cQ_1(y) \cQ_2(y) \cdots \cQ_{n+1}(y) =1 Q_{\gamma}(1) Q_{\gamma}(2) \cdots Q_{\gamma}(n) \]
for every $y \in [1-\omega_{\gamma},1-\omega_{\gamma} + u_n)$.

For $j\geq1$, set $x_j=\{(j-1)\omega_{\gamma}\}$. A direct calculation gives $Q_{\gamma}(j+i) =        \cQ_{i+1}(x_j)$, for every $0\leq i\leq n$. Therefore if $x_j\in [ 1- \omega_{\gamma}, 1 -\omega_{\gamma} +u_n)$ then
\[ Q_{\gamma}(j) Q_{\gamma}(j+1) \cdots Q_{\gamma}(j+n) = 1 Q_{\gamma}(1) Q_{\gamma}(2) \cdots Q_{\gamma}(n).\]

By equidistribution of the orbit of an irrational rotation, equivalently by unique ergodicity of the irrational rotation,
\[ \lim_{k\to\infty} \frac{1}{k} \#\{1\leq j\leq k: x_j \in [1 -\omega_{\gamma}, 1 -\omega_{\gamma} +u_n)\} = |[1 -\omega_{\gamma}, 1 -\omega_{\gamma} +u_n)| = u_n. \]
Since every such visit produces the required factor, the conclusion follows.
\end{proof}

We now transfer these occurrences in the mechanical word to long self-agreements of the unimodal kneading sequence.

\begin{proposition}
\label{prop:Sturmian_mean_agreement_diverges}
Let $f\in\cS_\gamma$, and let $R(m)$ be the first-disagreement function of its kneading sequence. Then
\[ \lim_{M\to\infty} \frac{1}{M} \sum_{m=1}^{M}R(m) = +\infty.\]
\end{proposition}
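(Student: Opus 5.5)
The plan is to turn each occurrence of the factor $1\,w_n$ in the mechanical word, counted by \Cref{lem:frequency_of_one_followed_by_Sturmian_prefix}, into a time $m$ at which $R(m)$ exceeds $S_n$. The divergence then follows from \Cref{lem:one_sided_rotation_gaps_not_summable}.

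\emph{Step 1: block structure of $K(f)$.} With the maximum convention, the kneading sequence is
\[ e_1e_2e_3\cdots = 1\,\Phi_{\max}(Q_\gamma(1))\,\Phi_{\max}(Q_\gamma(2))\cdots. \]
By \Cref{prop:cutting_times_kneading_sequence_sturmian}, the block $\Phi_{\max}(Q_\gamma(j))$ occupies positions $S_{j-1}+1,\ldots,S_j$, because $S_j-S_{j-1}=1+Q_\gamma(j)$.

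\emph{Step 2: a matching block gives a long agreement.} Suppose $Q_\gamma(j)Q_\gamma(j+1)\cdots Q_\gamma(j+n)=1\,w_n$. Put $m_j\=S_{j-1}+1$, so that $e_{m_j}=1$ is the first symbol of the block $\Phi_{\max}(1)=11$. Then
\[ e_{m_j+1}e_{m_j+2}\cdots e_{m_j+S_n} = 1\,\Phi_{\max}(Q_\gamma(1))\cdots\Phi_{\max}(Q_\gamma(n)) = e_1\cdots e_{S_n}, \]
since the second $1$ of the block plays the role of the initial symbol $e_1=1$, and the following blocks copy $\Phi_{\max}(w_n)$. Hence $R(m_j)>S_n\geq n$.

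The delicate point of the whole argument is this index bookkeeping. I would check that the offset is exactly right by verifying $m_j+1+(S_i-1)=S_{j+i-1}+1$ for $0\le i\le n$, using $S_{j+i}-S_j=\sum_{t=1}^{i}(1+Q_\gamma(j+t))=S_i-1$ on the matching window.

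\emph{Step 3: averaging.} The map $j\mapsto m_j$ is injective, and $m_j\le 2j$ since $S_{j-1}\le 2(j-1)+1$. So every $j\le \lfloor M/2\rfloor$ with the matching property gives a distinct $m\le M$ with $R(m)>S_n$. Fix $n_0$ so that \Cref{lem:frequency_of_one_followed_by_Sturmian_prefix} holds for all $n\ge n_0$, and fix any $N\ge n_0$.

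Since $S_n\ge n$, the condition $R(m)>S_N$ implies $R(m)>S_n$ for every $n\le N$. This gives the pointwise bound
\[ R(m)\ \ge\ \sum_{n=n_0}^{N}\mathbf 1\{R(m)>S_n\}. \]
Summing over $m\le M$, dividing by $M$, and applying \Cref{lem:frequency_of_one_followed_by_Sturmian_prefix} for each of the finitely many $n\in[n_0,N]$, we obtain
\[ \liminf_{M\to\infty}\frac1M\sum_{m=1}^{M}R(m)\ \ge\ \frac12\sum_{n=n_0}^{N}u_n(\omega_\gamma). \]
Letting $N\to\infty$, \Cref{lem:one_sided_rotation_gaps_not_summable} shows the right-hand side is unbounded. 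Hence the Ces\`aro averages tend to $+\infty$, which is the statement of the proposition.
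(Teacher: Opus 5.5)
Your proposal is correct and follows the paper's argument. Your $m_j=S_{j-1}+1$ coincides with the paper's $m_j=S_j-1$ whenever $Q_\gamma(j)=1$, and your layer-cake bound $R(m)\ge\sum_{n=n_0}^{N}\mathbf 1\{R(m)>S_n\}$ merely reorganizes the paper's bound $\sum_{n=n_0}^{L}\chi_{j,n}\le R(m_j)$, giving the same lower bound $\tfrac12\sum_{n}u_n(\omega_\gamma)$. One small slip: the alignment identity you propose to verify should read $m_j+S_i=S_{j+i}$, which is what $S_{j+i}-S_j=S_i-1$ gives, not $m_j+S_i=S_{j+i-1}+1$, which already fails at $i=0$; your displayed agreement in Step~2 is nonetheless correct.
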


\begin{proof}
Since we are using the convention that the turning point is a maximum, by \Cref{prop:cutting_times_kneading_sequence_sturmian}, the kneading sequence has the form $K(f) = 1\, \Phi(Q_{\gamma}(1))\Phi(Q_{\gamma}(2))\Phi(Q_{\gamma}(3))\cdots$, where $\Phi(0)=0$, and $\Phi(1)=11$. Moreover,
\[ S_j = 1+\sum_{i=1}^{j}|\Phi(Q_{\gamma}(i))| = 1+j+\lfloor j\, \omega_{\gamma} \rfloor.\]

For $j\geq1$, put $m_j=S_j-1$. Since $0<\omega<1$,
\begin{equation}
    \label{eq:mj_less_than_twice_j}
    m_j = j+\lfloor j\, \omega_{\gamma} \rfloor< 2j.
\end{equation}
In particular, the integers $m_j$ are distinct and $m_j\leq2N$ whenever $j\leq N$.

Suppose that, for some $j,n\geq1$, we have
\begin{equation}
    \label{eq:occurrence_one_prefix_assumption}
    Q_{\gamma}(j) Q_{\gamma}(j+1) \cdots Q_{\gamma}(j+n) = 1 Q_{\gamma}(1) Q_{\gamma}(2)\cdots Q_{\gamma}(n).
\end{equation}
In particular, $Q_{\gamma}(j)=1$, so $\Phi(Q_{\gamma}(j))=11$. The symbol at position $S_j$ of $K(f)$ is therefore the final symbol $1$ of the block $\Phi(Q_{\gamma}(j))$. By \cref{eq:occurrence_one_prefix_assumption}, the sequence beginning at this symbol is
\[ 1\, \Phi(Q_{\gamma}(j+1)) \Phi(Q_{\gamma}(j+2)) \cdots\Phi(Q_{\gamma}(j+n))
    = 1\, \Phi(Q_{\gamma}(1))\Phi(Q_{\gamma}(2))\cdots\Phi(Q_{\gamma}(n)).\]
The word on the right is the prefix of $K(f)$ of length $S_n$. Consequently,
\begin{equation}
    \label{eq:factor_occurrence_gives_long_R}
    R(m_j)>S_n.
\end{equation}

Choose $n_0$ so that \Cref{lem:frequency_of_one_followed_by_Sturmian_prefix} applies for every $n\geq n_0$. For $j,n\geq1$, write
\[ \chi_{j,n} =
    \begin{cases}
        1, \text{ if } Q_{\gamma}(j) Q_{\gamma}(j+1) \cdots Q_{\gamma}(j+n)
        =1 Q_{\gamma}(1) Q_{\gamma}(2) \cdots Q_{\gamma}(n),\\
        0,\text{ otherwise }.
    \end{cases}
\]

Fix $L\geq n_0$. For each fixed $j$, if $\chi_{j,n}=1$, then $\chi_{j,r}=1$ for every $r\leq n$. Suppose that $n_j = \max\{n_0\leq n\leq L:\chi_{j,n}=1\}$ exists, then \cref{eq:factor_occurrence_gives_long_R} gives $R(m_j)>S_{n_j}\geq n_j+1$. It follows that
\[ \sum_{n=n_0}^{L}\chi_{j,n} \le R(m_j).\]
The same inequality is trivial if the defining set for $n_j$ is empty.

Using \cref{eq:mj_less_than_twice_j} and the fact that the $m_j$ are distinct, we obtain
\[ \sum_{m=1}^{2N}R(m) \ge \sum_{j=1}^{N}R(m_j) \ge \sum_{j=1}^{N} \sum_{n=n_0}^{L}\chi_{j,n}.\]
Divide by $2N$ and let $N\to\infty$. Since the sum over $n$ is finite, \Cref{lem:frequency_of_one_followed_by_Sturmian_prefix} gives
\[  \liminf_{N\to\infty} \frac{1}{2N}\sum_{m=1}^{2N}R(m) \ge \frac{1}{2} \sum_{n=n_0}^{L}
    \liminf_{N\to\infty}\frac{1}{N}\sum_{j=1}^{N}\chi_{j,n} \ge \frac{1}{2} \sum_{n=n_0}^{L}u_n(\omega_{\gamma}).
\]
By \Cref{lem:one_sided_rotation_gaps_not_summable}, the expression on the right tends to $+\infty$ as $L\to\infty$. Hence
\[ \liminf_{N\to\infty} \frac{1}{2N} \sum_{m=1}^{2N}R(m) = +\infty.\]

Finally, for an arbitrary $M \in \N$, put $N=\lfloor M/2\rfloor$. Since $R(m)\geq1$,
\[ \frac{1}{M}\sum_{m=1}^{M}R(m) \ge \frac{2N}{2N+1} \left( \frac{1}{2N}\sum_{m=1}^{2N}R(m) \right).\]
This proves the proposition.

In the minimum-type convention first-disagreement time is the same, so the same conclusion holds in that convention.
\end{proof}

We can now prove \Cref{thm:no_CE_long_branched_sturmian}.

\begin{proof}[Proof of \Cref{thm:no_CE_long_branched_sturmian}]
Suppose that $f\in\cS_\gamma^\ell$ satisfies the Collet--Eckmann condition. By
\Cref{prop:CE_implies_bounded_mean_agreement},
\[ \sup_{N\geq1} \frac{1}{N} \sum_{m=1}^{N}R(m) < +\infty.\]
On the other hand, the kneading sequence of $f$ is the long-branched Sturmian kneading sequence of angle $\gamma$. Hence \Cref{prop:Sturmian_mean_agreement_diverges} gives
\[ \frac{1}{N} \sum_{m=1}^{N}R(m) \longrightarrow +\infty.\]
This is a contradiction. Therefore $f$ does not satisfy the Collet--Eckmann condition.
\end{proof}


\subsection{Persistent recurrence and Bl\'e's argument}
\label{subsec:Ble_persistent_recurrence_gap}

We briefly point out a gap in the persistent-recurrence argument used in \cite[Lemma~4.1]{Ble_2204_external_arguments_and_invariant_measures}. For a quadratic polynomial, Yoccoz's $\tau$-function may be described from the critical tableau by
\[ \tau(j) = \max\{m<j:c_{j-m}\in Y^{(m)}(c)\},\]
where $Y^{(m)}(c)$ is the critical puzzle piece of depth $m$.

In the proof of Lemma~4.1, Bl\'e establishes that $c_{q_{2n}+i q_{2n+1}}\notin Y^{(m)}(c)$ for $m\ge q_{2n+1}+1$ and $1\le i\le a_{2n+2}-1$, and then concludes that $q_{2n-1}\le\tau(j)$, for every $q_{2n+1} + 1 \le j \le q_{2n+3}$. The latter conclusion does not follow from the former. Indeed, a lower bound $\tau(j)\ge q_{2n-1}$ requires the existence, for each such $j$, of a critical tableau position $(m,j-m)$ with $m\ge q_{2n-1}$, whereas the preceding argument only establishes that certain specified tableau positions are noncritical. No such critical position is produced.

The missing step is particularly apparent when $a_{2n+2}=1$, for every $n \ge 1$. In that case the range $1\le i\le a_{2n+2}-1$ is empty, so the preceding exclusion is vacuous, while the same nontrivial lower bound for $\tau(j)$ is still asserted. Moreover, the construction preceding the conclusion involves only the semiconvergent block $q_{2n}+kq_{2n+1}$, which terminates at $q_{2n+2}$, whereas the asserted estimate extends to $q_{2n+3}$ without treating the intervening continued-fraction block.

Consequently, the argument in Lemma~4.1 does not establish persistent recurrence, and hence the proof of the non-Collet--Eckmann assertion in \cite[Theorem~1.3(ii)]{Ble_2204_external_arguments_and_invariant_measures} is incomplete.

This observation is also consistent with \cite[Theorem~6.2]{alvin-Cinic2026-Recurrence_symbolic_dynamics_and_wild_attractors}, which shows that a recurrent nonperiodic turning point with bounded kneading map cannot be persistently recurrent. Since Bl\'e proves in \cite[Proposition~5.5]{Ble_2204_external_arguments_and_invariant_measures} that the kneading map of these quadratic representatives satisfies $Q(k)\leq1$, they belong to this long-branched regime.


\bibliographystyle{alpha}
\bibliography{Biblio}
\end{document}